\DeclarePairedDelimiter\abs{\lvert}{\rvert}
\DeclarePairedDelimiter\norm{\lVert}{\rVert}
\newcommand{\R}{\mathbb{R}}
\newcommand{\C}{\mathbb{C}}
\newcommand{\N}{\mathbb{N}}
\newcommand{\Z}{\mathbb{Z}}
\newcommand{\calD}{\mathcal{D}}
\newcommand{\calO}{\mathcal{O}}
\newcommand{\calQ}{\mathcal{Q}}
\newcommand{\F}{\mathcal{F}}
\newcommand{\Q}{\mathcal{Q}}
\newcommand{\Co}{\mathrm{Co}}
\newcommand{\GL}{\mathrm{GL}}
\newcommand{\dd}{\mathrm{d}}
\newcommand{\inv}{{-1}}
\newcommand{\invT}{{-T}}
\newcommand{\what}{\widehat}
\begin{document}

\markboth{F\"uhr, Koch}{EMBEDDINGS OF SHEARLET COORBIT SPACES INTO SOBOLEV SPACES}

\catchline{}{}{}{}{}

\title{EMBEDDINGS OF SHEARLET COORBIT SPACES INTO SOBOLEV SPACES}

\author{Hartmut F\"uhr}

\address{Lehrstuhl A f\"ur Mathematik,\\
RWTH Aachen, 52056 Aachen, Germany\\
fuehr@matha.rwth-aachen.de}

\author{Ren\'e Koch}

\address{Lehrstuhl A f\"ur Mathematik,\\
RWTH Aachen, 52056 Aachen, Germany\\
rene.koch@matha.rwth-aachen.de}

\maketitle

\begin{history}
\received{(Day Month Year)}
\revised{(Day Month Year)}
\accepted{(Day Month Year)}
\published{(Day Month Year)}
\end{history}

\begin{abstract}
We investigate the existence of embeddings of shearlet coorbit spaces associated to weighted mixed $L^p$-spaces into classical Sobolev spaces in dimension three by using the description of coorbit spaces as decomposition spaces. This different perspective on these spaces enables the application of embedding results that allow the complete characterization of embeddings for certain integrability exponents, and thus provides access to a deeper understanding of the smoothness properties of coorbit spaces, and of the influence of the choice of shearlet groups on these properties. We give a detailed analysis, identifying which features of the dilation groups have an influence on the embedding behavior, and which do not. Our results also allow to comment on the validity of the interpretation of shearlet coorbit spaces as smoothness spaces.
\end{abstract}

\keywords{Shearlet coorbit spaces, embeddings, Sobolev spaces, decomposition spaces}

\ccode{AMS Subject Classification: }

\section{Introduction}



This paper is a study of approximation-theoretic properties of generalized wavelet systems arising from the action of certain matrix groups by dilation, combined with arbitrary translations. Starting with the paper by Murenzi \cite{Mu}, soon after generalized by Bernier and Taylor \cite{BeTa}, it was realized that the theory of square-integrable group representations provides access to a large variety of possible wavelet constructions, see e.g. Refs. \refcite{MR1419174,MR1633179}. An important addition to this class were the shearlets introduced for dimension two in Ref. \refcite{MR2543193}, and for higher dimensions in Ref. \refcite{MR2643586}. 
It was later realized that in dimensions $\ge 3$, several distinct choices of shearing operations can be employed, by introducing the Toeplitz shearlet construction \cite{DahHaeuTesCooSpaTheForTheToeSheTra}. A general scheme for the construction of shearlet dilation groups, which leads to a vast choice of different groups in higher dimensions, was then developed in Refs. \refcite{AlbertiEtAl2017,FuRe}. 

The interest in shearlets comes from the fact that the combination of anisotropic scaling and shearing results in a system of functions that is better equipped for the resolution of oriented singularities such as edges in images. This statement can be formalized by showing that the wavefront set of a signal can be characterized in terms of shearlet coefficients, which was first shown for two-dimensional wavelets in Ref. \refcite{KuLa}, and later extended to more general shearlet groups in arbitrary dimensions \cite{AlbertiEtAl2017}.

An alternative way of understanding how coefficient decay and smoothness of the analyzed signal are connected uses the theory of coorbit spaces. These spaces, introduced by Feichtinger and Gr\"ochenig \cite{FeiGroeI,FeiGroeII}, are based on the idea of introducing norms that quantify coefficient decay of a signal $f$ with respect to a given generalized wavelet system, and it is known that this theory applies to shearlet dilation groups in arbitrary dimensions \cite{FuRe}. Hence, each of these groups induces its own scale of coorbit spaces, defined in terms of the speed of coefficient decay. In view of the large pool of possible choices of such groups, this raises the question of analyzing and understanding coorbit spaces associated to a given shearlet dilation group, or more pointedly, understanding the influence that the choice of shearlet dilation group has on its scale of coorbit spaces. This paper can be seen as a case study for such an endeavor: We consider two families of shearlet dilation groups in 
dimension three, and analyze in a systematical fashion how coorbit spaces associated to weighted mixed $L^p$-spaces over these groups embed into Sobolev spaces. This question is interesting for several reasons. The first reason originates from the interpretation of coorbit spaces as smoothness spaces, as done, e.g., for shearlets in the introduction of Ref. \refcite{MR2643586}. This point of view seems natural given the fact that, for all shearlet dilation groups, there exist shearlet systems consisting of compactly supported, smooth functions \cite{MR2643586,FuRe}. Hence, one might expect that the elements of a coorbit space requiring fast decay of the coefficients inherit fast decay and smoothness properties from the elements of the shearlet system that efficiently approximate them. Clearly, studying the embedding behaviour of coorbit spaces into Sobolev spaces is one way of putting this general intuition to the test, and our analysis will reveal the extent to which it is justified, and how different 
features of the dilation groups influence its validity. 

Furthermore, the project of understanding the relationship between shearlet coorbit spaces and classical smoothness spaces is also motivated by work analyzing Fourier integral or pseudo-differential operators using shearlets \cite{MR2314296,MR3633063}, with a view to characterizing the mapping properties of these operators on the various function spaces. The embeddings of the type studied here fit well into this general endeavor. 

The last source of motivation that we want to mention comes from the method of proof, which largely relies on the machinery of decomposition spaces. These spaces were first introduced by Feichtinger and Gr\"obner \cite{FeichtingerGroebnerBanachSpacesOfDistributionsI} as a unified approach to both Besov and modulation spaces, with the scale of $\alpha$-modulation spaces as intermediate construction. Decomposition space applications and techniques were later extended by Borup and Nielsen \cite{MR2296727}, who pointed out (among other things) that curvelets could also be included in this setting. These ideas were further developed by the work of Voigtlaender, who introduced a powerful embedding theory between decomposition spaces of different kinds \cite{Voigtlaender2015PHD}, and of decomposition spaces into Sobolev spaces \cite{VoigtlaenderEmbeddingsOfDecSpInSobolebAndBVSPaces}. The scope of these results is truly remarkable: Among the function spaces that have a decomposition space description 
are (homogeneous) isotropic Besov spaces, or more generally, $\alpha$-modulation spaces  \cite{FeichtingerGroebnerBanachSpacesOfDistributionsI}, inhomogeneous Besov spaces \cite{Voigtlaender2015PHD} and anisotropic Besov spaces (both homogeneous and inhomogeneous) \cite{CheshmavarFuehr}. Another class of examples, which is of particular relevance for this paper, are the coorbit spaces associated to general dilation groups, introduced in full generality in Ref. \refcite{FuehCooSpaAndWavCoeDecOveGenDilGro}, and identified as decomposition spaces in Refs. \refcite{MR3345605,Voigtlaender2015PHD}. In particular, all of the previously mentioned shearlet coorbit spaces fall in this category. 

Hence the embedding theory developed in  Ref. \refcite{VoigtlaenderEmbeddingsOfDecSpInSobolebAndBVSPaces} is applicable to our problem, and our paper is both a sample application of the methods developed in the cited paper, and an illustration of the remarkable power of these methods. Prior to the work of Voigtlaender, an analysis of comparable depth and scope had simply been out of reach. 

\subsection*{Overview and summary of the paper}

The paper is structured as follows: Sections \ref{sect:WT} through \ref{sect:Embeddings} introduce the objects and results necessary to formulate and prove our main result. The class of shearlet transforms that we study is introduced in Section \ref{sect:WT}. Coorbit spaces, and their decomposition space description, are introduced and explained in Section \ref{sect:CoorbitAndDecompositionSpaces}. Voigtlaender's embedding result is then formulated in Section \ref{sect:Embeddings}. Theorem \ref{thm:SobolevEmbedding} shows that a decomposition space of the type $\mathcal{D}(\mathcal{P},L^p,\ell^q_v)$ embeds into a Sobolev space if a certain sequence, that is explicitly derived from the data $\mathcal{P},p,q,v$ entering the definition of the decomposition space, is summable in a suitable sense. For summability indices $p,q \le 2$, this statement in fact becomes an equivalence. The systematic application of this result to the setting of shearlet coorbit spaces is then the subject of Section 
\ref{section:Embeddings_shearlets}. 
The main technical results of our paper are the Theorems \ref{Cor:EmbeddingStandardLong}, containing a precise and exhaustive 
characterization of embeddings into Sobolev spaces for coorbit spaces associated to the standard shearlet groups in dimension three, and Theorem \ref{cor:ResultatToeplitz1}, which formulates an analog for Toeplitz shearlet groups. For two-dimensional shearlet groups, this analysis had been performed in Ref. \refcite{VoigtlaenderEmbeddingsOfDecSpInSobolebAndBVSPaces}, but for the three-dimensional cases, the results are completely new. They are also substantially more complicated than those for the two-dimensional case, due to the additional parameter describing the anisotropic scaling in the third dimension, and the additional variation in the shearing subgroup of the Toeplitz shearlet group. 

In particular, while the Theorems \ref{Cor:EmbeddingStandardLong} and \ref{cor:ResultatToeplitz1} contain essentially complete information, the interpretation of these results, say with a view to investigating the smoothness space interpretation of coorbit spaces, and the influence of the dilation group on this issue, becomes a separate nontrivial problem, which we address in Section \ref{Consequences}. We restrict the discussion to certain pertinent subcases, and analyze more closely the influence of the different components of the shearlet dilation group on the embedding behavior. Specifically, we investigate the role of the exponents describing the scaling subgroup on the one hand, and the choice of shearing subgroup on the other.  

This allows to draw the following conclusions: For the important subcase of coorbit spaces of the kind $Co(L^p)$, where $0 < p < 2$, {\em there exists no embedding into Sobolev spaces with nontrivial smoothness exponent $k>0$}, regardless of the choice of shearlet dilation group; see Corollary \ref{cor:no_embedding}. Elements of these spaces can be understood as functions in $L^2$ with a non-trivial approximation rate -- in the $L^2$-norm -- with respect to any discrete shearlet system obtained by discretizing the continuous shearlet transform; see e.g. the discussion in Ref. \refcite[Section 1.1]{MR3452925}. Our observation makes clear that this type of decay alone does not guarantee smoothness. We then resort to (mostly) analyzing embeddings of $Co(L^p_v)$ for suitable weights, and $0<p<2$. Here, it turns out that the embedding behavior depends on certain features of the shearlet dilation groups, and is independent of others. More precisely, the {\em shearing subgroup} has no influence 
(Corollary \ref{Cor:EmbeddingBehavior1}), whereas the {\em scaling subgroup} is critically influential (Theorem \ref{Cor:SameEmbeddingStandard}). The fact that the shearing subgroup has no influence is interesting because the coorbit spaces associated to different shearing subgroups do {\em not} coincide, by the results in Ref. \refcite{RKDoktorarbeit}. Thus the embedding behaviour into Sobolev spaces does not allow to distinguish different scales of shearlet coorbit spaces. This fact could probably have been expected, but it has been brought to light and rigourously established by our analysis. 

As a further byproduct of our analysis, we obtain that {\em anisotropic} scaling is required to guarantee the existence of embeddings into Sobolev spaces with nontrivial smoothness parameter. This means that using multiples of the identity operator as the scaling subgroup never works, see Corollary \ref{cor:embedding_2} and the following Remark \ref{rem:embedding_2} (2). On the one hand, this observation is slightly surprising, as the target spaces $W^{k,p}$ of the embedding results exhibit no anisotropies. On the other hand, it is well in line with the fact that anisotropy is needed for the study of singularities, such as the wavefront set, via the decay behaviour of shearlet transforms. Here, anisotropic scaling is generally necessary \cite{MR3547710}, and -- with additional restrictions -- also sufficient \cite{AlbertiEtAl2017}. 

Finally, we study for which groups the smoothness of elements of coorbit spaces improve as the decay requirements imposed by the coorbit spaces become more restrictive. This can be done by asking how the best possible parameter $k$ in the embedding $Co(L_v^p) \hookrightarrow W^{k,q}$ scales as $p$ goes to zero. Again, our results show that this can be attributed to properties of the scaling subgroup alone; see Corollary \ref{cor:embedding_3} and the subsequent Remark. 

\section{Generalized Wavelet Transform and Shearlet Groups}\label{section:ShearletGroups}

\label{sect:WT}
In this section, we recall basic definitions underlying the continuous wavelet transform and generalized shearlet dilation groups.

\subsection{Generalized Wavelet Transform}
For a closed matrix group $H\leq \GL(\R^d)$, which we also call \textit{dilation group} in the following, we define the group $G:=\R^d\rtimes H$ generated by dilations with elements of $H$ and translations with the group law $(x,h)\circ (y,g) := (x+hy, hg)$. We denote integration with respect to a left Haar measure on $H$ with $\dd h$, the associated left Haar measure on $G$ is then given by $d(x,h)=\abs{\det h}^{-1}\dd x \dd h$. The Lebesgue spaces on $G$ are always defined through integration with respect to a Haar measure. The group $G$ acts on the space $\mathrm{L}^2(\R^d)$ through the \textit{quasi-regular representation} $\pi$ defined by $[\pi(x,h)f](y):=\abs{\det h}^{-1/2} f(h^\inv (y-x))$ for $f\in \mathrm{L}^2(\R^d)$. The \textit{generalized continuous wavelet transform (with respect to $\psi\in \mathrm{L^2(\R^d)}$)} of $f$ is then given as the function
$W_\psi f:G \to \C: (x,h)\mapsto \braket{\psi, \pi(x,h)f}.$ Important properties of the map $W_\psi:f\mapsto W_\psi f$ depend on $H$ and the chosen $\psi$. If the quasi-regular representation is \textit{square-integrable}, which means that there exists a $0\neq \psi$ with $W_\psi \psi \in \mathrm{L}^2(G)$, and irreducible, then we call $H$ \textit{admissible} and the map $W_\psi:\mathrm{L}^2(\R^d) \to \mathrm{L}^2(G)$ is a multiple of an isometry, which gives rise to the (weak-sense)
    inversion formula
    \begin{equation} \label{eqn:waverec}
     f = \frac{1}{C_\psi}\int_G W_\psi f(x,h) \pi(x,h) \psi \dd(x,h) ~,
    \end{equation}
i.e., each $f \in \mathrm{L}^2(\mathbb{R}^d)$  is a continuous superposition of the wavelet system. According to results in Refs. \refcite{MR1419174}, \refcite{MR2652610}, the admissibility of $H$ can be characterized by the \textit{dual action} defined by $G\mapsto \R^d, (\xi,h)\mapsto p_\xi(h):=h^\invT \xi$, where $p_\xi$ denotes the associated orbit map. In fact, $H$ is admissible iff the dual action has a single open orbit $\mathcal{O}:=H^\invT \xi_0\subset \R^d$ of full measure for some $\xi_0\in \R^d$ and additionally the isotropy group $H_{\xi_0}:=\Set{h :p_{\xi_0}(h)=\xi_0}$ is compact.

\subsection{Shearlet Groups}

In order to state the definition given in Ref. \refcite{AlbertiEtAl2017}, we use the notation of $\mathfrak{gl}(\R^d)$ for the set of all $d \times d$-matrices and  let 
$\exp: \mathfrak{gl}(\R^d) \to \mathrm{GL}(\R^d)$
be the exponential map defined by the  series
$\exp(A) := \sum_{k=0}^{\infty}\frac{A^k}{k!}$
for every $A\in\mathfrak{gl}(\R^d)$. We consider convergence of this series with respect to the norm $\norm{A}_{\mathrm{op}}:=\sup_{\abs{x}\leq 1}\abs{Ax}$. Furthermore, we denote with $T(\R^d)$ the set of upper triangular $d\times d$-matrices with one on their diagonals.

\begin{definition}[Ref. \refcite{AlbertiEtAl2017} Definition 1.]
Let $H\subset \mathrm{GL}(\R^d)$ be a closed, admissible dilation group. The group $H$ is called \textit{generalized shearlet dilation group}\index{generalized shearlet dilation group} if there exist two closed subgroups 
$S, D \subset \mathrm{GL}(\R^d)$
such that
\begin{enumerate}[i)]
\item $S$ is a connected abelian subgroup of $T(\R^d)$,
\item $D=\Set{\exp(rY) | r\in \R}$ is a one-parameter group, where $Y\in \mathfrak{gl}(\R^d)$ is a diagonal matrix and
\item every $h\in H$ has a unique representation as $h=\pm ds$ for some $d\in D$ and $s\in S$.
\end{enumerate}
$S$ is called the \textit{shearing subgroup of $H$}\index{shearing subgroup}, $D$ is called the \textit{scaling subgroup of $H$}\index{scaling subgroup}, and $Y$ is called the \textit{infinitesimal generator of $D$}.
\end{definition}

We denote the canonical basis of $\R^d$ with $e_1,\ldots, e_d$ and the identity matrix in $\mathrm{GL}(\R^d)$ with $E_d$ or just $E$ if the dimension is clear from the context. The next result contains information about the structure of shearing subgroups. All generalized shearlet dilation groups in dimension $d$ share the same open dual orbit and isotropy group.

\begin{lemma}[Ref. \refcite{AlbertiEtAl2017} Proposition 11.]\label{lem:OrbitShearletGroups}
For a generalized shearlet dilation group $H$, the unique open dual orbit of $H$ is given by $\mathcal{O}=\R^* \times \R^{d-1}$ and the isotropy group of $\xi\in \mathcal{O} $ with respect to the dual action is given by $H_\xi=\Set{E_d}$.
\end{lemma}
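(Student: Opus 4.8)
The idea is to let the first coordinate do the bookkeeping and to reduce the whole statement to a single transitivity property of the shearing subgroup. I begin by rewriting the dual action through the decomposition $h=\pm ds$: since transposition reverses products and $d$ is diagonal (so $d^\invT=d^\inv$), one has $h^\invT=\pm d^\inv s^\invT$. As $s\in T(\R^d)$ is upper triangular with unit diagonal, $s^\invT$ is lower triangular with unit diagonal, so its first row is $e_1^\top$ and therefore $(s^\invT\xi)_1=\xi_1$ for all $\xi$. Consequently
\[
(h^\invT\xi)_1=\pm(d^\inv)_{11}\,\xi_1=\pm \exp(-rY_{11})\,\xi_1 ,
\]
so that the hyperplane $\{\xi_1=0\}$ and the open half-spaces $\{\xi_1>0\},\{\xi_1<0\}$ are invariant, with the sign swapping the two half-spaces. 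Two consequences are immediate: the unique open orbit $\mathcal{O}$ (which exists, of full measure and with compact stabilizers, because $H$ is admissible by definition) cannot meet the null set $\{\xi_1=0\}$, so $\mathcal{O}\subseteq\R^*\times\R^{d-1}$; and $Y_{11}\neq0$, for otherwise $(h^\invT\xi)_1=\pm\xi_1$ would confine every orbit to two hyperplanes, contradicting full measure.

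Next I reduce the lemma to one structural fact about the shearing subgroup, namely that $S$ acts, via $s\mapsto s^\invT$, simply transitively on the affine hyperplane $\{\xi_1=1\}$; equivalently, that the map $\phi\colon S\to\R^{d-1}$, $\phi(s):=\big((s^\invT e_1)_2,\dots,(s^\invT e_1)_d\big)$, is a bijection. Granting this together with $Y_{11}\neq0$, transitivity of the dual action on $\R^*\times\R^{d-1}$ follows by coordinate bookkeeping: given a target $\eta$ with $\eta_1\neq0$, the sign matches $\operatorname{sgn}(\eta_1)$, the scaling parameter $r$ (available since $Y_{11}\neq0$) matches $\abs{\eta_1}$, and $\phi$ then supplies the remaining $d-1$ coordinates. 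This shows $\R^*\times\R^{d-1}$ is a single orbit; being open and of full measure, it must be the unique open orbit $\mathcal{O}$.

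The step I expect to be the main obstacle is establishing this simple transitivity of $S$ on $\{\xi_1=1\}$. Infinitesimally it is forced by admissibility: at a point $\eta_0\in\mathcal{O}$ (normalized to have unit first coordinate by the scaling just discussed) openness makes the orbit map $h\mapsto h^\invT\eta_0$ a submersion, and since the scaling direction is the only one touching the first coordinate, the shearing subalgebra must already sweep out the full hyperplane direction $\{0\}\times\R^{d-1}$; compactness of the stabilizer rules out a nontrivial unipotent one-parameter subgroup fixing $\eta_0$, which makes this sweep injective. Hence the relevant linear map $\mathfrak{s}\to\R^{d-1}$ is an isomorphism and $\dim\mathfrak{s}=d-1$. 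Upgrading this infinitesimal statement to a \emph{global} bijection of $\phi$ is the delicate point: here I would use that $S$ is connected, abelian and unipotent, so that $\exp\colon\mathfrak{s}\to S$ is a diffeomorphism and $\phi\circ\exp$ is a polynomial map whose linear part is the isomorphism just found and whose higher-order terms are strictly triangular with respect to the coordinate ordering; such a map is inverted by successive substitution. This is the one place where the fine structure of shearing subgroups, as developed in Ref.~\refcite{AlbertiEtAl2017}, genuinely enters.

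It remains to record the triviality of the stabilizer, which falls out of the same bookkeeping. Since $e_1\in\mathcal{O}$ and the stabilizers along an orbit are mutually conjugate, it suffices to treat $\xi=e_1$. If $h^\invT e_1=e_1$ with $h=\pm ds$, comparing first coordinates gives $\pm\exp(-rY_{11})=1$, so the sign is $+$ and, as $Y_{11}\neq0$, $r=0$ and $d=E_d$. The relation collapses to $s^\invT e_1=e_1$, i.e.\ $\phi(s)=0$, whence $s=E_d$ by injectivity of $\phi$ and $h=E_d$. Thus $H_\xi=\{E_d\}$ for every $\xi\in\mathcal{O}$, completing the identification of $\mathcal{O}$ together with its trivial isotropy.
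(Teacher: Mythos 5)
First, a remark on the comparison itself: the paper contains no proof of Lemma~\ref{lem:OrbitShearletGroups}; it is imported verbatim from Ref.~\refcite{AlbertiEtAl2017}, Proposition~11. So your proposal must stand on its own. Much of it does: the first-coordinate bookkeeping ($h^{-T}=\pm d^{-1}s^{-T}$, invariance of $\{\xi_1=0\}$, necessity of $Y_{11}\neq 0$), the reduction of both the orbit claim and the isotropy claim to bijectivity of $\phi\colon S\to\R^{d-1}$, the closing stabilizer computation given that bijectivity, and the infinitesimal argument (openness of the orbit makes the orbit map a submersion, compactness of stabilizers kills the kernel, hence $X\mapsto -X^{T}\eta_0$ is an isomorphism of $\mathfrak{s}$ onto $\{0\}\times\R^{d-1}$ and $\dim S=d-1$) are all correct, and the overall reduction parallels how the cited source argues.

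The genuine gap is the globalization step, and it is twofold. First, a base-point mismatch: your isomorphism lives at an unknown point $\eta_0=(1,\tilde\eta_0)\in\mathcal{O}$, whereas the linear part of $\phi\circ\exp$ at $0$ is $X\mapsto$ (last $d-1$ entries of) $-X^{T}e_1$, i.e.\ the map reading off the first row of $X\in\mathfrak{s}$. These are different linear maps, and you cannot substitute $e_1$ for $\eta_0$, because $e_1\in\mathcal{O}$ is part of what is being proved; injectivity of the first-row map on $\mathfrak{s}$ is precisely the structural fact about shearing subalgebras that Ref.~\refcite{AlbertiEtAl2017} establishes separately, not a formal consequence of injectivity at $\eta_0$. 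Second, the asserted triangularity of the higher-order terms of $\phi\circ\exp$ is unjustified: coordinate $i$ of $\exp(-X^{T})e_1$ is a sum of chain products $X_{1j_1}X_{j_1j_2}\cdots X_{j_{k-1}i}$ with $1<j_1<\cdots<j_{k-1}<i$, and the factors not lying in the first row, rewritten through any linear parametrization of $\mathfrak{s}$, may a priori involve all parameters, not only those of index smaller than $i$. Making this triangular is exactly what the canonical-generator structure theory of shearing subalgebras provides (for instance, that $\R E_d\oplus\mathfrak{s}$ is a commutative associative matrix algebra, whence $S=E_d+\mathfrak{s}$ and $S^{-T}e_1=\{1\}\times\R^{d-1}$ follows at once). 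So the step you flag as the place where ``the fine structure genuinely enters'' is not a deferrable technicality; it is the core of the lemma, and your argument does not supply it. If you want to avoid the structure theory, a workable repair is: your infinitesimal argument applies at every point of the $S$-orbit of $\eta_0$, so that orbit is open in the affine hyperplane $\{\xi_1=1\}$; orbits of unipotent real algebraic groups acting algebraically on affine space are closed (Kostant--Rosenlicht/Birkes), so by connectedness the $S$-orbit is the whole hyperplane, giving $e_1\in\mathcal{O}$ and surjectivity of $\phi$; injectivity then follows because the stabilizer of $e_1$ in $S$ is a compact subgroup of a unipotent group, hence trivial. With that inserted, the rest of your proof goes through as written.
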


Now, we introduce some concrete classes of shearlet groups, which we will further investigate in the next sections. The class of standard shearlet groups
\begin{align*}
H^{\lambda_1, \lambda_2} 
&:=
\Set{\epsilon
\begin{pmatrix}
a &	 ab 			& ac 			\\
0 &	 a^{\lambda_1} 	& 0		\\
0 &	  0	& a^{\lambda_2}
\end{pmatrix}
|
\begin{array}{l}
a>0, \\
b, c\in \R,\\
\epsilon\in \Set{\pm 1}
\end{array}
}
\shortintertext{for $\lambda_1, \lambda_2 \in \R$ and the class of Toeplitz shearlet groups}
H^\delta
&:=
\Set{\epsilon
\begin{pmatrix}
a &	 ab 			& ac 			\\
0 &	 a^{1-\delta} 	& a^{1-\delta}b		\\
0 &	  0	& a^{1-2\delta}
\end{pmatrix}
|
\begin{array}{l}
a>0,\\
b, c\in \R,\\
\epsilon\in \Set{\pm 1}
\end{array}
}
\end{align*}
for $\delta\in \R$. 

\begin{remark}
In dimension three, these are the only possible generalized shearlet dilation groups (see Ref. \refcite{AlbertiEtAl2017} remark 19).
\end{remark}

\section{Coorbit Spaces and Decompositions Spaces}\label{sect:CoorbitAndDecompositionSpaces}

Coorbit spaces are defined in terms of decay behavior of generalized wavelet transforms. 
    To give a precise definition, we introduce weighted mixed $\mathrm{L}^p$-spaces on $G$, denoted by $\mathrm{L}^{p,q}_v(G)$ . By definition, this space is the set of functions
    \begin{align*}
        \left\{ f:G\to \mathbb{C} : \int_H\left( \int_{\mathbb{R}^d} \left| f(x,h) \right|^p v(x,h)^p \dd x \right)^{q/p}\frac{\dd h}{|\det(h)|} <\infty \right\},
    \end{align*}
    with natural (quasi-)norm $\Vert \cdot\Vert_{\mathrm{L}^{p,q}_v}$. This definition is valid for $0< p,q <\infty$, for $p=\infty$ or $q=\infty$ the essential supremum has to be taken at the appropriate place instead. The function $v:G\to \mathbb{R}^{>0}$ is a weight function that fulfills the condition $v(ghk)\leq v_0(g)v(h)v_0(k)$ for some submultiplicative weight $v_0$. If the last condition is satisfied, we call $v$ left- and right moderate with respect to $v_0$. Thus, the expression $\Vert W_\psi f\Vert_{\mathrm{L}^{p,q}_v}$ can be read as a measure of wavelet coefficient decay of $f$. We will exclusively consider weights which only depend on $H$.  The coorbit space $\mathrm{Co}\left(\mathrm{L}^{p,q}_v(\mathbb{R}^d\rtimes H)\right)$ is then defined as the space
    \begin{align}\label{def:Coorbit}
        \left\{ f\in \mathcal{(H}^1_w)^\neg : W_\psi f \in W(\mathrm{L}^{p,q}_v(G))\right\}
    \end{align}
    for some suitable wavelet $\psi$ and some control weight $w$ associated to $v$. The space $(\mathcal{H}^1_w)^\neg$ denotes the space of antilinear functionals on 
$
        \mathcal{H}^1_w :=\left\{ f\in \mathrm{L}^2(\mathbb{R}^d): W_\psi f \in \mathrm{L}^1_w(G)\right\}
$
    and $W(Y)$ for a function space $Y$ on $G$ denotes the Wiener amalgam space defined by
$
    W_Q(Y):=\{f\in \mathrm{L}^\infty_{\text{loc}}(G) | M_Qf\in Y\}
 $
    with quasi-norm $\|f\|_{W_Q(Y)}:=\|M_Qf\|_Y$ for $f\in  W_Q(Y)$, where the \textit{maximal function} $M_Qf$ for some suitable unit neighborhood $Q\subset G$ is $
    M_Qf:G\to [0,\infty],\ x\mapsto \operatorname*{ess\ sup}_{y\in xQ}|f(y)|.$
   
   The appearance of the Wiener amalgam space in (\ref{def:Coorbit}) is necessary to guarantee consistently defined quasi-Banach spaces in the case $\{p,q\}\cap (0,1)\neq \emptyset$, see Ref. \refcite{RauCooSpaTheForQuaBanSpa} and Ref. \refcite{Voigtlaender2015PHD}. In the classical coorbit theory for Banach spaces, which was developed in Refs. \refcite{FeiGroeI}, \refcite{FeiGroeII}, the Wiener amalgam space can be replaced by the simpler space $\mathrm{L}^{p,q}_v(G)$, see  Ref. \refcite{RauCooSpaTheForQuaBanSpa}.
   
  Many useful properties of these spaces are known and hold in the quasi-Banach space case as well as in the Banach space case. The most prominent examples of coorbit spaces associated to generalized wavelet transforms are the homogeneous Besov spaces and the modulation spaces. However, each shearlet group gives rise to its scale of coorbit spaces, as well; see Refs.  \refcite{DahHaeuTesCooSpaTheForTheToeSheTra}, \refcite{FuehCooSpaAndWavCoeDecOveGenDilGro}, \refcite{KLShearlets}.

    The starting point for the definition of decomposition spaces is the notion of an {\em admissible covering} $\mathcal{Q}=(Q_i)_{i\in I}$ of some open set $\mathcal{O} \subset \mathbb{R}^d$ (Ref. \refcite{FeichtingerGroebnerBanachSpacesOfDistributionsI}) which is a family of nonempty sets such that
    \begin{enumerate}[i)]
        \item $\bigcup_{i\in I} Q_i = \mathcal{O}$ and
        \item $\sup_{i\in I} |\{j\in I: Q_i \cap Q_j \neq \emptyset\}|<\infty$.
    \end{enumerate} 
    The main tool for the localization is a special partition of unity $\Phi=(\varphi_i)_{i\in I}$ subordinate to $\mathcal{Q}$, also called $\mathrm{L}^p$-BAPU (bounded admissible partition of unity), with the following properties
    \begin{enumerate}[i)]
        \item $\varphi_i \in C_c^\infty(\mathcal{O})\quad \forall i\in I$,
        \item $\sum_{i\in I} \varphi_i(x)=1 \quad \forall x\in \mathcal{O}$,
        \item $\varphi_i(x)=0$ for $x\in \mathbb{R}^d\setminus Q_i$ and $i\in I$,
        \item if $1\leq p \leq \infty$: $\sup_{i\in I}\Vert \mathcal{F}^{-1} \varphi_i\Vert_{\mathrm{L}^1}<\infty$\\
                if $0<p<1$:\quad $\sup_{i\in I}|\det(T_i)|^{\frac{1}{p}-1}\Vert \mathcal{F}^{-1} \varphi_i\Vert_{\mathrm{L}^p}<\infty$,
    \end{enumerate}
    where we have to further assume in the case $0<p<1$ that the covering $\mathcal{Q}$ has the structure
    $Q_i = T_i Q + b_i$ with $T_i\in \mathrm{GL}(\mathbb{R}^d)$, $b_i\in \mathbb{R}^d$ and an open, precompact set $Q$ ($\mathcal{Q}$ is then called a {\em structured admissible covering}). The definition of decomposition spaces requires one last ingredient, namely a weight  $(u_i)_{i\in I}$ such that there exists $C>0$ with $u_i \leq C u_j$ for all $i,j \in I: Q_i \cap Q_j \neq \emptyset$, a weight with this property is also called {\em $\mathcal{Q}$-moderate}. The interpretation of this property is that the value of $(u_i)_{i\in I}$ is comparable for indices corresponding to sets which are "close" to each other. Finally, we define the decomposition space with respect to the covering $\mathcal{Q}$ and the weight $(u_i)_{i \in I}$ with integrability exponents $0<p,q\leq \infty$ as
    \begin{align}
        \mathcal{D}(\mathcal{Q}, \mathrm{L}^p, \ell^q_u):=\{f\in \mathcal{D}'(\mathcal{O}): \Vert f\Vert_{\mathcal{D}(\mathcal{Q}, \mathrm{L}^p, \ell^q_u)}< \infty\}
    \end{align}
    for 
    \begin{align}
        \Vert f\Vert_{\mathcal{D}(\mathcal{Q}, \mathrm{L}^p, \ell^q_u)}:=
        \left\Vert\left(u_i \cdot \Vert \mathcal{F}^{-1}(\varphi_i f) \Vert_{\mathrm{L}^p(\mathbb{R}^d)} \right)_{i\in I}\right\Vert_{\ell^q(I)}.
    \end{align}
    As the notation suggests, the decomposition spaces are independent of the precise choice of $\Phi$  (Ref. \refcite{Voigtlaender2015PHD} Corollary 3.4.11). 

In order to describe coorbit spaces as decomposition spaces, we need to associate a covering of the frequencies to a given dilation group. This is done using the {\em dual action} $H \times \mathbb{R}^d \ni (h, \xi) \mapsto h^{-T} \xi$.

We then pick a {\em well-spread} family in $H$, i.e. a family of elements $(h_i)_{i\in I}$ with the properties
    \begin{enumerate}[i)]
        \item there exists a relatively compact neighborhood $U\subset H$ of the identity such that $\bigcup_{i\in I}h_i U= H$ -- we say $(h_i)_{i\in I}$ is \textit{$U$-dense} in this case -- and
        \item there exists a neighborhood $V\subset H$ of the identity such that $h_iV \cap h_jV = \emptyset$ for $i\neq j$ -- we say $(h_i)_{i\in I}$ is \textit{$V$-separated} in this case.
    \end{enumerate}
    The  \textit{dual covering induced by $H$} is then given by the family  $\mathcal{Q}=(Q_i)_{i\in I}$, where $Q_i = p_{\xi_0}(h_i U)$ for some $\xi_0$ with $H^{-T}\xi_0=\mathcal{O}$. It can be shown that well-spread families always exist, and that the induced covering is indeed an admissible covering in the sense of decomposition space theory, for which $\mathrm{L}^p$-BAPUs exist according to Ref. \refcite{Voigtlaender2015PHD}. Furthermore, there always exist induced coverings consisting of open and connected sets, an additional feature which can facilitate the investigations in some cases, see Ref. \refcite{RKDoktorarbeit} Corollary 2.5.9.
    
     There always exists a discretization of the weight $v$, which enables a decomposition space description of the coorbit space. 
     
\begin{definition}[Ref. \refcite{Voigtlaender2015PHD} Definition 4.5.3.]\label{def:DecompositionWeight}
For $q\in (0,\infty]$ and a weight $v:H\to (0,\infty)$, we define the weight
$
v^{(q)}:H\to (0,\infty),\ h\mapsto \abs{\det(h)}^{\frac{1}{2} - \frac{1}{q}} v(h).
$ Here, we set $\frac{1}{\infty}:=0$. 
\end{definition}

\begin{theorem}[Ref. \refcite{Voigtlaender2015PHD} Theorem 4.6.3]\label{thm: FourierIsoCoorbitDecSpaces}
Let $\mathcal{Q}$ be a covering of the dual orbit $\mathcal{O}$ induced by $H$, $0<p,q\leq\infty$ and $u=(u_i)_{i\in I}$ a suitable weight, then the Fourier transform
$
       \mathcal{F}: \mathrm{Co}\left(\mathrm{L}^{p,q}_v(\mathbb{R}^d\rtimes H)\right) \to \mathcal{D}(\mathcal{Q}, \mathrm{L}^p, \ell^q_u)
$
    is an isomorphism of (quasi-) Banach spaces. The weight $(u_i)_{i\in I}$ can be chosen as $u_i:=v^{(q)}(h_i)$, where $(h_i)_{i\in I}$ is the well-spread family used in the construction of $\mathcal{Q}$ and we call such a weight a $\mathcal{Q}-$discretization of $v$.
\end{theorem}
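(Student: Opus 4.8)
The statement is a norm-equivalence result dressed up as an isomorphism, so the plan is to take the isomorphism to be the Fourier transform itself and to prove, for every $f$ in the common reservoir, the two-sided estimate
\[
\norm{W_\psi f}_{W(\mathrm{L}^{p,q}_v(G))} \asymp \norm{\mathcal{F} f}_{\mathcal{D}(\mathcal{Q},\mathrm{L}^p,\ell^q_u)}, \qquad u_i := v^{(q)}(h_i).
\]
Because each of the two spaces consists, by definition, exactly of the elements of its reservoir with finite norm, this equivalence yields the claimed isomorphism, provided one separately checks that $\mathcal{F}$ is a bijection between the reservoirs: one identifies $(\mathcal{H}^1_w)^\neg$ with a space of distributions on which $\mathcal{F}$ acts and whose image is $\mathcal{D}'(\mathcal{O})$. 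First I would pass to the frequency side. A direct computation gives $\widehat{\pi(x,h)f}(\xi) = \abs{\det h}^{1/2} e^{-2\pi i \braket{x,\xi}} \hat f(h^T\xi)$, whence, by Plancherel,
\[
W_\psi f(x,h) = \abs{\det h}^{1/2}\int_{\R^d} \overline{\hat\psi(\xi)}\,\hat f(h^T\xi)\, e^{-2\pi i \braket{x,\xi}}\,\dd\xi.
\]
For fixed $h$ this exhibits $W_\psi f(\cdot,h)$ as an inverse-Fourier-type transform of $\hat f$ restricted to the frequency region $h^T\,\mathrm{supp}(\hat\psi)$ and modulated by a window built from $\overline{\hat\psi}$. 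As $h$ ranges over $H$, these regions sweep out the open orbit $\mathcal{O}$ in exactly the pattern recorded by the induced covering $\mathcal{Q}$ (up to the natural passage between $h_i$ and $h_i^{\inv}$, which preserves the well-spread property), so that the window attached to a given scale is supported in a fixed dilate of the covering element it sits over and is, up to uniformly controlled constants, comparable to the corresponding $\mathrm{L}^p$-BAPU member $\varphi_i$.

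The core of the argument is a discretization. I would invoke the sampling theory for quasi-Banach coorbit spaces to replace the continuous norm $\norm{W_\psi f}_{W(\mathrm{L}^{p,q}_v)}$ by a sequence norm assembled from the samples $W_\psi f(\cdot, h_i)$ along the well-spread family $(h_i)_{i\in I}$. This is precisely the step where the Wiener amalgam space in the definition of the coorbit space is indispensable: it is the device guaranteeing that the sampled coefficients dominate, and are dominated by, the continuous transform throughout the range $\{p,q\}\cap(0,1)\neq\emptyset$, where the convolution relations of classical Banach coorbit theory break down. Having sampled, I would rewrite the inner $\mathrm{L}^p(\R^d)$-norm of each $W_\psi f(\cdot,h_i)$ by the change of variables $x\mapsto h_i x$ and then compare the window $\overline{\hat\psi(h_i^\invT\cdot)}$ with $\varphi_i$; for $0<p<1$ this comparison rests on the $\abs{\det T_i}^{1/p-1}$-normalized $\mathrm{L}^p$-BAPU estimate together with the bounded overlap of $\mathcal{Q}$, and it identifies the inner norm, up to a power of $\abs{\det h_i}$, with $\norm{\mathcal{F}^{-1}(\varphi_i \hat f)}_{\mathrm{L}^p}$.

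The remaining, and most delicate, task is the bookkeeping of the determinant factors, and I expect it to be the main obstacle. Three sources of powers of $\abs{\det h_i}$ enter: the normalization $\abs{\det h}^{\pm 1/2}$ of the quasi-regular representation, the factor produced by rescaling the $\mathrm{L}^p$-norm in the $x$-variable, and the weight $\abs{\det h}^{-1}$ of the Haar measure governing the $\ell^q$-aggregation over scales. These must combine so that the per-index weight multiplying $\norm{\mathcal{F}^{-1}(\varphi_i\hat f)}_{\mathrm{L}^p}$ is exactly $\abs{\det h_i}^{1/2-1/q}\,v(h_i)=v^{(q)}(h_i)$, which is where the precise form of $v^{(q)}$ in Definition \ref{def:DecompositionWeight} is forced upon us. The subtle point is that the window-to-BAPU comparison contributes \emph{different} determinant powers in the regimes $0<p<1$ and $p\ge 1$, so one must verify that the exponents cancel consistently and yield the same weight in every regime; a naive Riemann-sum discretization of the continuous norm is already too crude here, and getting the sampling normalization right is the heart of the matter. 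Once the weighted sequence norm is shown to equal $\norm{\mathcal{F} f}_{\mathcal{D}(\mathcal{Q},\mathrm{L}^p,\ell^q_u)}$ with $u_i=v^{(q)}(h_i)$, the two-sided estimate is complete; the independence of the decomposition space from the choice of BAPU and of the well-spread family (equivalently, the $\mathcal{Q}$-moderateness of $u$) then ensures that the resulting isomorphism is well posed.
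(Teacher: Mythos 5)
The paper never proves this statement: it is imported verbatim, with proof deferred, to Ref.~\refcite{Voigtlaender2015PHD} (Theorem 4.6.3), so your proposal can only be measured against that external argument. Your skeleton does match its broad strategy --- frequency-side formula for the transform, semi-discretization of the $H$-integral along the well-spread family via the amalgam structure, window-to-BAPU comparison, determinant bookkeeping forcing $v^{(q)}$ --- but at the two places where the actual work sits, the proposal has genuine gaps rather than merely compressed detail.

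First, the reservoir step. The claim that $\mathcal{F}$ maps $(\mathcal{H}^1_w)^\neg$ bijectively onto $\mathcal{D}'(\mathcal{O})$ is not available and is false as stated: what one can prove is that $\mathcal{F}^\inv(C_c^\infty(\mathcal{O}))$ embeds continuously into $\mathcal{H}^1_w$, so that Fourier transforms of reservoir elements \emph{restrict} to distributions on $\mathcal{O}$; this restriction map is certainly not onto $\mathcal{D}'(\mathcal{O})$ (a distribution arising this way must be continuous for the $\mathcal{H}^1_w$-norm on test functions, which distributions of unbounded order are not), and its injectivity on the full reservoir would need a density argument you do not supply. Hence ``norm equivalence plus bijection of reservoirs'' is not a valid reduction; the real proof must establish injectivity on the coorbit space itself (via the reproducing formula) and, separately, surjectivity onto $\mathcal{D}(\mathcal{Q},\mathrm{L}^p,\ell^q_u)$: given $g$ with finite decomposition norm one must \emph{construct} $f\in(\mathcal{H}^1_w)^\neg$ with $\mathcal{F}f=g$ and $W_\psi f\in W(\mathrm{L}^{p,q}_v)$. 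That synthesis step, the hardest part of the theorem, is absent. Second, the determinant bookkeeping, which you yourself call the heart of the matter, is asserted rather than carried out, and under the conventions you adopt it does not close up to the stated weight. With the paper's transform $W_\psi f=\braket{\psi,\pi(\cdot,\cdot)f}$, the sample whose frequency content matches $Q_i=p_{\xi_0}(h_iU)$ sits at $h\approx h_i^\inv$, so your ``passage between $h_i$ and $h_i^\inv$'' forces the discretization of $\int_H(\cdots)\,\abs{\det h}^\inv\,\dd h$ to run over the sets $U^\inv h_i^\inv$. These are \emph{right} translates, whose left Haar measure is not constant but carries the modular function $\Delta_H(h_i)^\inv$ (shearlet dilation groups are non-unimodular), and the per-index weight that emerges is of the form $\abs{\det h_i}^{1/2-1/p+1/q}\,\Delta_H(h_i)^{-1/q}\,v(h_i^\inv)$: it involves $p$, the modular function, and $v$ at inverses, and is not $v^{(q)}(h_i)$. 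The clean outcome you assert only drops out under the convention $\braket{f,\pi(x,h)\psi}$ used in Ref.~\refcite{Voigtlaender2015PHD}, where the sample at $h_i$ is band-limited to $h_i^{-T}\operatorname{supp}\hat\psi\approx Q_i$ and the discretization runs over left translates $h_iU$ of constant Haar measure. Finally, a smaller but real point: the direction of the window-to-BAPU comparison that dominates $\norm{\mathcal{F}^\inv(\varphi_i\hat f)}_{\mathrm{L}^p}$ requires dividing by the window, hence an admissible vector with $\hat\psi\in C_c^\infty(\mathcal{O})$ bounded away from zero on suitable dilates of the covering sets, with $\abs{\det}^{1/p-1}$-normalized multiplier bounds when $p<1$; your appeal to generic ``sampling theory for quasi-Banach coorbit spaces'' neither supplies this vector nor avoids circularity, since those discretization theorems are of the same depth as the statement being proved.
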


In order to apply the embedding results in Ref. \refcite{VoigtlaenderEmbeddingsOfDecSpInSobolebAndBVSPaces}, we need explicitly given well-spread families in standard and Toeplitz shearlet groups. 

    \begin{lemma}\label{lem:StandardWellSpread}
Define
\begin{align*}
B^{\lambda_1, \lambda_2}_{n, m_1, m_2, \epsilon} 
:=
\epsilon
\begin{pmatrix}
2^n &	 m_1 2^n 			& m_2 2^n	\\
0 &	 2^{n\lambda_1} 	& 0		\\
0 &	  0	& 2^{n\lambda_2}
\end{pmatrix}
\in H^{\lambda_1, \lambda_2}
\end{align*}
for $n, m_1, m_2 \in \Z$ and $\epsilon\in \{\pm 1\}$. Then the family
$\Gamma^{\lambda_1, \lambda_2} := \left(B^{\lambda_1, \lambda_2}_{n, m_1, m_2, \epsilon} \right)_{(n,m_1,m_2,\epsilon)\in I}$ for $I:=\Z^3\times \{\pm 1\}$
is $U^{\lambda_1, \lambda_2}$ -dense and -separated, with
\begin{align*}
U^{\lambda_1, \lambda_2} &:= 
\Set{
\begin{pmatrix}
\alpha 	&	 \alpha\beta 			& \alpha\gamma 			\\
0 		&	 \alpha^{\lambda_1} 	& 0						\\
0 		&	  0						& \alpha^{\lambda_2}
\end{pmatrix}
|
\begin{array}{cll}
\frac{2}{3} &< \alpha &\leq\frac{4}{3}, \\
-\frac{\alpha^{\lambda_1}}{2} &< \alpha\beta &\leq \frac{\alpha^{\lambda_1}}{2}, \\
-\frac{\alpha^{\lambda_2}}{2} &< \alpha\gamma &\leq \frac{\alpha^{\lambda_2}}{2}
\end{array}
}
\subset H^{\lambda_1, \lambda_2}.\\
\hfill
\end{align*}
Furthermore, this well-spread family induces a covering $\mathcal{C}^{\lambda_1, \lambda_2}$ of the associated dual orbit. For later reference, we define $A^{\lambda_1, \lambda_2}_{n, m_1, m_2, \epsilon} 
:=\left(B^{\lambda_1, \lambda_2}_{n, m_1, m_2, \epsilon}\right)^{-T}$.
\end{lemma}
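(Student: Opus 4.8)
The plan is to establish something slightly stronger than $U$-density and $U$-separation at once, namely that the left translates $\bigl(B^{\lambda_1,\lambda_2}_{n,m_1,m_2,\epsilon}\,U^{\lambda_1,\lambda_2}\bigr)_{(n,m_1,m_2,\epsilon)\in I}$ form a \emph{partition} of $H^{\lambda_1,\lambda_2}$. Once this is shown, coverage gives $U$-density and pairwise disjointness gives $U$-separation, so that the single set $U^{\lambda_1,\lambda_2}$ serves simultaneously as the density and the separation neighborhood. Concretely, I would prove that for every $h\in H^{\lambda_1,\lambda_2}$ there is exactly one pair $(i,u)\in I\times U^{\lambda_1,\lambda_2}$ with $h=B^{\lambda_1,\lambda_2}_i\,u$.

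First I would carry out the matrix multiplication $B^{\lambda_1,\lambda_2}_{n,m_1,m_2,\epsilon}\,u$ for a generic $u\in U^{\lambda_1,\lambda_2}$ with parameters $(\alpha,\beta,\gamma)$. Since both factors are (signed) elements of the triangular shape defining $H^{\lambda_1,\lambda_2}$, the product has this shape again, with scaling parameter $2^n\alpha$, sign $\epsilon$, and off-diagonal entries governed by $\alpha\beta+m_1\alpha^{\lambda_1}$ and $\alpha\gamma+m_2\alpha^{\lambda_2}$; note in particular that the two diagonal entries $(2^n\alpha)^{\lambda_1}$ and $(2^n\alpha)^{\lambda_2}$ are automatically consistent once the top-left entry matches. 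Writing the target $h$ with parameters $(a,b,c)$ and sign $\epsilon_0$ and equating entries then reduces the existence-and-uniqueness claim to solving $a=2^n\alpha$, $\alpha b=\alpha\beta+m_1\alpha^{\lambda_1}$ and $\alpha c=\alpha\gamma+m_2\alpha^{\lambda_2}$, together with $\epsilon=\epsilon_0$.

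Then I would solve these in order. The sign is forced, $\epsilon=\epsilon_0$, because $U^{\lambda_1,\lambda_2}$ lies in the identity component (all its elements have sign $+1$). The equation $a=2^n\alpha$ under the constraint $\alpha\in(\tfrac23,\tfrac43]$ amounts to asking for the unique power of two in the half-open interval $[\tfrac34 a,\tfrac32 a)$; since the endpoints have ratio $2$, exactly one integer $n$ qualifies, which fixes $\alpha=a/2^n$. With $\alpha$ fixed, the constraint $-\alpha^{\lambda_1}/2<\alpha\beta\le\alpha^{\lambda_1}/2$ turns the second equation into the requirement $\alpha^{1-\lambda_1}b\in(m_1-\tfrac12,m_1+\tfrac12]$, i.e. $m_1$ is the unique integer nearest to $\alpha^{1-\lambda_1}b$ (ties resolved upward); this determines $m_1$ and hence $\beta$, and $m_2,\gamma$ are found identically from the third equation. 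This yields a unique $(i,u)$, establishing the partition and therefore both asserted properties.

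Finally I would check the conditions under which the definitions apply: $U^{\lambda_1,\lambda_2}$ contains the identity (take $\alpha=1$, $\beta=\gamma=0$) together with an open box around it, so it is a neighborhood of the identity, and since $\alpha$ ranges over $(\tfrac23,\tfrac43]$ and the off-diagonal entries over bounded sets, it is relatively compact. The claim that $\Gamma^{\lambda_1,\lambda_2}$ induces a covering of the dual orbit is then immediate by pushing the cover $\bigcup_i B_i U=H^{\lambda_1,\lambda_2}$ forward through the orbit map $p_{\xi_0}$, and the notation $A^{\lambda_1,\lambda_2}_{n,m_1,m_2,\epsilon}:=\bigl(B^{\lambda_1,\lambda_2}_{n,m_1,m_2,\epsilon}\bigr)^{-T}$ requires nothing further. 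The only genuine obstacle is bookkeeping: handling the half-open intervals precisely so that each of $n$, $m_1$ and $m_2$ is \emph{forced} uniquely rather than merely existing, which is exactly what upgrades the argument from separate density and separation statements to a clean partition and lets one and the same $U$ play both roles.
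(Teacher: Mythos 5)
Your proof is correct and complete. Note that the paper itself states this lemma without proof (the details are deferred to the cited thesis), so there is no in-paper argument to compare against; but the partition argument you give is the natural one, and every step checks out: the product $B_{n,m_1,m_2,\epsilon}u$ has scaling parameter $2^n\alpha$ and sign $\epsilon$, the sign is forced by the sign of the $(1,1)$ entry, the condition $\alpha=a/2^n\in(2/3,4/3]$ is equivalent to $2^n\in[3a/4,3a/2)$, which contains exactly one power of two, and the half-open constraints on $\alpha\beta$ and $\alpha\gamma$ translate into $\alpha^{1-\lambda_1}b\in(m_1-\tfrac12,m_1+\tfrac12]$ and $\alpha^{1-\lambda_2}c\in(m_2-\tfrac12,m_2+\tfrac12]$, which pin down $m_1,m_2$ uniquely because these intervals partition $\R$. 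One trivial slip: with the intervals oriented as $(m-\tfrac12,m+\tfrac12]$, ties are resolved \emph{downward} (e.g.\ $x=m+\tfrac12$ selects $m$, not $m+1$), not upward as you wrote; this is immaterial, since your argument only uses that the intervals partition $\R$. Your closing checks are also exactly what the lemma requires: $U^{\lambda_1,\lambda_2}$ contains an open box around the identity and has compact closure inside $H^{\lambda_1,\lambda_2}$ (the closure keeps $\alpha\geq 2/3>0$), and the covering claim follows by pushing $\bigcup_i B_i U=H^{\lambda_1,\lambda_2}$ through the orbit map $p_{\xi_0}$, with admissibility of the induced covering supplied by the general theory the paper invokes for arbitrary well-spread families.
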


\begin{lemma}\label{lem:ToeplitzWellSpread}
Define
\begin{align*}
B^{\delta}_{n, m_1, m_2, \epsilon} 
:=
\epsilon
\begin{pmatrix}
2^n &	 m_1 2^n 			& m_2 2^n	\\
0 &	 2^{n(1-\delta)} 	& m_12^{n(1-\delta)}		\\
0 &	  0	& 2^{n(1-2\delta)}
\end{pmatrix}
\in H^{\delta}
\end{align*}
for $n, m_1, m_2 \in \Z$ and $\epsilon\in \{\pm 1\}$. Then the family $\Gamma^{\delta} := \left(B^{\delta}_{n, m_1, m_2, \epsilon} \right)_{(n,m_1,m_2,\epsilon)\in I}$
is $U^{\delta}$ -dense and -separated, with
\begin{align*}
U^{\delta} &:= 
\Set{
\begin{pmatrix}
\alpha 	&	 \alpha\beta 			& \alpha\gamma 			\\
0 		&	 \alpha^{1-\delta} 	& 	\alpha^{1-\delta}\beta				\\
0 		&	  0						& \alpha^{1-2\delta}
\end{pmatrix}
|
\begin{array}{cll}
\frac{2}{3} &< \alpha &\leq\frac{4}{3}, \\
-\frac{\alpha^{1-\delta}}{2} &< \alpha\beta &\leq \frac{\alpha^{1-\delta}}{2}, \\
-\frac{\alpha^{1-2\delta}}{2} &< \alpha\gamma &\leq \frac{\alpha^{1-2\delta}}{2}
\end{array}
}
\subset H^{\delta}.\\
\hfill
\end{align*}
Furthermore, this well-spread family induces a covering $\mathcal{C}^{\delta}$ of the associated dual orbit. For later reference, we define $A^{\delta}_{n, m_1, m_2, \epsilon} 
:=\left(B^{\delta}_{n, m_1, m_2, \epsilon}\right)^{-T}$.
\end{lemma}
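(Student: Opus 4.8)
The plan is to prove both assertions at once by showing that the half-open ``box'' $U^\delta$ is a strict fundamental domain for the left action of $\Gamma^\delta$ on $H^\delta$: every $h\in H^\delta$ admits a \emph{unique} factorization $h=B^\delta_{n,m_1,m_2,\epsilon}\,u$ with $u\in U^\delta$. Existence of such a factorization is precisely $U^\delta$-density, since it says $\bigcup_i B^\delta_i U^\delta=H^\delta$; uniqueness is precisely the disjointness $B^\delta_i U^\delta\cap B^\delta_j U^\delta=\emptyset$ for $i\neq j$, which gives $U^\delta$-separation with the choice $V=U^\delta$. So everything reduces to one tiling statement.

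First I would record the group law in the coordinates $(a,b,c,\epsilon)$ parametrizing $H^\delta$. A direct matrix multiplication shows that in the positive branch
\[
(a_1,b_1,c_1)\cdot(a_2,b_2,c_2)=\left(a_1a_2,\; b_2+a_2^{-\delta}b_1,\; c_2+a_2^{-\delta}b_1b_2+a_2^{-2\delta}c_1\right),
\]
with signs multiplying separately. Writing $B^\delta_{n,m_1,m_2,\epsilon}$ as $(2^n,m_1,m_2,\epsilon)$, $u$ as $(\alpha,\beta,\gamma)$ and the target as $h=(a,b,c,\epsilon_h)$, the equation $B^\delta_{n,m_1,m_2,\epsilon}\,u=h$ becomes the system $2^n\alpha=a$, $\beta+\alpha^{-\delta}m_1=b$, $\gamma+\alpha^{-\delta}m_1\beta+\alpha^{-2\delta}m_2=c$, together with $\epsilon=\epsilon_h$. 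I would solve this sequentially against the defining constraints of $U^\delta$. The first equation with $\tfrac23<\alpha\le\tfrac43$ forces $2^n$ to be the unique power of two in $[\tfrac{3a}{4},\tfrac{3a}{2})$ (an interval whose endpoints have ratio $2$), fixing $n$ and hence $\alpha=a2^{-n}$. With $\alpha$ fixed, the bound on $\alpha\beta$ (equivalently $-\tfrac{\alpha^{-\delta}}{2}<\beta\le\tfrac{\alpha^{-\delta}}{2}$) and $\beta=b-\alpha^{-\delta}m_1$ force $b\alpha^\delta-m_1\in(-\tfrac12,\tfrac12]$, pinning down the unique integer $m_1$. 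Finally, with $\alpha,\beta,m_1$ fixed, the bound on $\alpha\gamma$ and $\gamma=c-\alpha^{-\delta}m_1\beta-\alpha^{-2\delta}m_2$ force $c\alpha^{2\delta}-m_1\alpha^\delta\beta-m_2\in(-\tfrac12,\tfrac12]$, pinning down the unique integer $m_2$. Existence and uniqueness are obtained in the same breath.

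The main obstacle is the Toeplitz coupling: unlike in the standard group, the $c$-component of the product carries the cross term $\alpha^{-\delta}m_1\beta$, so the three integer parameters cannot be chosen independently. The key observation that dissolves this difficulty is that the system is \emph{triangular} --- $n$ depends only on $a$, then $m_1$ only on $(b,\alpha)$, then $m_2$ only on $(c,\alpha,\beta,m_1)$ --- so the sequential solution never revisits an earlier choice, and at each step the task is merely to place a real number into a half-open interval of the correct length (ratio $2$ for the scaling coordinate, length $1$ for each shearing coordinate), which always succeeds uniquely. The half-open conventions in $U^\delta$ are exactly what ensure that the factorization is genuinely unique, with no double counting along boundaries.

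It then remains to verify the soft hypotheses. The set $U^\delta$ is relatively compact, since once $\alpha\in(\tfrac23,\tfrac43]$ the quantities $\alpha\beta$ and $\alpha\gamma$ range over bounded intervals; and it is a neighborhood of the identity, since the identity corresponds to $(\alpha,\beta,\gamma)=(1,0,0)$, which satisfies all three constraints strictly and hence lies in the interior. This confirms that $U^\delta$ is a legitimate choice for both the density and the separation neighborhood, and that $\Gamma^\delta$ therefore induces a covering $\mathcal{C}^\delta$ of the associated dual orbit. The whole argument is the Toeplitz analogue of the computation underlying Lemma \ref{lem:StandardWellSpread}, the only genuinely new feature being the triangular handling of the coupling term.
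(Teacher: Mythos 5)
Your proposal is correct: the group-law computation $(a_1,b_1,c_1)\cdot(a_2,b_2,c_2)=(a_1a_2,\,b_2+a_2^{-\delta}b_1,\,c_2+a_2^{-\delta}b_1b_2+a_2^{-2\delta}c_1)$ is accurate, and the triangular, sequential determination of $n$, $m_1$, $m_2$ via the half-open constraints does yield a unique factorization $h=B^\delta_{n,m_1,m_2,\epsilon}u$, which simultaneously gives $U^\delta$-density and $U^\delta$-separation (with $V=U^\delta$, a relatively compact neighborhood of the identity). The paper itself states this lemma without proof, deferring details to the cited dissertation, and your strict-fundamental-domain (tiling) argument is precisely the natural computation underlying such statements, matching the structure one would use for Lemma \ref{lem:StandardWellSpread} with the extra coupling term handled correctly.
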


For the following investigation of the existence of embeddings of the associated shearlet coorbit into Sobolev spaces, it is not necessary to have an explicit description of these induced coverings, but the reader can find one in Ref. \refcite{RKDoktorarbeit}.

\section{Embeddings into Sobolev spaces}
\label{sect:Embeddings}

Our goal in this section is to study the embedding behavior of the coorbit spaces  $\Co\left(\mathrm{L}^{p,q}_m(\R^3\rtimes H^\delta)\right)$ and $\Co\left(\mathrm{L}^{p,q}_m(\R^3\rtimes H^{\lambda_1, \lambda_2})\right)$ associated to three dimensional shearlet groups into Sobolev spaces for $p,q \in (0,\infty]$ and a specific class of weights $m$ on $\GL(\R^3)$.

This section is based on methods developed in Ref. \refcite{VoigtlaenderEmbeddingsOfDecSpInSobolebAndBVSPaces}. There, the two dimensional case was considered and characterized to a large extent. Similar questions related to the embedding of (subspaces) of certain shearlet coorbit spaces into classical smoothness spaces were also investigated in Ref. \refcite{DahlkeShearletCoorbitSpacesCompactlySupported} (also in two dimensions).

At first, we introduce the necessary tools, in particular, a definition of Sobolev spaces for integrability exponents $0<p<1$, and another type of partition of unity. Afterwards, we explain what we precisely mean by an embedding of a coorbit space into a Sobolev space, which will depend on the identification of the coorbit space with a suitable decomposition space, and present the result in Ref. \refcite{VoigtlaenderEmbeddingsOfDecSpInSobolebAndBVSPaces} which is a crucial tool in this chapter.

In the following sections, we apply the result and will compare coorbit spaces associated to different groups with regards to their embedding behavior. Surprisingly, the embedding behavior of shearlet groups in dimension three into Sobolev spaces is determined by the scaling subgroup of the group, which means the shearing part has no influence on the embedding behavior for parameters in a suitable range and the class of weights we consider.

We will also see how the group parameters influence the smoothness of the elements in the associated coorbit spaces.

We start by giving a definition of Sobolev spaces, which is completely classical in the Banach space case. The definition in the quasi-Banach space case used in Ref. \refcite{VoigtlaenderEmbeddingsOfDecSpInSobolebAndBVSPaces} is inspired by the definition by Peetre \cite{PeetreSobolev} and the resulting spaces exhibit quite unexpected properties. For example, the definition as tuple of functions is motivated by the fact that $(\partial^\alpha f)_{\alpha\in \N_0^d, \abs{\alpha}_1 \leq k}\mapsto f_0$ is not injective in general. Since we heavily rely on the results in Ref. \refcite{VoigtlaenderEmbeddingsOfDecSpInSobolebAndBVSPaces} , we adhere to the definition employed there.

\begin{definition}[Ref. \refcite{VoigtlaenderEmbeddingsOfDecSpInSobolebAndBVSPaces} Subsection 2.1]
We define
\begin{align*}
W^{k,q}(\R^d):=\Set{f\in \mathrm{L}^q(\R^d) | 
\begin{array}{l}
\partial^\alpha f \in \mathrm{L}^q(\R^d) \text{ for all }\\
\alpha\in \N_0^d,\ |\alpha|_1 \leq k
\end{array}
}
\end{align*}
for $q\in [1,\infty]$, where $\partial^\alpha f$ denotes the weak partial derivative of $f$ and for $0<q<1$, let $W^{k,q}(\R^d)$ be the closure of
\begin{align*}
W^{k,q}_*(\R^d):=\Set{ (\partial^\alpha f)_{\alpha\in \N_0^d, \abs{\alpha}_1 \leq k} | 
\begin{array}{l}
f\in C^\infty(\R^d),\ \partial^\alpha f \in \mathrm{L}^q(\R^d)\\
\text{for all }\alpha\in \N_0^d,\ |\alpha|_1 \leq k
\end{array}
}
\end{align*}
in the product $\prod_{\alpha\in \N_0^d,\ |\alpha|_1 \leq k}L^q(\R^d)$.
\end{definition}

We will define suitable differentiation operators on decomposition spaces by resorting to special partitions of unity. 

\begin{definition}[Ref. \refcite{VoigtlaenderEmbeddingsOfDecSpInSobolebAndBVSPaces} Definition 2.4.]
Let $\calQ=(T_iQ+b_i)_{i\in I}$ be a structured admissible covering of some open set $\calO \subset  \R^d$ and let $(\phi_i)_{i\in I}$ be a partition of unity subordinate to $\calQ$ with $\phi_i \in C_c^\infty (\calO)$. The \textit{normalized version} of $\phi_i$ is given by
$\phi_i^\#:\R^d \to \C,\ x\mapsto \phi_i(T_ix+b_i)$
for $i\in I$. Additionally, we say that $(\phi_i)_{i\in I}$ is a \textit{regular partition of unity}\index{regular partition of unity} if
$\sup_{i\in I}\norm*{\partial^\alpha \phi_i^\#}_\mathrm{sup} < \infty$
for all $\alpha\in \N_0^d$.
\end{definition}

The connection between regular partitions of unity and $L^p$-BAPUs as well as their existence for our preferred type of covering are given by the next lemmata.

\begin{lemma}[Ref. \refcite{VoigtlaenderEmbeddingsOfDecSpInSobolebAndBVSPaces} Corollary 2.7.]
If $(\phi_i)_{i\in I}$ is a regular partition of unity subordinate to a structured admissible covering $\calQ$, then $(\phi_i)_{i\in I}$ is an $L^p$-BAPU for every $p\in (0,\infty]$.
\end{lemma}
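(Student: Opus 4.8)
The plan is to verify, for the family $(\phi_i)_{i\in I}$ playing the role of $(\varphi_i)_{i\in I}$ in the $L^p$-BAPU definition, the three elementary requirements i)--iii) directly, and to reduce the quantitative requirement iv) to a single uniform estimate on the normalized pieces $\phi_i^\#$. Conditions i) and iii) hold because $(\phi_i)_{i\in I}$ is by hypothesis a smooth partition of unity subordinate to $\calQ=(T_iQ+b_i)_{i\in I}$ with $\phi_i\in C_c^\infty(\calO)$ and $\mathrm{supp}\,\phi_i\subset Q_i$; condition ii) is exactly the partition-of-unity property. For iv), the key is the behaviour of the inverse Fourier transform under the affine change of variables $\xi=T_ix+b_i$. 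Writing $\phi_i(\xi)=\phi_i^\#\bigl(T_i^{-1}(\xi-b_i)\bigr)$ and substituting, one obtains (with the normalization of $\F$ used here)
\begin{equation*}
\F^{-1}\phi_i(x)=\abs{\det T_i}\,e^{2\pi i\,b_i\cdot x}\,(\F^{-1}\phi_i^\#)(T_i^T x),
\end{equation*}
so the modulation factor is irrelevant for absolute values and, after the substitution $y=T_i^T x$,
\begin{equation*}
\norm{\F^{-1}\phi_i}_{\mathrm{L}^p}=\abs{\det T_i}^{1-\frac1p}\,\norm{\F^{-1}\phi_i^\#}_{\mathrm{L}^p}.
\end{equation*}
Inserting this into requirement iv) makes both cases collapse to the \emph{same} condition: for $1\le p\le\infty$ the relevant exponent is $1$, where the prefactor $\abs{\det T_i}^{1-1/p}$ is trivial, while for $0<p<1$ the weight $\abs{\det T_i}^{1/p-1}$ exactly cancels $\abs{\det T_i}^{1-1/p}$. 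In every case it therefore suffices to prove the uniform bound $\sup_{i\in I}\norm{\F^{-1}\phi_i^\#}_{\mathrm{L}^r}<\infty$ (with $r=1$ in the first case and $r=p$ in the second), and I will in fact establish this for every $r\in(0,\infty]$ at once.

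Next I would record that the normalized pieces all live in one fixed compact set. Indeed $\phi_i^\#(x)=\phi_i(T_ix+b_i)$ is nonzero only when $T_ix+b_i\in\mathrm{supp}\,\phi_i\subset T_iQ+b_i$, i.e.\ only when $x\in Q$; hence $\mathrm{supp}\,\phi_i^\#\subset\overline{Q}$ for every $i$, and $\overline{Q}$ is compact since $Q$ is precompact. The heart of the argument is then the uniform decay of $\F^{-1}\phi_i^\#$, and this is where the regularity hypothesis enters. Using that multiplication by the monomial $\xi^\alpha$ corresponds under $\F^{-1}$ to applying $\partial^\alpha$ up to a convention-dependent constant, one gets
\begin{equation*}
\abs{\xi^\alpha\,\F^{-1}\phi_i^\#(\xi)}\le C\,\norm{\partial^\alpha\phi_i^\#}_{\mathrm{L}^1}\le C\,\abs{\overline{Q}}\,\norm{\partial^\alpha\phi_i^\#}_{\mathrm{sup}},
\end{equation*}
where the passage to the supremum norm uses the uniform support from the previous step. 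By the defining property of a regular partition of unity, $\sup_{i}\norm{\partial^\alpha\phi_i^\#}_{\mathrm{sup}}<\infty$ for every $\alpha$, so these bounds are uniform in $i$. Summing over $\abs{\alpha}\le N$ yields, for each $N\in\N$, a constant $C_N$ with
\begin{equation*}
\sup_{i\in I}\ \sup_{\xi\in\R^d}(1+\abs{\xi})^N\,\abs{\F^{-1}\phi_i^\#(\xi)}\le C_N.
\end{equation*}
Choosing $N$ with $Nr>d$ makes the majorant $C_N(1+\abs{\xi})^{-N}$ lie in $\mathrm{L}^r(\R^d)$, and its $\mathrm{L}^r$-norm bounds $\norm{\F^{-1}\phi_i^\#}_{\mathrm{L}^r}$ uniformly in $i$. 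This delivers the required uniform bound and completes the verification of iv).

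I expect the main obstacle to be precisely this decay step: one must convert uniform \emph{sup}-norm control of the derivatives $\partial^\alpha\phi_i^\#$ into uniform \emph{integrable} decay of $\F^{-1}\phi_i^\#$, and the crucial point making the estimate uniform across all $i$ is that the supports $\mathrm{supp}\,\phi_i^\#$ are contained in the single compact set $\overline{Q}$ independent of $i$ -- this is exactly what lets one replace $\mathrm{L}^1$-norms of derivatives by sup-norms with an $i$-independent constant. Everything else is bookkeeping, with care needed only in tracking the power $\abs{\det T_i}^{1-1/p}$ so that the two cases of iv) line up correctly.
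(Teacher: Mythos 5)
Your proposal is correct. Note that this paper does not actually prove the lemma --- it is quoted as an imported result (Voigtlaender, \emph{Embeddings of decomposition spaces into Sobolev and BV spaces}, Corollary 2.7), so there is no internal proof to compare against; your argument is precisely the standard one underlying the cited result: conditions i)--iii) are immediate from the hypothesis, the affine substitution gives $\norm{\F^{-1}\phi_i}_{\mathrm{L}^p}=\abs{\det T_i}^{1-\frac1p}\norm{\F^{-1}\phi_i^\#}_{\mathrm{L}^p}$ so that the Jacobian factor exactly cancels the normalization $\abs{\det T_i}^{\frac1p-1}$ in condition iv), and the uniform sup-norm bounds on $\partial^\alpha\phi_i^\#$ combined with the common compact support $\overline{Q}$ yield uniform polynomial decay of $\F^{-1}\phi_i^\#$, hence a uniform $\mathrm{L}^r$-bound for every $r\in(0,\infty]$. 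The exponent bookkeeping and the decay step are both handled correctly, so there is nothing to fix.
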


\begin{lemma}[Ref. \refcite{VoigtlaenderEmbeddingsOfDecSpInSobolebAndBVSPaces} Theorem 2.8.]
Every structured admissible covering admits a subordinate regular partition of unity.
\end{lemma}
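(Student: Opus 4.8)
Write $\calQ=(Q_i)_{i\in I}$ with $Q_i=T_iQ+b_i$. The natural strategy is the classical ``bump-and-normalize'' construction, followed by uniform derivative estimates in the normalized coordinates. First I would fix a single nonnegative mother bump $\rho\in C_c^\infty(\R^d)$ with $\operatorname{supp}\rho\subset Q$, chosen so that its affine copies $\rho_i(y):=\rho\big(T_i^\inv(y-b_i)\big)$, which satisfy $\operatorname{supp}\rho_i\subset Q_i$, still cover $\calO$ in the sense that $\sigma(y):=\sum_{i\in I}\rho_i(y)>0$ for every $y\in\calO$. By the bounded-overlap property (ii) of the admissible covering, at each point of $\calO$ only finitely many (indeed uniformly boundedly many) summands $\rho_i$ are nonzero, so $\sigma$ is a well-defined, locally finite, smooth and strictly positive function on $\calO$. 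Setting $\phi_i:=\rho_i/\sigma$ then produces a family in $C_c^\infty(\calO)$ with $\operatorname{supp}\phi_i\subset Q_i$ and $\sum_{i}\phi_i\equiv 1$ on $\calO$, i.e.\ a partition of unity subordinate to $\calQ$.

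The remaining and essential task is the regularity estimate $\sup_{i\in I}\norm{\partial^\alpha\phi_i^\#}_{\mathrm{sup}}<\infty$ for every multi-index $\alpha$. Here the normalization is built precisely so that the $i$-dependence simplifies: since $\rho_i(T_ix+b_i)=\rho(x)$, one computes $\phi_i^\#(x)=\phi_i(T_ix+b_i)=\rho(x)/\sigma_i(x)$ with $\sigma_i(x):=\sigma(T_ix+b_i)=\sum_{j}\rho\big(T_j^\inv T_i\,x+T_j^\inv(b_i-b_j)\big)$. Because $\rho$ is a fixed smooth compactly supported function, the entire $i$-dependence of $\phi_i^\#$ sits in $\sigma_i$. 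By the quotient and Leibniz rules, $\partial^\alpha\phi_i^\#$ is a finite sum of products of derivatives $\partial^\beta\rho$, derivatives $\partial^\gamma\sigma_i$ and powers of $1/\sigma_i$, all evaluated on $\operatorname{supp}\rho$. Thus it suffices to bound, uniformly in $i$ and on $\operatorname{supp}\rho$, both the quantity $1/\sigma_i$ from above and each derivative $\partial^\gamma\sigma_i$.

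Both uniform bounds reduce to the geometric regularity of the covering. For the derivative bound, only indices $j$ with $Q_j\cap Q_i\neq\emptyset$ contribute to $\sigma_i$ on $\operatorname{supp}\rho$, and these are uniformly bounded in number by (ii); each derivative $\partial^\gamma\big[\rho(T_j^\inv T_i\,x+\cdots)\big]$ produces $\partial^\beta\rho$ times a polynomial in the entries of the matrix $T_j^\inv T_i$, so the bound follows once $\norm{T_j^\inv T_i}_{\mathrm{op}}$ is controlled uniformly over all neighboring pairs $Q_i\cap Q_j\neq\emptyset$. This comparability of the linear parts of adjacent boxes is exactly the structural regularity hypothesis carried by a structured admissible covering, and I would invoke it here. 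For the lower bound, the strict positivity $\sigma>0$ has to be upgraded to a uniform estimate $\sigma_i\geq c>0$ on $\operatorname{supp}\rho$, which is where the coverage must have genuine ``slack'': the mother bump $\rho$ should be chosen so that the copies $\rho_j$ over-cover a neighborhood of each $Q_i$, again using the uniform geometry of the covering.

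I expect securing the uniform positive lower bound on the normalizing sum $\sigma_i$, together with establishing the uniform comparability $\sup_{Q_i\cap Q_j\neq\emptyset}\norm{T_j^\inv T_i}_{\mathrm{op}}<\infty$, to be the main obstacle; once these two uniformities are in hand, the derivative bounds follow routinely from the quotient rule and the boundedness of the number of overlapping neighbors. These are precisely the points where the ``structured'' hypothesis, beyond mere admissibility, enters, and they form the technical heart of the statement.
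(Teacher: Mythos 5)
Your proposal is correct and is essentially the same argument as the one in the cited source (the paper itself does not prove this lemma but quotes Voigtlaender's Theorem 2.8): the bump-and-normalize construction $\phi_i = \rho_i/\sigma$, with the two uniformities you flag as the ``technical heart'' supplied directly by Voigtlaender's definition of a structured admissible covering, namely the inner set $P$ with $\overline{P}\subset Q$ whose affine copies still cover $\mathcal{O}$ (so choosing $\rho\equiv 1$ on $\overline{P}$ gives $\sigma\geq 1$ outright) and the neighbor comparability $\sup_{Q_i\cap Q_j\neq\emptyset}\lVert T_i^{-1}T_j\rVert<\infty$. In other words, the two ``main obstacles'' you identify are resolved by definition rather than by additional work, and the rest of your outline (bounded overlap, quotient/Leibniz rule on the normalized quantities $\rho/\sigma_i$) matches the standard proof.
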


These preparations allow us to define differentiation operators on decomposition spaces and to specify what we mean by an embedding of a decomposition space or coorbit space into a Sobolev space.

\begin{definition}\label{def:EmbeddingSobolev}
Let $\Q=(Q_i)_{i\in I}$ be a structured admissible covering of some open set $\mathcal{O}\subset\R^d$ and $(u_i)_{i\in I}$ a $\Q$-moderate weight on $I$. For $p,r\in (0,\infty]$, let $(\phi_i)_{i\in I}$ be a regular partition of unity for $\calQ$.
\begin{enumerate}[i)]
\item We say that $\calD(\calQ, \mathrm{L}^p, \ell^r_u)$ \textit{admits a partial differential operator} (with respect to $q$) for $\alpha\in \N_0^k$ with $k\in \N_0$ if the map
\begin{align*}
 & \partial_*^\alpha: \calD(\calQ, \mathrm{L}^p, \ell^r_u) &&\to&& \mathrm{L}^q(\R^d)\\
 & \qquad f &&\mapsto&& \sum_{i\in I}\partial^\alpha\left[\F^\inv(\phi_i f)\right]
\end{align*}
is well-defined, bounded, with unconditional convergence of the series.

\item We write  $\calD(\calQ, \mathrm{L}^p, \ell^r_u) \hookrightarrow W^{k,q}(\R^d)$ if $\calD(\calQ, \mathrm{L}^p, \ell^r_u)$ admits a partial differential operator in the sense of i) and 
\begin{align*}
\iota^{(k)}_q: &\calD(\calQ, \mathrm{L}^p, \ell^r_u) &&\to&& W^{k,q}(\R^d)\\
&f &&\mapsto&& \partial_*^0 f
\end{align*}
is well-defined, bounded and injective for $q\geq 1$.

In the case $0<q<1$, we require that the map
\begin{align*}
\iota^{(k)}_q: &\calD(\calQ, \mathrm{L}^p, \ell^r_u) &&\to&& W^{k,q}(\R^d)\\
& f &&\mapsto&& (\partial^\alpha_* f )_{\abs{\alpha}_1 \leq k}
\end{align*}
is well-defined and bounded.

\item We write $\Co\left(\mathrm{L}^{p,r}_m(\R^d\rtimes H)\right)\hookrightarrow W^{k,q}(\R^d)$ for $r, q\in (0,\infty]$, where $H$ is an admissible group with dual orbit $\calO$ and $m$ is a weight on $H$ that is right moderate with respect to a locally bounded weight on $H$ if the associated isomorphic decomposition space in the sense of Theorem \ref{thm: FourierIsoCoorbitDecSpaces} satisfies
$\calD(\calQ, \mathrm{L}^p, \ell^r_u)\hookrightarrow W^{k,q}(\R^d).$
\end{enumerate}
\end{definition}

\begin{remark}
\begin{enumerate}[i)]
\item This definition is inspired by Theorem 3.4. and Corollary 4.5. in Ref. \refcite{VoigtlaenderEmbeddingsOfDecSpInSobolebAndBVSPaces}. We had to adapt the definition of the partial differential operator slightly (by changing $\hat{f}$ to $f$) because Ref. \refcite{VoigtlaenderEmbeddingsOfDecSpInSobolebAndBVSPaces} works with space-side decomposition spaces. Despite this change, all the results and proofs in Ref. \refcite{VoigtlaenderEmbeddingsOfDecSpInSobolebAndBVSPaces} can be applied to our setting by just interchanging $\hat{f}$ to $f$ at the appropriate place.

\item A motivation for this definition is the fact that if $\calD(\calQ, \mathrm{L}^p, Y)$ admits a partial differential operator $\partial_*^\alpha$, then $\partial_*^\alpha f = \partial^\alpha\left(\F^\inv f\right)$
for all $f\in \calD(\calQ, \mathrm{L}^p, Y) \cap C_c^\infty(\calO)$, according to Ref. \refcite{VoigtlaenderEmbeddingsOfDecSpInSobolebAndBVSPaces} Theorem 3.4. Furthermore, $\calD(\calQ, \mathrm{L}^p, Y) \hookrightarrow W^{k,q}(\R^d)$ implies $\partial^\alpha (\iota_q^{(k)} f) = \partial_*^\alpha f$ for $f\in \calD(\calQ, \mathrm{L}^p, Y)$ and $|\alpha|_1\leq k$, according to Corollary 3.5 in Ref. \refcite{VoigtlaenderEmbeddingsOfDecSpInSobolebAndBVSPaces}. Hence, $\Co(\mathrm{L}^{p,q}_m(\R^d\rtimes H)\hookrightarrow W^{k,q}(\R^d)$ implies 
\begin{align*}
&\Co\left(\mathrm{L}^{p,q}_m(\R^d\rtimes H)\right)&\xrightarrow{\F} &\calD(\calQ, \mathrm{L}^p, \ell^q_u) &&\xrightarrow{\partial_*^\alpha} \mathrm{L}^q(\R^d)\\
&f &\xmapsto{\F} &\hat{f} &&\xmapsto{\partial_*^\alpha}\partial_*^\alpha \hat{f} = \partial^\alpha f
\end{align*}

for $f\in  \Co(\mathrm{L}^{p,q}_m(\R^d\rtimes H) \cap \F^\inv(C_c^\infty(\calO))$ and $|\alpha| \leq k$, where we used that $\F f = \what{f}$ for $f\in \F^\inv(C_c^\infty(\calO)) \subset \mathrm{L}^2(\R^d)$, according to Ref. \refcite{Voigtlaender2015PHD} Theorem 3.2.1. Here, $\what{f}$ denotes the usual Fourier transform of a function.

\item The phrase \textit{the associated isomorphic decomposition space} in (3) is justified, because the same reasoning as in the proof of Corollary 3.6.4 in Ref. \refcite{RKDoktorarbeit} shows that this space is uniquely determined.
\end{enumerate}
\end{remark}

We will employ the  following theorem  to obtain sufficient and necessary conditions for the embedding of shearlet coorbit spaces in three dimensions into Sobolev spaces in the sense of the last definition. Here, we define $p^{\triangledown}:=\min\left\{ p,p'\right\}$, where $p'=\infty$ for $0<p <1$ and $p'$ is the usual conjugate exponent for $p\geq 1$.

\begin{theorem}[Ref. \refcite{VoigtlaenderEmbeddingsOfDecSpInSobolebAndBVSPaces} Corollary 5.2]\label{thm:SobolevEmbedding}
Let $\Q=(T_iQ+b_i)_{i\in I}$ be a structured admissible covering of some open set $\mathcal{O}\subset\R^d$. Let
$p,q, r\in (0,\infty],\ k\in \N_0$
and let $u=(u_i)_{i\in I}$ be a $\calQ$-moderate weight on $I$. Define the weight
\begin{align*}
w_i^{\{q\}}:= \abs*{\det\left(T_i\right)}^{\frac{1}{p} - \frac{1}{q}}\left(1+\abs{b_i}^k + \norm*{T_i}^k\right),
\end{align*}
for $i\in I$, where $\norm*{\cdot}$ is some norm on $\GL(\R^d)$. Then the following hold:
\begin{enumerate}[i)]
\item If $p\leq q$ and 
$
\frac{w^{\{q\}}}{u} \in \ell^{q^\triangledown \cdot \left(r/q^\triangledown\right)'}(I),
$
then
$
\calD(\calQ, \mathrm{L}^p, \ell^r_u) \hookrightarrow W^{k,q}(\R^d).
$

\item If $\calD(\calQ, \mathrm{L}^p, \ell^r_u) \hookrightarrow W^{k,q}(\R^d)$, then $p\leq q$ and
$$\frac{w^{\{q\}}}{u} \in \ell^{q \cdot \left(r/q\right)'}(I) \text{ for } q<\infty
\quad\text{ and}\quad
\frac{w^{\{q\}}}{u} \in \ell^{r'}(I) \text{ for } q=\infty.
$$
\end{enumerate}
\end{theorem}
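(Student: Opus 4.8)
The plan is to prove both directions by reducing every quantity to the building blocks $g_i := \F^\inv(\varphi_i f)$, $i\in I$, where $(\varphi_i)_{i\in I}$ is a regular partition of unity subordinate to $\calQ$; each $g_i$ is then band-limited to $Q_i = T_iQ+b_i$. For the sufficiency part (i) I would estimate $\Vert \partial^\alpha \iota_q^{(k)}f\Vert_{\mathrm{L}^q}$ for $\abs{\alpha}_1\le k$ and show it is controlled by $\Vert f\Vert_{\calD(\calQ,\mathrm{L}^p,\ell^r_u)}$, using three estimates that together reconstruct the two factors defining $w_i^{\{q\}}$. The first is a Nikolskii-type inequality: since $\varphi_if$ is supported in a set of measure comparable to $\abs{\det T_i}$, one has for $p\le q$
\begin{align*}
\Vert g_i\Vert_{\mathrm{L}^q} \lesssim \abs{\det T_i}^{\frac1p-\frac1q}\,\Vert g_i\Vert_{\mathrm{L}^p},
\end{align*}
which produces the factor $\abs{\det T_i}^{1/p-1/q}$.

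The second is a multiplier estimate for the monomial $\xi\mapsto (2\pi i\xi)^\alpha$: writing $\partial^\alpha g_i = \F^\inv\big((2\pi i\xi)^\alpha\psi_i\big)\ast g_i$ for an auxiliary $\psi_i\equiv 1$ on $\operatorname{supp}\varphi_i$, Young's inequality (and its quasi-Banach analogue for $0<q<1$), together with the bound $\abs{\xi}\lesssim\abs{b_i}+\norm{T_i}$ on $Q_i$ and the uniform control of the normalized versions $\psi_i^\#$, gives
\begin{align*}
\Vert\partial^\alpha g_i\Vert_{\mathrm{L}^q} \lesssim \big(1+\abs{b_i}^k+\norm{T_i}^k\big)\,\Vert g_i\Vert_{\mathrm{L}^q}.
\end{align*}
This is the step where \emph{regularity} of the partition of unity is essential, as it bounds $\Vert\F^\inv((2\pi i\xi)^\alpha\psi_i)\Vert_{\mathrm{L}^1}$ uniformly after rescaling. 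The third is a superposition estimate: because $\calQ$ is admissible (bounded overlap), the pieces behave almost orthogonally, yielding
\begin{align*}
\Big\Vert\sum_{i\in I}\partial^\alpha g_i\Big\Vert_{\mathrm{L}^q} \lesssim \big\Vert(\Vert\partial^\alpha g_i\Vert_{\mathrm{L}^q})_i\big\Vert_{\ell^{q^\triangledown}},
\end{align*}
where $q^\triangledown=\min\{q,q'\}$ arises from interpolating the trivial $\ell^q$-bound against the $\ell^{q'}$-bound obtained by duality for $q\ge 2$; for $0<q<1$ it is just the $q$-subadditivity of $\Vert\cdot\Vert_{\mathrm{L}^q}^q$. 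Combining the three estimates bounds the left side by $\Vert(w_i^{\{q\}}\Vert g_i\Vert_{\mathrm{L}^p})_i\Vert_{\ell^{q^\triangledown}}$; factoring $w_i^{\{q\}}\Vert g_i\Vert_{\mathrm{L}^p}=(w_i^{\{q\}}/u_i)\cdot(u_i\Vert g_i\Vert_{\mathrm{L}^p})$ and applying H\"older in the sequence spaces with $\tfrac{1}{q^\triangledown}=\tfrac1s+\tfrac1r$ gives boundedness precisely when $\tfrac{w^{\{q\}}}{u}\in\ell^s(I)$ with $s=q^\triangledown(r/q^\triangledown)'$, which is the hypothesis of (i); injectivity for $q\ge 1$ then follows from density of nice functions.

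For the necessity part (ii) I would test the assumed bounded embedding against carefully chosen inputs. Testing a single-index $f$ with $\varphi_if$ concentrated in one $Q_i$ and letting $i$ vary forces $p\le q$, by the sharpness of the Nikolskii inequality, and forces $\tfrac{w_i^{\{q\}}}{u_i}$ to be bounded. To obtain the full summability, I would feed in finite superpositions $f=\sum_{i\in I_0}c_i\gamma_i$ with $\gamma_i$ normalized bumps adapted to $Q_i$, so that $\Vert f\Vert_{\calD}\sim\Vert(u_ic_i)\Vert_{\ell^r}$ while $\Vert\iota_q^{(k)}f\Vert_{W^{k,q}}$ is bounded below by a multiple of $\Vert(w_i^{\{q\}}c_i)\Vert_{\ell^q}$; the sharp lower superposition bound, obtained through a randomization (Khintchine) argument over the signs $\pm\gamma_i$, is what replaces $q^\triangledown$ by $q$. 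Boundedness of the embedding then states that a bilinear pairing is bounded, and duality in the sequence spaces extracts exactly $\tfrac{w^{\{q\}}}{u}\in\ell^{q(r/q)'}(I)$, respectively $\ell^{r'}(I)$ when $q=\infty$.

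The main obstacle I anticipate is the gap between the sufficient exponent $q^\triangledown(r/q^\triangledown)'$ in (i) and the necessary exponent $q(r/q)'$ in (ii). This gap is genuine and originates from the superposition estimate: the upper bound only gives $\ell^{q^\triangledown}$ control of the pieces, whereas the randomized lower bound yields $\ell^q$ control. The two exponents coincide exactly when $q^\triangledown=q$, i.e. for $q\le 2$, which is why the statement becomes an equivalence precisely in the range $p,q\le 2$. Making both the upper superposition estimate and its randomized lower counterpart rigorous in the quasi-Banach regime $\{p,q\}\cap(0,1)\neq\emptyset$ -- where Young's inequality and duality are unavailable in classical form and must be replaced by the convolution relations for band-limited $\mathrm{L}^q$-functions -- is the most delicate part of the argument.
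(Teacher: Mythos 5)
First, a point of order: this theorem is one the paper \emph{imports} --- it is quoted as Corollary 5.2 of Voigtlaender's paper on embeddings of decomposition spaces into Sobolev and BV spaces, and no proof of it appears in the present paper --- so your proposal can only be measured against the argument in that reference. With that caveat, your sketch of the sufficiency part (i) is essentially the genuine article: the chain consisting of a Nikolskii-type estimate producing $\abs{\det T_i}^{\frac1p-\frac1q}$, a Fourier-multiplier estimate for $(2\pi i\xi)^\alpha$ via the normalized cutoffs $\psi_i^\#$ producing $1+\abs{b_i}^k+\Vert T_i\Vert^k$, the bounded-overlap superposition principle with exponent $q^\triangledown$, and H\"older's inequality to split off the condition $w^{\{q\}}/u\in\ell^{q^\triangledown(r/q^\triangledown)'}$ is exactly how the sufficient condition is obtained, and you correctly identify where regularity of the partition of unity and the quasi-Banach convolution relations for band-limited functions must enter.

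The necessity part (ii), however, contains a genuine gap. Your lower superposition bound rests on Khintchine's inequality, which gives $\mathbb{E}\,\Vert\sum_i\epsilon_i h_i\Vert_{L^q}^q \asymp \Vert(\sum_i\abs{h_i}^2)^{1/2}\Vert_{L^q}^q$. The pointwise inequality $(\sum_i\abs{h_i}^2)^{q/2}\geq\sum_i\abs{h_i}^q$ holds only for $q\geq 2$; for $q<2$ it reverses, so the randomized quantity fails to dominate $\sum_i\Vert h_i\Vert_{L^q}^q$. Concretely, take $h_i(x)=c\,e^{2\pi i b_i\cdot x}h(x)$, modulations of a single bump $h$ (the natural test functions if all pieces are centered at the spatial origin): then $(\sum_{i\leq N}\abs{h_i}^2)^{1/2}=N^{1/2}\abs{h}$, and by H\"older against the $L^2$ bound one checks that \emph{no} choice of signs makes $\Vert\sum_{i\leq N}\epsilon_i h_i\Vert_{L^q}$ larger than a constant times $N^{1/2}$, whereas the needed lower bound is $N^{1/q}\gg N^{1/2}$ for $q<2$. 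Thus in the range $0<q<2$ --- precisely the range in which the theorem is an equivalence and the range this paper actually uses --- your mechanism does not produce the exponent $q$. The standard repair, which the reference's necessity argument relies on, is to \emph{separate the test bumps in space}: replace $\gamma_i$ by $\gamma_i(\cdot-x_i)$ with the $x_i$ far apart. This only modulates the Fourier transforms, so the frequency supports and (up to constants) the decomposition-space norm are unchanged, while the spatial supports become essentially disjoint, making $\Vert\sum_i h_i\Vert_{L^q}^q\approx\sum_i\Vert h_i\Vert_{L^q}^q$ for every $q<\infty$ with no randomization at all; the same translation trick is what actually underlies the necessity of $p\leq q$, which you attribute somewhat vaguely to ``sharpness of Nikolskii''. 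Finally, a small correction: the sufficient and necessary conditions coincide for $q\in(0,2]\cup\{\infty\}$ (at $q=\infty$ one has $q^\triangledown=1$ and $q^\triangledown\cdot(r/q^\triangledown)'=r'$), so the equivalence range is not confined to $q\leq 2$.
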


\begin{remark}
Since the necessary and sufficient conditions coincide for $q\in (0,2]\cup\{\infty\}$, this theorem provides a characterization for the embedding 
$$\calD(\calQ, \mathrm{L}^p, \ell^r_u) \hookrightarrow W^{k,q}(\R^d)$$
for $q$ in this range.
\end{remark}

Our computations will make repeated use of the symbol $\asymp$ denoting equivalence between families of scalars. More precisely,  $(a_i)_{i\in I} \asymp (b_i)_{i\in I}$ means there exist constants $C, c>0$ such that $c a_i \leq b_i \leq Ca_i$ for all $i\in I$. We will write this as $a_i \asymp b_i$ if the set $I$ is clear from the context. Moreover, we denote with $\lfloor a \rfloor$ and $\lceil b \rceil$ the biggest integer smaller than $a$ and the smallest integer bigger than $b$, respectively. 

We note the following elementary but useful observations. The proof is omitted. 

\begin{lemma} \label{lem:AsymptoticLemma}
We have the asymptotic relation 
$
\sum_{m=m_0}^\infty m^\rho \asymp m_0^{1+\rho}
$
for $m_0\in \N$ and $\rho < -1$.

If $(a_i)_{i\in I}$ is a family of real numbers with the property that there exists  $\delta \geq 0$ such that $a_i \geq\delta$ for all $i\in I$, then
\begin{enumerate}[i)]
\item $\lfloor a_i\rfloor + 1 \asymp a_i + 1 $,
\item if $\delta >0$, then $\lceil a_i \rceil ^s \asymp a_i^s$ for all $s\in \R$,
\item if $\delta >0$, then $\lfloor a_i \rfloor + 1 \asymp a_i$,
\item if $\delta \geq 1$, then $\lfloor a_i \rfloor^s\asymp a_i^s$ for all $s\in \R$,
\item if $\delta \geq 1$, then $\left(\lfloor a_i \rfloor + 1 \right)\lfloor a_i \rfloor\asymp \lfloor a_i \rfloor^2 \asymp a_i^2$.
\end{enumerate}
\end{lemma}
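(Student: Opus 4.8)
The plan is to reduce every assertion to two elementary double inequalities, an integral comparison, and one auxiliary observation: raising to a real power preserves the relation $\asymp$, provided the families involved are bounded away from $0$ uniformly in $i$.

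For the first claim I would invoke the integral test. Since $x\mapsto x^\rho$ is positive and strictly decreasing for $\rho<-1$, comparing the monotone tail sum with the corresponding integral on each unit interval gives
\[
\int_{m_0}^\infty x^\rho\,\dd x \;\leq\; \sum_{m=m_0}^\infty m^\rho \;\leq\; m_0^\rho + \int_{m_0}^\infty x^\rho\,\dd x.
\]
Evaluating the integral as $m_0^{1+\rho}/(-(1+\rho))$, and using $m_0\geq 1$ to bound $m_0^\rho\leq m_0^{1+\rho}$, shows that both the lower and the upper bound are fixed multiples of $m_0^{1+\rho}$, with constants depending only on $\rho$. This is precisely the asserted equivalence.

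The remaining items rest on the bounds $a-1<\lfloor a\rfloor\leq a$ and $a\leq\lceil a\rceil<a+1$, valid for every real $a$. I would first record the auxiliary fact that if $x_i\asymp y_i$ with both families strictly positive and uniform constants, then $x_i^s\asymp y_i^s$ for every $s\in\R$ (for $s\geq0$ the inequalities survive the $s$-th power, for $s<0$ they reverse, and in both cases the new constants are powers of the original ones). With this, each statement follows by combining the floor/ceiling bounds with the hypothesis $a_i\geq\delta$ and transitivity of $\asymp$. Concretely, for (i) the inequalities $\lfloor a_i\rfloor+1>a_i$ and $\lfloor a_i\rfloor+1\geq 1$ (the latter from $a_i\geq 0$) give $\tfrac12(a_i+1)<\lfloor a_i\rfloor+1\leq a_i+1$. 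For (ii) the chain $a_i\leq\lceil a_i\rceil<a_i(1+\delta^{-1})$, where the last step uses $a_i\geq\delta>0$, yields $\lceil a_i\rceil\asymp a_i$, and the power observation finishes it. Item (iii) is (i) combined with $a_i+1\asymp a_i$, again valid since $a_i\geq\delta>0$. For (iv) and (v) the hypothesis $\delta\geq 1$ forces $\lfloor a_i\rfloor\geq 1$, and a short case split ($1\leq a_i<2$ versus $a_i\geq 2$) gives $\tfrac12 a_i<\lfloor a_i\rfloor\leq a_i$, hence $\lfloor a_i\rfloor\asymp a_i$ and, via the power observation, $\lfloor a_i\rfloor^s\asymp a_i^s$; the first equivalence in (v) then follows from $\lfloor a_i\rfloor\leq\lfloor a_i\rfloor+1\leq 2\lfloor a_i\rfloor$.

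The main thing to keep track of, rather than a genuine obstacle, is the precise role of the three hypotheses on $\delta$. Because $\asymp$ demands constants uniform in $i$, every lower bound must be a fixed multiple of its target: this is what forces $\delta>0$ in (ii) and (iii), where one divides by $a_i$ or passes to negative powers, and $\delta\geq 1$ in (iv) and (v), where $\lfloor a_i\rfloor$ itself must be bounded away from $0$ (for $\delta<1$ one could have $\lfloor a_i\rfloor=0$, rendering $\lfloor a_i\rfloor^s$ meaningless for $s<0$). Beyond this bookkeeping, no step presents any real difficulty, which is why the authors omit the proof.
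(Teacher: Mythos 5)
The paper omits its own proof of this lemma (``The proof is omitted''), so there is no argument to compare against; your proof is correct and complete, and the tools you use---integral comparison for the tail sum $\sum_{m \geq m_0} m^\rho$, the elementary bounds $a-1 < \lfloor a \rfloor \leq a \leq \lceil a \rceil < a+1$, and stability of $\asymp$ under real powers of positive families---are exactly the standard argument the authors evidently considered too routine to record. Your closing remarks on why $\delta > 0$ (resp.\ $\delta \geq 1$) is needed in items (ii)--(iii) (resp.\ (iv)--(v)) are also accurate, since for $\delta < 1$ one can have $\lfloor a_i \rfloor = 0$, which destroys the equivalence for positive powers and makes negative powers undefined.
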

%

The next lemma is also easily verified.

\begin{lemma}\label{lem:AsymptoticLemma2}
Let $(a_i)_{i\in I}$, $(b_i)_{i\in I}$ and $(c_i)_{i\in I}$ be families of nonnegative numbers with $b_i, c_i \leq a_i$ for all $i\in I$. If there exist sets $I_1, I_2 \subset I$ with $I_1 \cup I_2 = I$, then $a_i \asymp b_i$ on $I_1$ and $a_i \asymp c_i$ on $I_2$ imply $a_i \asymp b_i + c_i$ on $I$ and if $a_i >0$ for all $i\in I$, then  $a_i^s \asymp (b_i + c_i)^s$ for $i\in I$ and all $s\in \R$ 

Furthermore, under the above assumptions, we have
$
\sum_{i\in I} a_i < \infty \Leftrightarrow  \sum_{i\in I} (b_i +c_i) < \infty,$
and if $a_i>0$ for all $i\in I$, then
$
\sum_{i\in I} a_i^s < \infty \Leftrightarrow  \sum_{i\in I} (b_i +c_i)^s < \infty$
for all $s\in \R$.
\end{lemma}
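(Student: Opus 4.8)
The plan is to establish the two-sided bound $a_i \asymp b_i + c_i$ on all of $I$ first, and then to read off the power statement and the two summability equivalences as immediate consequences. For the \emph{upper} bound I would simply invoke the standing hypothesis $b_i, c_i \le a_i$, which gives $b_i + c_i \le 2 a_i$ for every $i \in I$ with no case distinction. For the \emph{lower} bound I would exploit the covering $I = I_1 \cup I_2$.

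Unwinding the definition of $\asymp$ for families, the relation $a_i \asymp b_i$ on $I_1$ furnishes a constant $c_1 > 0$ with $b_i \ge c_1 a_i$ for all $i \in I_1$, and $a_i \asymp c_i$ on $I_2$ furnishes $c_2 > 0$ with $c_i \ge c_2 a_i$ for all $i \in I_2$. Now fix an arbitrary $i \in I$. Since $I = I_1 \cup I_2$, either $i \in I_1$, in which case $b_i + c_i \ge b_i \ge c_1 a_i$ (discarding $c_i \ge 0$), or $i \in I_2$, in which case $b_i + c_i \ge c_i \ge c_2 a_i$ (discarding $b_i \ge 0$). Putting $c := \min\{c_1, c_2\} > 0$, we obtain $b_i + c_i \ge c\, a_i$ for every $i \in I$, and together with the upper bound this is precisely $a_i \asymp b_i + c_i$.

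The power statement then follows by raising the two-sided estimate $c\, a_i \le b_i + c_i \le 2 a_i$ to the exponent $s$, which is legitimate once $a_i > 0$ for all $i$ is assumed; this hypothesis also guarantees $b_i + c_i \ge c\, a_i > 0$, so that $(b_i + c_i)^s$ is well defined, including for negative and fractional $s$. For $s \ge 0$ the inequalities are preserved, yielding $c^s a_i^s \le (b_i + c_i)^s \le 2^s a_i^s$, while for $s < 0$ they reverse, yielding $2^s a_i^s \le (b_i + c_i)^s \le c^s a_i^s$; in either case one reads off $a_i^s \asymp (b_i + c_i)^s$ with the appropriate constants. Finally, the two summability equivalences are a direct corollary, since any two families of nonnegative numbers that are comparable up to fixed multiplicative constants have sums over $I$ that converge or diverge together; applying this to $a_i \asymp b_i + c_i$ and to $a_i^s \asymp (b_i + c_i)^s$ gives both claims.

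I do not expect a genuine obstacle here, as the statement is elementary. The only point requiring any care is the lower bound, where one must use the covering property $I = I_1 \cup I_2$ to route each index to the piece on which the corresponding term controls $a_i$, and discard the other term via its nonnegativity; the strict positivity hypothesis $a_i > 0$ enters only in the statements involving the exponent $s$, since negative powers and the implicit divisions they involve require strictly positive quantities.
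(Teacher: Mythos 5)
Your proof is correct, and since the paper offers no proof of this lemma (it is dismissed with ``easily verified''), your argument is precisely the natural verification the authors had in mind: upper bound from $b_i, c_i \le a_i$, lower bound by routing each index through $I_1$ or $I_2$ and discarding the other nonnegative term, then raising the two-sided estimate to the power $s$ (with the sign-of-$s$ case distinction) and invoking comparison of nonnegative series. Nothing is missing; the handling of strict positivity for negative exponents is exactly the point of care the statement requires.
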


\section{Embeddings of shearlet coorbit spaces into Sobolev spaces}

\label{section:Embeddings_shearlets}

Now that all preliminaries are dealt with, we can take up the task of applying the general results described in the previous sections to the special setup of shearlet dilation groups in dimension three. Following the programme developed above, we now need to use, for each group under consideration, the well spread families in Lemma \ref{lem:StandardWellSpread} and Lemma \ref{lem:ToeplitzWellSpread}, in order to apply Theorem \ref{thm:SobolevEmbedding}. We will treat the case of the standard shearlet groups in more or less full detail. By comparison, our treatment of the Toeplitz shearlet groups is less complete. Here the central estimates, and the way they are obtained, turn out to be very similar to the calculations made for the standard shearlet case, which is why we refrain from including all the details. These can be found in Ref. \refcite{RKDoktorarbeit}.

\subsection{The standard shearlet groups}\label{section:EmbeddingStandardShearlet}

In this subsection, we consider the class of standard shearlet groups $H^{\lambda_1, \lambda_2}$ from section \ref{section:ShearletGroups} and their associated coorbit spaces in dimension three. Our first task is to prepare the application of Theorem \ref{thm:SobolevEmbedding}. 

According to Theorem \ref{thm: FourierIsoCoorbitDecSpaces} and Lemma \ref{lem:StandardWellSpread}, the map
$$
\F:\Co(\mathrm{L}^{p,r}_v(\R^3 \rtimes H^{\lambda_1, \lambda_2} )) \to \mathcal{D}(\mathcal{C}^{\lambda_1, \lambda_2}, \mathrm{L}^p, \ell^r_u)
$$
is an isomorphism for $p,r\in (0,\infty]$ and any weight $v$ on $H^{\lambda_1, \lambda_2}$
that is right moderate with respect to a locally bounded weight on $H^{\lambda_1, \lambda_2}$ if $u$ is a $\mathcal{C}^{\lambda_1, \lambda_2}$-discretization of $v$. Here, we set $I:=\Z^3\times \{\pm 1\}$ and 
\begin{align*}
u_{n,m_1,m_2,\epsilon}&:=v^{(r)}\left(A^{\lambda_1, \lambda_2}_{n, m_1, m_2, \epsilon}\right)
= \abs*{\det\left(\left(A^{\lambda_1, \lambda_2}_{n, m_1, m_2, \epsilon}\right)^\invT\right)}^{\frac{1}{2} - \frac{1}{r}} v\left(\left(A^{\lambda_1, \lambda_2}_{n, m_1, m_2, \epsilon}\right)^\invT\right),
\end{align*}
where the matrices $A^{\lambda_1, \lambda_2}_{n, m_1, m_2, \epsilon}$ were defined in Lemma \ref{lem:StandardWellSpread} and the weight $v^{(r)}$ is defined as in Definition \ref{def:DecompositionWeight}.

As in Ref. \refcite{VoigtlaenderEmbeddingsOfDecSpInSobolebAndBVSPaces}, we will restrict attention to weights 
$
v^{(\alpha, \beta)}: h\mapsto \abs{h_{1,1}}^\alpha \norm*{h^\invT}^\beta,
$
where $h_{1,1}$ denotes the component in the first row and first column of $h$ and $\alpha \in \R, \beta \geq 0$. Informally speaking, this means that $\alpha$ influences how we gauge the scaling factor of the matrix and $\beta$ how we weigh the shearing part. This weight is a generalization of the weight that is considered in Ref. \refcite{DahlkeShearletCoorbitSpacesCompactlySupported} in the two-dimensional case.

For this specific weight, the associated weight $u^{(\alpha, \beta)}$ for the decomposition space is given by
\begin{align*}
u_{n,m_1,m_2,\epsilon}^{(\alpha, \beta)} &= \abs*{\det\left(\left(A^{\lambda_1, \lambda_2}_{n, m_1, m_2, \epsilon}\right)^\invT\right)}^{\frac{1}{2} - \frac{1}{r}} v^{(\alpha, \beta)}\left(\left(A^{\lambda_1, \lambda_2}_{n, m_1, m_2, \epsilon}\right)^\invT\right)\\
& = 2^{-n(1+\lambda_1+ \lambda_2)(\frac{1}{2}- \frac{1}{r})} 2^{-n\alpha} \norm*{A^{\lambda_1, \lambda_2}_{n, m_1, m_2, \epsilon}}^\beta
\end{align*}
according to Definition \ref{def:DecompositionWeight}, where one has to keep in mind that the matrices $A^{\lambda_1, \lambda_2}_{n, m_1, m_2, \epsilon}$ are the inverse transposes of a well spread family.

The weight $w^{\{q\}}$ in Theorem \ref{thm:SobolevEmbedding} is in this setting given by
\begin{align*}
w_{n,m_1,m_2,\epsilon}^{\{q\}}&:= \abs*{\det\left(A^{\lambda_1, \lambda_2}_{n, m_1, m_2, \epsilon}\right)}^{\frac{1}{p} - \frac{1}{q}}\left(1 + \norm*{A^{\lambda_1, \lambda_2}_{n, m_1, m_2, \epsilon}}^k\right)\\
&= 2^{n(1+\lambda_1+ \lambda_2)(\frac{1}{p}- \frac{1}{q})}\left(1 + \norm*{A^{\lambda_1, \lambda_2}_{n, m_1, m_2, \epsilon}}^k\right).
\end{align*}

An application of Theorem \ref{thm:SobolevEmbedding} boils down to the study of the sequence $\zeta^{\lambda_1, \lambda_2}$ defined by
\begin{align*}
\zeta_{n,m_1,m_2,\epsilon}^{\lambda_1, \lambda_2}&:= \frac{w^{\{q\}}_{n,m_1,m_2,\epsilon}}{u^{(\alpha, \beta)}_{n,m_1,m_2,\epsilon}}
=\frac{2^{n(1+\lambda_1+ \lambda_2)(\frac{1}{p}- \frac{1}{q})}\left(1 + \norm*{A^{\lambda_1, \lambda_2}_{n, m_1, m_2, \epsilon}}^k\right)}{2^{-n(1+\lambda_1+ \lambda_2)(\frac{1}{2}- \frac{1}{r})} 2^{-n\alpha} \norm*{A^{\lambda_1, \lambda_2}_{n, m_1, m_2, \epsilon}}^\beta}\\
&= 2^{n\left[\alpha + (1+\lambda_1+\lambda_2)(\frac{1}{2}-\frac{1}{r}+\frac{1}{p}-\frac{1}{q})\right]} \left(\norm*{A^{\lambda_1, \lambda_2}_{n, m_1, m_2, \epsilon}}^{-\beta} + \norm*{A^{\lambda_1, \lambda_2}_{n, m_1, m_2, \epsilon}}^{k-\beta}\right).
\end{align*}

More precisely, we want to characterize $\zeta^{\lambda_1, \lambda_2}\in \ell^\theta(I)$ for
$I=\Z^3\times\{\pm 1\}$
and $\theta\in (0,\infty]$. Since $\zeta_{n,m_1,m_2, 1}^{\lambda_1, \lambda_2} = \zeta_{n,m_1,m_2, -1}^{\lambda_1, \lambda_2}\geq 0$, it is sufficient to characterize $(\zeta_{n,m_1,m_2,1}^{\lambda_1, \lambda_2})_{n,m_1,m_2}\in \ell^\theta(\Z^3)$. Furthermore,
$
\zeta_{n,m_1,m_2,1}^{\lambda_1, \lambda_2} = \psi_{n,m_1,m_2}^{(a, \beta)} + \psi_{n,m_1,m_2}^{(a, \beta-k)} 
$
with $\psi_{n,m_1,m_2}^{(a, \beta)}:= 2^{na}\norm*{A^{\lambda_1, \lambda_2}_{n, m_1, m_2, 1}}^{-\beta}$ and 
$a:=\alpha + (1+\lambda_1+\lambda_2)\left(\frac{1}{2}-\frac{1}{r}+\frac{1}{p}-\frac{1}{q}\right). $

Note that $\psi_{n,m_1,m_2}^{(a, b)}\geq 0$, which implies the equivalence
\begin{align}\label{eq:sequenceStandardShearlet}
\zeta^{\lambda_1, \lambda_2} \in \ell^\theta(I) \quad \Longleftrightarrow \quad \psi^{(a, \beta)},\ \psi^{(a, \beta-k)}\in \ell^\theta(\Z^3).
\end{align}

In summary, our task is reduced to the investigation of the sequence $\psi^{(a, b)}$ for $a,b \in \R$ with $b\geq 0$ and to finding a characterization in terms of $a,b, \theta, \lambda_1, \lambda_2$ for  $\psi^{(a, b)} \in \ell^\theta(\Z^3)$ with 
$a, \lambda_1,\ \lambda_2 \in \R,\ b \geq 0 \text{ and } \theta\in (0,\infty].$
Since all norms on $\GL(\R^3)$ are equivalent, and equivalent norms lead to the same conditions for membership of the considered sequences in $\ell^\theta$-spaces, from now on, we will consider the norm
$\norm*{h}=\sum_{1\leq i,j \leq 3} |h_{i,j}|,$
where $h_{i,j}$ are the components of the matrix $h\in \GL(\R^3)$.
The relation
$\psi^{(a,b)}_{0,m_1,m_2} = \norm*{A^{\lambda_1, \lambda_2}_{0, m_1, m_2, 1}}^{-b} = (3+|m_1|+ |m_2|)^{-b}$
shows that $b\geq 0$ is necessary for $\psi^{(a,b)} \in \ell^\theta(\Z^3) \subset \ell^\infty(\Z^3)$. This is also the reason why we restrict the weights $v^{(\alpha, \beta)}$ to the range of parameters $\alpha\in \R$ and $\beta\geq 0$.

In the following subsections, we will focus on the case $\lambda_1 \leq \lambda_2$ and associated sub-cases. Since 
$\norm*{A^{\lambda_1, \lambda_2}_{n, m_1, m_2, 1}}= 2^n + 2^{n\lambda_1} + 2^{n\lambda_2} + 2^{\lambda_1}|m_1| + 2^{\lambda_2}|m_2|$
is invariant under a change of the index $1$ to $2$ and vice versa, the results for the case $\lambda_2 \leq \lambda_1$ follow by interchanging $\lambda_1$ and $\lambda_2$ in the conditions of the appropriate sub-case.

\subsection{Standard Shearlet Group: case $1\leq \lambda_1 \leq \lambda_2$}\label{subsec:StandardShearletCase}

The general proof strategy consists in determining the asymptotic behavior of $\norm*{A^{\lambda_1, \lambda_2}_{n, m_1, m_2, 1}}$ on suitable subsets of $\Z^3$, and then combining the different conditions on the exponents and weights that arise from the requirement that summation over each subset converges. More precisely, we break $\mathbb{Z}^3$ down into two discrete half spaces and then further into eight octants, and study summation over these subsets. 

\begin{definition}
We define the subsets of $\Z^3$
\begin{align*}
M_1^+&:=\Set{(n,m_1,m_2) | n\geq 0,\ 2^{n\lambda_1}|m_1| \leq  2^{n\lambda_2}|m_2|,\ 2^{n\lambda_2} \leq 2^{n\lambda_2}|m_2|}\\
M_2^+&:=\Set{(n,m_1,m_2) | n\geq 0,\ 2^{n\lambda_1}|m_1| \leq  2^{n\lambda_2}|m_2|,\ 2^{n\lambda_2}|m_2|\leq  2^{n\lambda_2}}\\
M_3^+&:=\Set{(n,m_1,m_2) | n\geq 0,\ 2^{n\lambda_2}|m_2| \leq  2^{n\lambda_1}|m_1|,\ 2^{n\lambda_2} \leq2^{n\lambda_1}|m_1|}\\
M_4^+&:=\Set{(n,m_1,m_2) | n\geq 0,\ 2^{n\lambda_2}|m_2| \leq  2^{n\lambda_1}|m_1|,\ 2^{n\lambda_1}|m_1| \leq 2^{n\lambda_2}}
\shortintertext{and}
M_1^-&:=\Set{(n,m_1,m_2) | n < 0,\ 2^{n\lambda_1}|m_1| \leq  2^{n\lambda_2}|m_2|,\ 2^{n} \leq 2^{n\lambda_2}|m_2|}\\
M_2^-&:=\Set{(n,m_1,m_2) | n <0 ,\ 2^{n\lambda_1}|m_1| \leq  2^{n\lambda_2}|m_2|,\ 2^{n\lambda_2}|m_2|\leq  2^{n}}\\
M_3^-&:=\Set{(n,m_1,m_2) | n < 0,\ 2^{n\lambda_2}|m_2| \leq  2^{n\lambda_1}|m_1|,\ 2^{n} \leq2^{n\lambda_1}|m_1|}\\
M_4^-&:=\Set{(n,m_1,m_2) | n < 0,\ 2^{n\lambda_2}|m_2| \leq  2^{n\lambda_1}|m_1|,\ 2^{n\lambda_1}|m_1| \leq 2^{n}}.
\end{align*}
Furthermore, we let $M^{\dagger} = \bigcup_{i=1,\ldots,4} M_i^\dagger$, for $\dagger \in \{ +,- \}$. 
\end{definition}

Note that the union of all sets in the above definition is $\Z^3$. We introduce these sets because we want to use 
$\psi^{(a, b)}\in \ell^\theta(\Z^3) \Longleftrightarrow \psi^{(a, b)}\in \ell^\theta(M^{\dagger}_i)\text{ for all } \dagger \in \{\pm\}, i\in \{1,\ldots, 4\}.$

\begin{lemma}\label{lem:AsymptoticStandard1}
The following asymptotic relations hold for $(n,m_1,m_2)$ in the given sets:
\begin{align*}
\norm*{A^{\lambda_1, \lambda_2}_{n, m_1, m_2, 1}} \asymp 
\begin{cases}
2^{n\lambda_2}|m_2|,& \text{for } (n,m_1,m_2) \in M_1^+\\ 
2^{n\lambda_2}, 	&\text{for } (n,m_1,m_2) \in M_2^+\\
2^{n\lambda_1}|m_1|,&\text{for } (n,m_1,m_2) \in M_3^+\\
2^{n\lambda_2},		&\text{for } (n,m_1,m_2) \in M_4^+\\
2^{n\lambda_2}|m_2|,&\text{for } (n,m_1,m_2) \in M_1^-\\
2^{n},				&\text{for } (n,m_1,m_2) \in M_2^-\\
2^{n\lambda_1}|m_1|,&\text{for } (n,m_1,m_2) \in M_3^-\\
2^{n},				&\text{for } (n,m_1,m_2) \in M_4^-
\end{cases}
\end{align*}
\end{lemma}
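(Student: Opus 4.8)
The proof rests on a single elementary principle: with the sum-of-moduli norm fixed above, the quantity $\norm*{A^{\lambda_1, \lambda_2}_{n, m_1, m_2, 1}}$ is a sum of the five \emph{nonnegative} terms $2^n$, $2^{n\lambda_1}$, $2^{n\lambda_2}$, $2^{n\lambda_1}\abs{m_1}$ and $2^{n\lambda_2}\abs{m_2}$. For any finite family of nonnegative reals the sum is comparable to its maximum, with constants depending only on the cardinality of the family; here the family always has exactly five members, so $\norm*{A^{\lambda_1, \lambda_2}_{n, m_1, m_2, 1}}$ is $\asymp$ the largest of these five terms, with constants $1$ and $5$ that are uniform in $(n, m_1, m_2)$. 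Thus the whole statement reduces to identifying, on each of the eight sets $M_i^\dagger$, which summand is maximal and checking that it coincides with the term named in the lemma.

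The structural fact that organizes the bookkeeping is that the three ``pure'' summands $2^n$, $2^{n\lambda_1}$, $2^{n\lambda_2}$ are totally ordered as soon as the sign of $n$ is fixed: under the standing assumption $1 \le \lambda_1 \le \lambda_2$ one has $2^n \le 2^{n\lambda_1} \le 2^{n\lambda_2}$ when $n \ge 0$, and the reversed chain $2^{n\lambda_2} \le 2^{n\lambda_1} \le 2^n$ when $n < 0$. This is precisely why the decomposition is carried out along the sign of $n$: on every octant $M_i^+$ the dominant pure term is $2^{n\lambda_2}$, while on every $M_i^-$ it is $2^n$. Combining this ordering with the two defining inequalities of each octant --- one comparing the shear terms $2^{n\lambda_1}\abs{m_1}$ and $2^{n\lambda_2}\abs{m_2}$ with each other, the other comparing the surviving shear term with the dominant pure term --- determines the global maximum in each case.

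I would then simply run through the eight octants. To illustrate the mechanism: on $M_1^+$ the conditions $2^{n\lambda_1}\abs{m_1} \le 2^{n\lambda_2}\abs{m_2}$ and $2^{n\lambda_2} \le 2^{n\lambda_2}\abs{m_2}$, together with $2^n \le 2^{n\lambda_1} \le 2^{n\lambda_2}$ from $n \ge 0$, show that all five summands are bounded by $2^{n\lambda_2}\abs{m_2}$, whence $\norm*{A^{\lambda_1, \lambda_2}_{n, m_1, m_2, 1}} \asymp 2^{n\lambda_2}\abs{m_2}$; dually, on $M_2^-$ the conditions $2^{n\lambda_1}\abs{m_1} \le 2^{n\lambda_2}\abs{m_2} \le 2^n$ and $2^{n\lambda_2} \le 2^{n\lambda_1} \le 2^n$ (the latter from $n < 0$) make $2^n$ the maximal summand, giving $\norm*{A^{\lambda_1, \lambda_2}_{n, m_1, m_2, 1}} \asymp 2^n$. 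The remaining six octants are verified by the identical reading-off procedure.

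No genuine analytic difficulty arises; the only thing demanding care is the combinatorial bookkeeping, namely checking in each octant that the two defining inequalities, in conjunction with the sign of $n$ and the ordering $1 \le \lambda_1 \le \lambda_2$, really do dominate all four non-distinguished summands by the claimed one. I would pay particular attention to the ``interior'' octants $M_2^\dagger$ and $M_4^\dagger$, whose distinguished term is a pure power rather than a shear term: there the chain of dominations passes through both a shear-versus-shear and a shear-versus-pure comparison, so one must make sure that both defining inequalities are actually used.
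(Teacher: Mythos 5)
Your proof is correct and takes essentially the same approach as the paper: the paper's proof likewise uses the sign of $n$ together with $1\le\lambda_1\le\lambda_2$ to reduce the three pure powers to the dominant one ($2^{n\lambda_2}$ for $n\ge 0$, $2^{n}$ for $n<0$), and then notes that the octants $M_i^{\pm}$ are defined exactly so that one remaining summand dominates, i.e.\ the sum of nonnegative terms is comparable to its maximum. Your version merely spells out the case-by-case domination the paper leaves implicit.
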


\begin{proof}
We have
$\norm*{A^{\lambda_1, \lambda_2}_{n, m_1, m_2, 1}}= 2^n + 2^{n\lambda_1} + 2^{n\lambda_2} + 2^{n\lambda_1}|m_1| + 2^{n\lambda_2}|m_2|.$
Hence, if $n\geq0$, the inequality 
$2^{n\lambda_2} \leq 2^n + 2^{n\lambda_1} + 2^{n\lambda_2} \leq 3\cdot 2^{n\lambda_2} $
shows that $$\norm*{A^{\lambda_1, \lambda_2}_{n, m_1, m_2, 1}}\asymp 2^{n\lambda_2} + 2^{\lambda_1}|m_1| + 2^{\lambda_2}|m_2|$$ holds and for $n<0$, the inequality $2^{n} \leq 2^n + 2^{n\lambda_1} + 2^{n\lambda_2} \leq 3\cdot 2^{n}$ implies $\norm*{A^{\lambda_1, \lambda_2}_{n, m_1, m_2, 1}}\asymp 2^{n} + 2^{\lambda_1}|m_1| + 2^{\lambda_2}|m_2|$. The rest follows by observing that the sets  $M_i^\dagger$ for $i\in \{1,\ldots, 4\}$ and $\dagger\in\{\pm\}$ are precisely chosen in a way to ensure that one term dominates the other terms in these asymptotic norm expressions.
\end{proof}

The next step consists in using these asymptotic relations in order to characterize $\psi^{(a, b)}\in \ell^\theta(\Z^3)$. We first consider $\theta=\infty$.

\begin{lemma}\label{lem:ThetaInfty1Lambda1Lambda2}
For $1\leq \lambda_1 \leq \lambda_2$, $a\in \R$ and $b\geq 0$, we have
$$
\psi^{(a, b)}\in \ell^\infty(\Z^3) \Longleftrightarrow b \leq a \leq b\lambda_2.
$$
\end{lemma}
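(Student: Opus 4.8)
The plan is to exploit the decomposition $\Z^3 = \bigcup_{\dagger\in\{\pm\}}\bigcup_{i=1}^4 M_i^\dagger$ together with the equivalence
$$\psi^{(a,b)}\in\ell^\infty(\Z^3) \iff \psi^{(a,b)}\in\ell^\infty(M_i^\dagger)\text{ for all }\dagger\in\{\pm\},\ i\in\{1,\ldots,4\},$$
so that it suffices to determine, octant by octant, exactly when $\sup_{M_i^\dagger}\psi^{(a,b)}<\infty$. On each octant I would substitute the corresponding asymptotic value of $\norm*{A^{\lambda_1,\lambda_2}_{n,m_1,m_2,1}}$ supplied by Lemma \ref{lem:AsymptoticStandard1}, turning $\psi^{(a,b)}_{n,m_1,m_2}=2^{na}\norm*{A^{\lambda_1,\lambda_2}_{n,m_1,m_2,1}}^{-b}$ into an explicit expression in $2^n$ and $|m_1|,|m_2|$, and then read off the condition on $a,b$ under which the supremum is finite.

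The heart of the argument is the computation of the supremum over each octant. Since $b\geq0$, the factor $|m_j|^{-b}$ is nonincreasing in $|m_j|$, so for fixed $n$ the supremum is attained at the smallest admissible value of the relevant $|m_j|$, which is dictated by the defining inequalities of $M_i^\dagger$. In the \emph{unconstrained} octants $M_2^\pm,M_4^\pm$ the norm is $\asymp 2^{n\lambda_2}$ (resp. $\asymp 2^n$) independently of $m$, so $\psi^{(a,b)}\asymp 2^{n(a-b\lambda_2)}$ (resp. $2^{n(a-b)}$), and boundedness over $n\geq0$ (resp. $n<0$) reads $a\le b\lambda_2$ (resp. $a\geq b$). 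In the \emph{constrained} octants the defining inequality forces $|m_j|$ to grow with $n$: on $M_1^+,M_3^+$ one has $|m_j|\gtrsim 2^{n(\lambda_2-\lambda_1)}$ (with the trivial bound $|m_2|\ge 1$ on $M_1^+$), and inserting the extremal value makes the exponents telescope,
$$2^{n(a-b\lambda_1)}\bigl(2^{n(\lambda_2-\lambda_1)}\bigr)^{-b}=2^{n(a-b\lambda_2)},$$
again giving $a\le b\lambda_2$; while on $M_1^-,M_3^-$ the constraint $|m_j|\gtrsim 2^{-n(\lambda_j-1)}$ yields
$$2^{n(a-b\lambda_j)}\bigl(2^{-n(\lambda_j-1)}\bigr)^{-b}=2^{n(a-b)},$$
giving $a\geq b$. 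Collecting the eight conditions, the four ``$+$'' octants all produce $a\le b\lambda_2$ and the four ``$-$'' octants all produce $a\geq b$, whose conjunction is exactly $b\le a\le b\lambda_2$.

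The main technical point—rather than a genuine obstacle—is the bookkeeping of the extremal $|m_j|$: one must verify that replacing the real-valued threshold $2^{n(\lambda_2-\lambda_1)}$ or $2^{-n(\lambda_j-1)}$ by the nearest admissible integer does not alter the $\asymp$-class. This is precisely where $1\le\lambda_1\le\lambda_2$ enters: in every ``$+$'' octant the forcing exponent $n(\lambda_2-\lambda_1)$ is $\geq0$ for $n\geq0$, and in every ``$-$'' octant $-n(\lambda_j-1)$ is $\geq0$ for $n<0$, so the thresholds are $\geq1$ and Lemma \ref{lem:AsymptoticLemma} applies to justify $\lceil\cdot\rceil^{s}\asymp(\cdot)^{s}$. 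For the necessity direction it is in fact cleanest to bypass rounding altogether by exhibiting explicit blow-up sequences (e.g. $(n,0,1)\in M_1^+$ forces $a\le b\lambda_2$, and analogous one-parameter families along the binding constraint in the ``$-$'' octants force $a\geq b$), whereas sufficiency follows directly from the termwise upper bounds obtained above.
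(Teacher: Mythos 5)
Your proposal is correct and follows essentially the same route as the paper's proof: the same octant decomposition of $\Z^3$, the same substitution of the asymptotics from Lemma \ref{lem:AsymptoticStandard1}, the same extremal-index argument (smallest admissible $|m_j|$, with Lemma \ref{lem:AsymptoticLemma} justifying the integer rounding precisely because $1\leq\lambda_1\leq\lambda_2$ makes the thresholds at least $1$), and the same collection of the conditions $a\leq b\lambda_2$ from the ``$+$'' octants and $a\geq b$ from the ``$-$'' octants. Your suggestion to establish necessity via explicit blow-up sequences is merely a repackaging of the paper's supremum computations, not a genuinely different argument.
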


\begin{proof}
We will determine successively necessary and sufficient conditions for   $\psi^{(a, b)}\in \ell^\infty(M_i^\dagger)$ for $i\in \{1,\ldots, 4\}$ and $\dagger\in\{\pm\}$.
\begin{enumerate}[i)]
\item The set $M_1^+$: In this case, we have
\begin{align*}
&\norm*{\psi^{(a, b)}}_{\ell^\infty(M^+_1)} =\sup_{(n,m_1,m_2) \in M^+_1 } 2^{na}\norm*{A^{\lambda_1, \lambda_2}_{n,m_1,m_2,1}}^{-b} 
	 \asymp \sup_{(n,m_1,m_2) \in M^+_1 } 2^{na} 2^{-bn\lambda_2}|m_2|^{-b}\\
    & = \sup_{n\geq 0}
    		\sup_{\substack{2^{n\lambda_1}|m_1| \leq 2^{n\lambda_2}|m_2| \\ 1 \leq |m_2|}} 2^{n(a-b\lambda_2)}|m_2|^{-b}
    \stackrel{(\ast)}{=} \sup_{n\geq 0} 2^{n(a-b\lambda_2)} < \infty
\end{align*}
if and only if $a-b\lambda_2 \leq 0$. In $(\ast)$ we used that the expression $|m_2|^{-b}$ achieves its maximum with respect to $(m_1,m_2)$ with the given restriction for the choice $(m_1, m_2) = (0,1)$.

\item The set $M_2^+$: In this case, we have
\begin{align*}
&\norm*{\psi^{(a, b)}}_{\ell^\infty(M^+_2)} =\sup_{(n,m_1,m_2) \in M^+_2 } 2^{na}\norm*{A^{\lambda_1, \lambda_2}_{n,m_1,m_2,1}}^{-b} 
	 \asymp \sup_{(n,m_1,m_2) \in M^+_2 } 2^{na} 2^{-nb\lambda_2}\\
    & = \sup_{n\geq 0}\sup_{\substack{2^{n\lambda_1}|m_1| \leq 2^{n\lambda_2}|m_2| \\ 2^{n\lambda_2}|m_2| \leq 2^{n\lambda_2} }} 2^{n(a-b\lambda_2)}
    \stackrel{(\ast)}{=} \sup_{n\geq 0} 2^{n(a-b\lambda_2)} < \infty
\end{align*}
if and only if $a-b\lambda_2 \leq 0$. In $(\ast)$ we used that the expression we take the supremum of is independent of $(m_1, m_2)$.

\item The set $M_3^+$:  In this case, we have
\begin{align*}
&\norm*{\psi^{(a, b)}}_{\ell^\infty(M^+_3)} =\sup_{(n,m_1,m_2) \in M^+_3 } 2^{na}\norm*{A^{\lambda_1, \lambda_2}_{n,m_1,m_2,1}}^{-b} 
	\asymp \sup_{(n,m_1,m_2) \in M^+_3 } 2^{na} 2^{-bn\lambda_1}|m_1|^{-b}\\
    & = \sup_{n\geq 0}
    		\sup_{\substack{2^{n\lambda_2}|m_2| \leq 2^{n\lambda_1}|m_1| \\ 2^{n(\lambda_2-\lambda_1)} \leq |m_1|}} 2^{n(a-b\lambda_1)}|m_1|^{-b}
     \stackrel{(\ast)}{=} \sup_{n\geq 0} 2^{n(a-b\lambda_1)} \lceil 2^{n(\lambda_2-\lambda_1)} \rceil^{-b}\\
    & \stackrel{\ref{lem:AsymptoticLemma}}{\asymp} \sup_{n\geq 0} 2^{n(a-b\lambda_1)}2^{-bn(\lambda_2-\lambda_1)} =   \sup_{n\geq 0}2^{n(a-b\lambda_2)} < \infty
\end{align*}
if and only if $a-b\lambda_2 \leq 0$. In $(\ast)$ we used that the expression $|m_1|^{-b}$ achieves its maximum with respect to $(m_1,m_2)$ with the given restriction for the choice $(m_1, m_2) = (\lceil 2^{n(\lambda_2-\lambda_1)} \rceil,0 )$.

\item The set $M_4^+$: In this case, we have
\begin{align*}
&\norm*{\psi^{(a, b)}}_{\ell^\infty(M^+_4)} =\sup_{(n,m_1,m_2) \in M^+_4 } 2^{na}\norm*{A^{\lambda_1, \lambda_2}_{n,m_1,m_2,1}}^{-b} 
	\asymp \sup_{(n,m_1,m_2) \in M^+_4 } 2^{na} 2^{-nb\lambda_2}\\
    & = \sup_{n\geq 0}\sup_{\substack{2^{n\lambda_2}|m_2| \leq 2^{n\lambda_1}|m_1| \\ 2^{n\lambda_1}|m_1| \leq 2^{n\lambda_2} }} 2^{n(a-b\lambda_2)}
    \stackrel{(\ast)}{=} \sup_{n\geq 0} 2^{n(a-b\lambda_2)} < \infty
\end{align*}
if and only if $a-b\lambda_2 \leq 0$. In $(\ast)$ we used that the expression we take the supremum of is independent of $(m_1, m_2)$.

\item The set $M^+$: The previous four cases and Lemma \ref{lem:AsymptoticLemma2} combined with Lemma \ref{lem:AsymptoticStandard1} lead to
\begin{align*}
\norm*{\psi^{(a, b)}}_{\ell^\infty(M^+)}
\asymp \sup_{(n,m_1,m_2) \in M^+}\abs*{2^{na} \left(2^{n\lambda_2} + 2^{n\lambda_1}|m_1| + 2^{n\lambda_2}|m_2|\right)^{-b}} < \infty
\end{align*}
if and only if $a-b\lambda_2 \leq 0$. 

\item The set $M_1^-$: In this case, we have
\begin{align*}
&\norm*{\psi^{(a, b)}}_{\ell^\infty(M^-_1)} =\sup_{(n,m_1,m_2) \in M^-_1 } 2^{na}\norm*{A^{\lambda_1, \lambda_2}_{n,m_1,m_2,1}}^{-b} 
	\asymp \sup_{(n,m_1,m_2) \in M^-_1 } 2^{na} 2^{-bn\lambda_2}|m_2|^{-b}\\
    & = \sup_{n< 0}
    		\sup_{\substack{2^{n\lambda_1}|m_1| \leq 2^{n\lambda_2}|m_2| \\ 2^n \leq 2^{n\lambda_2}|m_2| }} 2^{n(a-b\lambda_2)}|m_2|^{-b}
    \stackrel{(\ast)}{=}\sup_{n< 0}2^{n(a-b\lambda_2)}\lceil 2^{n(1-\lambda_2)}\rceil^{-b} \\
    & \stackrel{\ref{lem:AsymptoticLemma}}{\asymp} \sup_{n< 0} 2^{n(a-b\lambda_2)} 2^{-bn(1-\lambda_2)} =\sup_{n<0}2^{n(a-b)} < \infty
\end{align*}
if and only if $a-b \geq 0$. In $(\ast)$ we used that the expression $|m_2|^{-b}$ achieves its maximum with respect to $(m_1,m_2)$ with the given restriction for the choice $(m_1, m_2) = (0,\lceil 2^{n(1-\lambda_2)}\rceil)$.

\item The set $M_2^-$: In this case, we have
\begin{align*}
&\norm*{\psi^{(a, b)}}_{\ell^\infty(M^-_2)} =\sup_{(n,m_1,m_2) \in M^-_2 } 2^{na}\norm*{A^{\lambda_1, \lambda_2}_{n,m_1,m_2,1}}^{-b} 
	\asymp \sup_{(n,m_1,m_2) \in M^-_2 } 2^{na} 2^{-nb}\\
    & = \sup_{n\geq 0}\sup_{\substack{2^{n\lambda_1}|m_1| \leq 2^{n\lambda_2}|m_2| \\ 2^{n\lambda_2}|m_2| \leq 2^{n} }} 2^{n(a-b)}
    \stackrel{(\ast)}{=} \sup_{n\geq 0} 2^{n(a-b)} < \infty
\end{align*}
if and only if $a-b \geq 0$. In $(\ast)$ we used that the expression we take the supremum of is independent of $(m_1, m_2)$.

\item The set $M_3^-$: In this case, we have
\begin{align*}
&\norm*{\psi^{(a, b)}}_{\ell^\infty(M^-_3)} =\sup_{(n,m_1,m_2) \in M^-_3 } 2^{na}\norm*{A^{\lambda_1, \lambda_2}_{n,m_1,m_2,1}}^{-b} 
	 \asymp \sup_{(n,m_1,m_2) \in M^-_3 } 2^{na} 2^{-bn\lambda_1}|m_1|^{-b}\\
    & = \sup_{n< 0}
    		\sup_{\substack{2^{n\lambda_2}|m_2| \leq 2^{n\lambda_1}|m_1| \\ 2^n \leq 2^{n\lambda_1}|m_1| }} 2^{n(a-b\lambda_1)}|m_1|^{-b}
     \stackrel{(\ast)}{=}\sup_{n< 0}2^{n(a-b\lambda_1)}\lceil 2^{n(1-\lambda_1)}\rceil^{-b}\\
   &\stackrel{\ref{lem:AsymptoticLemma}}{\asymp} \sup_{n< 0} 2^{n(a-b\lambda_1)} 2^{-bn(1-\lambda_1)} =2^{n(a-b)} < \infty
\end{align*}
if and only if $a-b \geq 0$. In $(\ast)$ we used that the expression $|m_1|^{-b}$ achieves its maximum with respect to $(m_1,m_2)$ with the given restriction for the choice $(m_1, m_2) = (\lceil 2^{n(1-\lambda_1)}\rceil, 0)$.

\item The set $M_4^-$: In this case, we have
\begin{align*}
&\norm*{\psi^{(a, b)}}_{\ell^\infty(M^-_4)} =\sup_{(n,m_1,m_2) \in M^-_4} 2^{na}\norm*{A^{\lambda_1, \lambda_2}_{n,m_1,m_2,1}}^{-b} 
	 \asymp \sup_{(n,m_1,m_2) \in M^-_4} 2^{na} 2^{-nb}\\
    & = \sup_{n\geq 0}\sup_{\substack{2^{n\lambda_2}|m_2| \leq 2^{n\lambda_1}|m_1|  2^{n\lambda_1}|m_1| \leq 2^{n} }} 2^{n(a-b)}
    \stackrel{(\ast)}{=} \sup_{n\geq 0} 2^{n(a-b)} < \infty
\end{align*}
if and only if $a-b \geq 0$. In $(\ast)$ we used that the expression we take the supremum of is independent of $(m_1, m_2)$.

\item The set $M^-$: The previous four cases and Lemma \ref{lem:AsymptoticLemma2} combined with Lemma \ref{lem:AsymptoticStandard1} lead to
\begin{align*}
&\norm*{\psi^{(a, b)}}_{\ell^\infty(M^-)}
\asymp \sup_{(n,m_1,m_2) \in M^-}\abs*{2^{na} \left(2^{n} + 2^{n\lambda_1}|m_1| + 2^{n\lambda_2}|m_2|\right)^{-b}} < \infty
\end{align*}
if and only if $a-b\geq 0$.  

\item The set $\Z^3$: To summarize our results, we have 
$\psi^{(a, b)}\in \ell^\infty(\Z^3)  < \infty$
if and only if $b \leq a \leq b\lambda_2$.
\end{enumerate}
\end{proof}

The next step is to consider the remaining $\theta<\infty$.

\begin{lemma}\label{lem:ThetaNotInfty1Lambda1Lambda2}
For $1\leq \lambda_1 \leq \lambda_2$, $a\in \R$ and $b\geq 0$, and $\theta \in (0,\infty)$, we have 
$\psi^{(a, b)}\in \ell^\theta(\Z^3)$
if and only if $b\theta > 2$ and 
$
\frac{\lambda_1 + \lambda_2 -2}{\theta} + b < a < \frac{\lambda_1 - \lambda_2}{\theta} + b\lambda_2.
$
\end{lemma}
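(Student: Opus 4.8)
The plan is to mirror the strategy of the previous lemma: decompose $\Z^3 = \bigcup_{\dagger,i} M_i^\dagger$ and use that $\psi^{(a,b)} \in \ell^\theta(\Z^3)$ if and only if $\psi^{(a,b)} \in \ell^\theta(M_i^\dagger)$ for every piece. On each octant, Lemma \ref{lem:AsymptoticStandard1} reduces $\norm*{A^{\lambda_1,\lambda_2}_{n,m_1,m_2,1}}^{-b}$ to an explicit power of $2^n$ times a power of $|m_1|$ or $|m_2|$, so the summability question becomes an explicit multi-index sum that can be evaluated with the asymptotic relation $\sum_{m=m_0}^\infty m^\rho \asymp m_0^{1+\rho}$ from Lemma \ref{lem:AsymptoticLemma} (valid only for $\rho < -1$, which is where the condition $b\theta > 2$ will enter).

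First I would treat a representative positive octant, say $M_1^+$, where $\norm*{A} \asymp 2^{n\lambda_2}|m_2|$ and the constraints are $n \geq 0$, $2^{n\lambda_1}|m_1| \leq 2^{n\lambda_2}|m_2|$ and $|m_2| \geq 1$. The sum factors as
\begin{align*}
\sum_{(n,m_1,m_2)\in M_1^+} \left(\psi^{(a,b)}_{n,m_1,m_2}\right)^\theta
\asymp \sum_{n\geq 0} 2^{n(a-b\lambda_2)\theta} \sum_{|m_2|\geq 1} |m_2|^{-b\theta} \sum_{|m_1| \leq 2^{n(\lambda_2-\lambda_1)}|m_2|} 1.
\end{align*}
The inner sum over $m_1$ contributes a factor $\asymp 1 + 2^{n(\lambda_2-\lambda_1)}|m_2|$; summing over $m_2$ (using $\sum |m_2|^{\rho} \asymp 1$ precisely when $-b\theta < -1$) produces the threshold $b\theta > 1$ from the $m_2$-decay and, after combining with the $m_1$-count, the sharper threshold $b\theta > 2$. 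One then reads off the geometric-series condition on $n$, which yields the lower bound $\frac{\lambda_1+\lambda_2-2}{\theta} + b < a$ for the $M^+$ pieces. The negative octants $M_i^-$ are handled identically, except that $n < 0$ reverses the direction of the geometric series and the dominant term becomes $2^n$ rather than $2^{n\lambda_2}$; this produces the upper bound $a < \frac{\lambda_1-\lambda_2}{\theta} + b\lambda_2$.

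After each of the eight octants is analyzed, I would combine them exactly as in Lemma \ref{lem:ThetaInfty1Lambda1Lambda2}, using Lemma \ref{lem:AsymptoticLemma2} to pass from the octant-wise asymptotics back to the full norm $2^n + 2^{n\lambda_1}|m_1| + 2^{n\lambda_2}|m_2|$ (positive $n$) and $2^n + 2^{n\lambda_1}|m_1| + 2^{n\lambda_2}|m_2|$ (negative $n$), and take the intersection of the resulting constraints. The main obstacle I anticipate is \emph{bookkeeping the double summation over $(m_1,m_2)$} correctly on each octant: the counting factor from the lattice points $m_1$ (or $m_2$) constrained by an $n$-dependent inequality must be tracked carefully, since it is exactly this factor that sharpens the naive threshold $b\theta > 1$ to $b\theta > 2$ and contributes the extra $\frac{\lambda_1-\lambda_2}{\theta}$ and $\frac{\lambda_1+\lambda_2-2}{\theta}$ terms in the exponent bounds. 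The case distinctions $1 \leq \lambda_1 \leq \lambda_2$ guarantee that the relevant exponents $\lambda_2 - \lambda_1 \geq 0$ and $\lambda_j - 1 \geq 0$ have fixed sign, which is what makes the ceiling/floor manipulations via Lemma \ref{lem:AsymptoticLemma} legitimate; I would check at the outset that these sign conditions hold so that every application of the asymptotic lemma is justified.
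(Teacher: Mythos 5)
Your plan follows the paper's proof almost verbatim: decompose $\Z^3$ into the octants $M_i^\pm$, use Lemma \ref{lem:AsymptoticStandard1} to replace $\norm*{A^{\lambda_1,\lambda_2}_{n,m_1,m_2,1}}$ by its dominant term on each octant, count the lattice points admitted by the $n$-dependent constraints, and combine the octant conditions via Lemma \ref{lem:AsymptoticLemma2}. Your identification of the mechanism that sharpens the naive threshold $b\theta>1$ to $b\theta>2$ (the extra factor $|m_2|$ produced by the $m_1$-count) is exactly what happens in the paper's computation for $M_1^+$.

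However, you assign the two inequalities to the wrong half-spaces, and this is a genuine sign error in the convergence analysis rather than a labeling slip. Executing your own factorization on $M_1^+$ gives
\[
\sum_{n\ge 0} 2^{n\left[\theta(a-b\lambda_2)+\lambda_2-\lambda_1\right]}\cdot\sum_{m_2\ge 1} m_2^{1-b\theta},
\]
and a geometric series over $n\ge 0$ converges precisely when its exponent is \emph{negative}, i.e. $\theta(a-b\lambda_2)+\lambda_2-\lambda_1<0$; this is the \emph{upper} bound $a<\frac{\lambda_1-\lambda_2}{\theta}+b\lambda_2$, not the lower bound you claim the $M^+$ pieces yield. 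Symmetrically, on $M^-$ the dominant term is $2^n$ (as you correctly note), but there the constraint $2^n\le 2^{n\lambda_2}|m_2|$ forces the $m_2$-summation to start at $\lceil 2^{n(1-\lambda_2)}\rceil$, which grows as $n\to-\infty$; applying $\sum_{m=m_0}^\infty m^\rho \asymp m_0^{1+\rho}$ from Lemma \ref{lem:AsymptoticLemma} turns this $n$-dependent starting point into the factor $2^{n(1-\lambda_2)(2-b\theta)}$, and the resulting series $\sum_{n<0}2^{n\left[\theta(a-b)+2-\lambda_1-\lambda_2\right]}$ converges iff its exponent is \emph{positive}, giving the \emph{lower} bound $a>\frac{\lambda_1+\lambda_2-2}{\theta}+b$. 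So the positive octants produce the upper bound and the negative octants the lower bound, exactly opposite to your attribution; moreover the negative octants are not ``handled identically,'' since the $n$-dependent lower summation limit is an additional mechanism with no counterpart in $M_1^+$. Because the lemma's condition is the intersection of all octant conditions, a careful execution of your plan would still land on the correct statement, but as written the predicted outcome of each half-space computation is wrong, which indicates the convergence criteria for $\sum_{n\ge 0}$ versus $\sum_{n<0}$ were applied backwards.
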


\begin{proof}
We proceed as in the previous proof:
\begin{enumerate}[i)]
\item The set $M_1^+$: In this case, we have
\begin{align*}
&\norm*{\psi^{(a, b)}}_{\ell^\theta(M^+_1)}^\theta = \sum_{(n,m_1,m_2) \in M^+_1}\abs*{2^{na}\norm*{A^{\lambda_1, \lambda_2}_{n,m_1,m_2,1}}^{-b}}^\theta
	 \asymp  \sum_{(n,m_1,m_2) \in M^+_1} \left(2^{na} 2^{-bn\lambda_2}|m_2|^{-b}\right)^\theta\\
    & = \sum_{n\geq 0}\sum_{\substack{1 \leq |m_2| \\ 2^{n\lambda_1} |m_1| \leq 2^{n\lambda_2} |m_2|}} 2^{\theta n(a-b\lambda_2)}|m_2|^{-b\theta}
     \asymp \sum_{n= 0}^\infty\sum_{m_2=1}^\infty \sum_{m_1=0}^{\lfloor 2^{n(\lambda_2 - \lambda_1)} m_2 \rfloor} 2^{\theta n(a-b\lambda_2)} m_2^{-b\theta}\\
    & = \sum_{n= 0}^\infty\sum_{m_2=1}^\infty \left(\lfloor 2^{n(\lambda_2 - \lambda_1)} m_2 \rfloor + 1\right) 2^{\theta n(a-b\lambda_2)} m_2^{-b\theta}
     \stackrel{(\ref{lem:AsymptoticLemma})}{\asymp} \sum_{n= 0}^\infty\sum_{m_2=1}^\infty 2^{n(\lambda_2 - \lambda_1)} m_2 2^{\theta n(a-b\lambda_2)} m_2^{-b\theta}\\
     & = \left(\sum_{n= 0}^\infty  2^{n( \theta( a-b\lambda_2) + \lambda_2 - \lambda_1)}\right) \left(\sum_{m_2=1}^\infty m_2^{1-b\theta}\right) < \infty\\
\end{align*}
if and only if $b\theta>2$ and $\theta( a-b\lambda_2) + \lambda_2 - \lambda_1<0$.

\item The set $M_2^+$: We can decompose the set $M_2^+$ as 
$M_2^+ = M_2^{+{''}} \cup M_2^{+{'}},$
where
\begin{align*}
 M_2^{+{'}}&:= M_2^{+} \cap \Set{(n,m_1, 0) | n,m_1\in \Z}\quad\text{ and}\\
 M_2^{+{''}}&:= M_2^{+} \cap \Set{(n,m_1, m_2) | n,m_1, m_2\in \Z \text{ and } |m_2|=1}.
\end{align*}
Since $M_2^{+{''}} \subset M_1^+$, the conditions $b\theta>2$ and 
$\theta( a-b\lambda_2) + \lambda_2 - \lambda_1<0$
ensure also $\norm*{\psi^{(a, b)}}_{\ell^\theta( M_2^{+{''}})} < \infty$. For the set  $M_2^{+{'}}$, we get
\begin{align*}
&\norm*{\psi^{(a, b)}}_{\ell^\theta(M_2^{+{'}})}^\theta = \sum_{(n,m_1,m_2) \in M_2^{+{'}}}\abs*{2^{na}\norm*{A^{\lambda_1, \lambda_2}_{n,m_1,m_2,1}}^{-b}}^\theta
	 \stackrel{\ref{lem:AsymptoticStandard1}}{\asymp}  \sum_{(n,m_1,m_2) \in M_2^{+{'}}} \left(2^{na} 2^{-bn\lambda_2}\right)^\theta\\
    & = \sum_{n\geq 0}\sum_{\substack{m_2 = 0 \\ m_1 = 0}} 2^{\theta n(a-b\lambda_2)}
    = \sum_{n= 0}^\infty 2^{\theta n(a-b\lambda_2)} < \infty
\end{align*}

if and only if $\theta( a-b\lambda_2)<0$, which is weaker condition than $\theta( a-b\lambda_2) + \lambda_2 - \lambda_1<0$ since $\lambda_2 - \lambda_1 \geq 0$.

\item The set $M_3^+$: In this case, we have for $b\theta > 2$ the following additional condition
\begin{align*}
&\norm*{\psi^{(a, b)}}_{\ell^\theta(M^+_3)}^\theta = \sum_{(n,m_1,m_2) \in M^+_3}\abs*{2^{na}\norm*{A^{\lambda_1, \lambda_2}_{n,m_1,m_2,1}}^{-b}}^\theta
	 \asymp  \sum_{(n,m_1,m_2) \in M^+_3} \left(2^{na} 2^{-bn\lambda_1}|m_1|^{-b}\right)^\theta\\
    & = \sum_{n\geq 0}\sum_{\substack{ 2^{n\lambda_2}|m_2| \leq 2^{n\lambda_1}|m_1| \\ 2^{n\lambda_2} \leq 2^{n\lambda_1} |m_1|}} 2^{\theta n(a-b\lambda_1)}|m_1|^{-b\theta}
    \asymp \sum_{n= 0}^\infty\sum_{m_1= \lceil 2^{n(\lambda_2 - \lambda_1)}\rceil}^\infty \sum_{m_2=0}^{\lfloor 2^{n(\lambda_1 - \lambda_2)} m_1 \rfloor} 2^{\theta n(a-b\lambda_1)} m_1^{-b\theta}\\
 	 & = \sum_{n= 0}^\infty\sum_{m_1= \lceil 2^{n(\lambda_2 - \lambda_1)}\rceil}^\infty  \left(\lfloor 2^{n(\lambda_1 - \lambda_2)} m_1 \rfloor + 1 \right)2^{\theta n(a-b\lambda_1)} m_1^{-b\theta}\\
     &\stackrel{\ref{lem:AsymptoticLemma}}{\asymp} \sum_{n= 0}^\infty\sum_{m_1= \lceil 2^{n(\lambda_2 - \lambda_1)}\rceil}^\infty   2^{n(\lambda_1 - \lambda_2)} m_1 2^{\theta n(a-b\lambda_1)} m_1^{-b\theta}\\
     &=\sum_{n= 0}^\infty\sum_{m_1= \lceil 2^{n(\lambda_2 - \lambda_1)}\rceil}^\infty 2^{n(\theta (a-b\lambda_1) +\lambda_1 - \lambda_2) } m_1^{1-b\theta}
     \stackrel{\ref{lem:AsymptoticLemma}}{\asymp} \sum_{n= 0}^\infty 2^{n(\theta (a-b\lambda_1) +\lambda_1 - \lambda_2) }  \lceil 2^{n(\lambda_2 - \lambda_1)}\rceil ^{2-b\theta} \\
      & \stackrel{\ref{lem:AsymptoticLemma}}{\asymp} \sum_{n= 0}^\infty 2^{n(\theta (a-b\lambda_1) +\lambda_1 - \lambda_2) } 2^{n(\lambda_2 - \lambda_1)(2-b\theta)} 
      = \sum_{n= 0}^\infty 2^{n( \theta(a-b\lambda_2)+ \lambda_2 - \lambda_1 )}< \infty
\end{align*}
if and only if $\theta( a-b\lambda_2) + \lambda_2 - \lambda_1<0$.

\item The set $M_4^+$: In this case, we have
\begin{align*}
&\norm*{\psi^{(a, b)}}_{\ell^\theta(M^+_4)}^\theta = \sum_{(n,m_1,m_2) \in M^+_4}\abs*{2^{na}\norm*{A^{\lambda_1, \lambda_2}_{n,m_1,m_2,1}}^{-b}}^\theta
	\asymp  \sum_{(n,m_1,m_2) \in M^+_4} \left(2^{na} 2^{-bn\lambda_2}\right)^\theta\\
    & = \sum_{n\geq 0}\sum_{\substack{ 2^{n\lambda_1} |m_1| \leq 2^{n\lambda_2} \\ 2^{n\lambda_2} |m_2| \leq 2^{n\lambda_1} |m_1|}} 2^{\theta n(a-b\lambda_2)}
    \asymp \sum_{n= 0}^\infty\sum_{m_1=0}^{\lfloor 2^{n(\lambda_2 - \lambda_1)}\rfloor} \sum_{m_2=0}^{\lfloor 2^{n(\lambda_1 - \lambda_2)} m_1 \rfloor} 2^{\theta n(a-b\lambda_2)}\\
    & \stackrel{(\ast)}{\asymp}\sum_{n= 0}^\infty\sum_{m_1=0}^{\lfloor 2^{n(\lambda_2 - \lambda_1)}\rfloor} 2^{\theta n(a-b\lambda_2)}
    =\sum_{n = 0}^\infty\left(\lfloor 2^{n(\lambda_2 - \lambda_1)}\rfloor + 1\right) 2^{\theta n(a-b\lambda_2)}\\
    & \stackrel{\ref{lem:AsymptoticLemma}}{\asymp} \sum_{n = 0}^\infty  2^{n(\lambda_2 - \lambda_1)}2^{\theta n(a-b\lambda_2)}
    =\sum_{n = 0}^\infty  2^{n(\theta(a-b\lambda_2) +\lambda_2 - \lambda_1)} < \infty
    \end{align*}
if and only if $\theta( a-b\lambda_2) + \lambda_2 - \lambda_1<0$. In $(\ast)$ we used that for $0\leq m_1 \leq \lfloor 2^{n(\lambda_2 - \lambda_1)} \rfloor$ the inequality 
$0\leq m_2 \leq \lfloor 2^{n(\lambda_1 - \lambda_2)} m_1 \rfloor$
implies $m_2\in \{0, 1\}$, which means that the value of the sum we omitted in this step is in $\{1,2\}$.

\item The set $M^+$: The previous four cases and Lemma \ref{lem:AsymptoticLemma2} combined with Lemma \ref{lem:AsymptoticStandard1} lead to
\begin{align*}
\norm*{\psi^{(a, b)}}_{\ell^\theta(M^+)}^\theta
\asymp \sum_{(n,m_1,m_2) \in M^+}\abs*{2^{na} \left(2^{n\lambda_2} + 2^{n\lambda_1}|m_1| + 2^{n\lambda_2}|m_2|\right)^{-b}}^\theta < \infty
\end{align*}
if and only if $\theta( a-b\lambda_2) + \lambda_2 - \lambda_1<0$ and $b\theta> 2$ for the set $M^+:=\cup_{i=1}^4M_i^+$.

\item The set $M_1^-$: In this case, we have under the already established condition $b\theta > 2$
\begin{align*}
&\norm*{\psi^{(a, b)}}_{\ell^\theta(M^-_1)}^\theta = \sum_{(n,m_1,m_2) \in M^-_1}\abs*{2^{na}\norm*{A^{\lambda_1, \lambda_2}_{n,m_1,m_2,1}}^{-b}}^\theta
	 \asymp  \sum_{(n,m_1,m_2) \in M^-_1} \left(2^{na} 2^{-bn\lambda_2}|m_2|^{-b}\right)^\theta\\
    & = \sum_{n < 0}\sum_{\substack{2^n \leq  2^{n\lambda_2} |m_2| \\ 2^{n\lambda_1} |m_1| \leq 2^{n\lambda_2} |m_2|}} 2^{\theta n(a-b\lambda_2)}|m_2|^{-b\theta}
     \asymp \sum_{n< 0}^\infty\sum_{m_2= \lceil 2^{n(1-\lambda_2)} \rceil}^\infty \sum_{m_1=0}^{\lfloor 2^{n(\lambda_2 - \lambda_1)} m_2 \rfloor} 2^{\theta n(a-b\lambda_2)} m_2^{-b\theta}\\
    &= \sum_{n< 0}^\infty\sum_{m_2= \lceil 2^{n(1-\lambda_2)} \rceil}^\infty  \left( \lfloor 2^{n(\lambda_2 - \lambda_1)} m_2 \rfloor + 1\right) 2^{\theta n(a-b\lambda_2)} m_2^{-b\theta}\\
     &\stackrel{\ref{lem:AsymptoticLemma}}{\asymp}\sum_{n< 0}^\infty\sum_{m_2= \lceil 2^{n(1-\lambda_2)} \rceil}^\infty 2^{n(\lambda_2 - \lambda_1)} m_2 2^{\theta n(a-b\lambda_2)} m_2^{-b\theta}\\
     & =\sum_{n< 0}^\infty\sum_{m_2= \lceil 2^{n(1-\lambda_2)} \rceil}^\infty 2^{ n(\theta(a-b\lambda_2)+\lambda_2 - \lambda_1)} m_2^{1-b\theta}
      \stackrel{\ref{lem:AsymptoticLemma}}{\asymp}\sum_{n< 0}^\infty2^{ n(\theta(a-b\lambda_2)+\lambda_2 - \lambda_1)}  \lceil 2^{n(1-\lambda_2)} \rceil^{2-b\theta} \\
     & \stackrel{\ref{lem:AsymptoticLemma}}{\asymp}\sum_{n< 0}2^{ n(\theta(a-b\lambda_2)+\lambda_2 - \lambda_1)}2^{n(1-\lambda_2)(2-b\theta)}
      = \sum_{n< 0}2^{ n(\theta(a-b)+ 2 -\lambda_2 - \lambda_1)} <\infty
\end{align*}
if and only if  and $\theta( a-b) + 2 - \lambda_2 - \lambda_1>0$.

\item The set $M_2^-$: In this case, we have
\begin{align*}
&\norm*{\psi^{(a, b)}}_{\ell^\theta(M^-_2)}^\theta = \sum_{(n,m_1,m_2) \in M^-_2}\abs*{2^{na}\norm*{A^{\lambda_1, \lambda_2}_{n,m_1,m_2,1}}^{-b}}^\theta
	 \asymp  \sum_{(n,m_1,m_2) \in M^-_2} \left(2^{na} 2^{-bn}\right)^\theta\\
    & = \sum_{n < 0}\sum_{\substack{2^{n\lambda_1}|m_1| \leq  2^{n\lambda_2} |m_2| \\ 2^{n\lambda_2} |m_2| \leq 2^{n}}} 2^{\theta n(a-b)}
    \asymp \sum_{n < 0}\sum_{m_2=0}^{\lfloor 2^{n(1-\lambda_2)}\rfloor}\sum_{m_1=0}^{\lfloor 2^{n(\lambda_2-\lambda_1)}m_2\rfloor}2^{\theta n(a-b)}\\
    &=\sum_{n < 0}\sum_{m_2=0}^{\lfloor 2^{n(1-\lambda_2)}\rfloor} \left( \lfloor 2^{n(\lambda_2-\lambda_1)}m_2\rfloor + 1\right)2^{\theta n(a-b)}
   \stackrel{\ref{lem:AsymptoticLemma}}{\asymp} \sum_{n < 0}\sum_{m_2=0}^{\lfloor 2^{n(1-\lambda_2)}\rfloor}  \left(2^{n(\lambda_2-\lambda_1)}m_2+1\right) 2^{\theta n(a-b)}.
\end{align*}

We expand the sum and consider the two following series separately, first

\begin{align*}
&\sum_{n < 0}\sum_{m_2=0}^{\lfloor 2^{n(1-\lambda_2)}\rfloor}  2^{n(\theta(a-b) + \lambda_2-\lambda_1)}m_2 
\stackrel{(\ast)}{\asymp} \sum_{n < 0}2^{n(\theta(a-b) + \lambda_2-\lambda_1)}\lfloor 2^{n(1-\lambda_2)}\rfloor\left(\lfloor 2^{n(1-\lambda_2)}\rfloor + 1\right)\\
& =\sum_{n < 0}2^{n(\theta(a-b) + \lambda_2-\lambda_1)}\lfloor 2^{n(1-\lambda_2)}\rfloor^2  + \sum_{n < 0}2^{n(\theta(a-b) + \lambda_2-\lambda_1)}\lfloor 2^{n(1-\lambda_2)}\rfloor \\
& \stackrel{(\ref{lem:AsymptoticLemma})}{\asymp} \sum_{n < 0}2^{n(\theta(a-b) + \lambda_2-\lambda_1)}2^{2n(1-\lambda_2)}+ \sum_{n < 0}2^{n(\theta(a-b) + \lambda_2-\lambda_1)} 2^{n(1-\lambda_2)}\\
& = \sum_{n < 0} 2^{n(\theta(a-b)+ 2 - \lambda_1 - \lambda_2)} + \sum_{n < 0} 2^{n(\theta(a-b)+1 - \lambda_1 )} < \infty
\end{align*}
if and only if $\theta( a-b) + 2 - \lambda_2 - \lambda_1>0$ and 
$\theta(a-b)+1 - \lambda_1 > 0,$
where the first inequality implies the second inequality because $\lambda_2 - 1 \geq 0$ and in $(\ast)$ we used $\sum_{m=1}^n m = \frac{1}{2}n(n+1).$

Now, we consider the remaining series
\begin{align*}
\sum_{n < 0}\sum_{m_2=0}^{\lfloor 2^{n(1-\lambda_2)}\rfloor} 2^{n\theta(a-b)}= \sum_{n < 0}2^{n\theta(a-b)} \left(\lfloor 2^{n(1-\lambda_2)}\rfloor + 1 \right)
&\stackrel{\ref{lem:AsymptoticLemma}}{\asymp} \sum_{n < 0}2^{n\theta(a-b)} 2^{n(1-\lambda_2)}\\
 & = \sum_{n < 0}2^{n(\theta(a-b) + 1 - \lambda_2 )}< \infty 
\end{align*}
if and only if $\theta(a-b)+1 - \lambda_2 > 0$, which is implied by the already established inequality $\theta( a-b) + 2 - \lambda_2 - \lambda_1>0$ since $\lambda_1 - 1 \geq 0$.

In summary, we have $\norm*{\psi^{(a, b)}}_{\ell^\theta(M^-_2)}^\theta < \infty \Longleftrightarrow \theta( a-b) + 2 - \lambda_2 - \lambda_1>0.$

\item The set $M_3^-$: In this case, we have under the already established condition $b\theta > 2$
\begin{align*}
&\norm*{\psi^{(a, b)}}_{\ell^\theta(M^-_3)}^\theta = \sum_{(n,m_1,m_2) \in M^-_3}\abs*{2^{na}\norm*{A^{\lambda_1, \lambda_2}_{n,m_1,m_2,1}}^{-b}}^\theta
	\asymp  \sum_{(n,m_1,m_2) \in M^-_3} \left(2^{na} 2^{-bn\lambda_1}|m_1|^{-b}\right)^\theta\\
    & = \sum_{n < 0}\sum_{\substack{2^{n\lambda_2} |m_2| \leq  2^{n\lambda_1} |m_1| \\ 2^{n} \leq 2^{n\lambda_1} |m_1|}} 2^{\theta n(a-b\lambda_1)}|m_1|^{-b\theta}
	 \asymp \sum_{n < 0}\sum_{m_1=\lceil 2^{n(1-\lambda_1)} \rceil}^\infty \sum_{m_2=0}^{\lfloor 2^{n(\lambda_1 - \lambda_2)} m_1\rfloor} 2^{\theta n(a-b\lambda_1)}m_1^{-b\theta}\\
    & = \sum_{n < 0}\sum_{m_1=\lceil 2^{n(1-\lambda_1)} \rceil}^\infty \left(\lfloor 2^{n(\lambda_1 - \lambda_2)} m_1\rfloor + 1\right)2^{\theta n(a-b\lambda_1)}m_1^{-b\theta}\\
    &\stackrel{\ref{lem:AsymptoticLemma}}{\asymp} \sum_{n < 0}\sum_{m_1=\lceil 2^{n(1-\lambda_1)} \rceil}^\infty 2^{n(\lambda_1 - \lambda_2)} m_1 2^{\theta n(a-b\lambda_1)}m_1^{-b\theta}\\
    &=\sum_{n < 0}\sum_{m_1=\lceil 2^{n(1-\lambda_1)} \rceil}^\infty 2^{n(\theta(a-b\lambda_1)+\lambda_1 - \lambda_2)}m_1^{1-b\theta}
    \stackrel{\ref{lem:AsymptoticLemma}}{\asymp} \sum_{n < 0}2^{n(\theta(a-b\lambda_1)+\lambda_1 - \lambda_2)}\lceil 2^{n(1-\lambda_1)} \rceil^{2-b\theta}\\
    &\stackrel{\ref{lem:AsymptoticLemma}}{\asymp} \sum_{n < 0}2^{n(\theta(a-b\lambda_1)+\lambda_1 - \lambda_2)} 2^{n(1-\lambda_1)(2-b\theta)}
    = \sum_{n < 0}2^{n(\theta(a-b)+ 2 -\lambda_1 - \lambda_2)}< \infty
\end{align*}
if and only if $\theta( a-b) + 2 - \lambda_2 - \lambda_1>0$.

\item The set $M_4^-$: In this case, we have 
\begin{align*}
&\norm*{\psi^{(a, b)}}_{\ell^\theta(M^-_4)}^\theta = \sum_{(n,m_1,m_2) \in M^-_4}\abs*{2^{na}\norm*{A^{\lambda_1, \lambda_2}_{n,m_1,m_2,1}}^{-b}}^\theta
	 \asymp  \sum_{(n,m_1,m_2) \in M^-_4} \left(2^{na} 2^{-bn}\right)^\theta\\
    & = \sum_{n < 0}\sum_{\substack{2^{n\lambda_2}|m_2| \leq  2^{n\lambda_1} |m_1| \\ 2^{n\lambda_1} |m_1| \leq 2^{n}}} 2^{\theta n(a-b)} < \infty
     \asymp  \sum_{n < 0}\sum_{m_1=0}^{\lfloor 2^{n(1-\lambda_1)}\rfloor}\sum_{m_2=0}^{\lfloor 2^{n(\lambda_1-\lambda_2)}m_1\rfloor}2^{\theta n(a-b)}\\
    & = \sum_{n < 0}\sum_{m_1=0}^{\lfloor 2^{n(1-\lambda_1)}\rfloor} \left( \lfloor 2^{n(\lambda_1-\lambda_2)}m_1\rfloor + 1\right) 2^{\theta n(a-b)}
    \stackrel{\ref{lem:AsymptoticLemma}}{\asymp}\sum_{n < 0}\sum_{m_1=0}^{\lfloor 2^{n(1-\lambda_1)}\rfloor} \left(2^{n(\lambda_1-\lambda_2)}m_1 + 1\right) 2^{\theta n(a-b)}
\end{align*}
and this sum is finite if and only if the following two sums are finite. The first sum is
\begin{align*}
 &\sum_{n < 0}\sum_{m_1=0}^{\lfloor 2^{n(1-\lambda_1)}\rfloor} 2^{n(\theta(a-b)+\lambda_1-\lambda_2)}m_1
 \stackrel{(\ast)}{\asymp}\sum_{n < 0} 2^{n(\theta(a-b)+\lambda_1-\lambda_2)}\lfloor 2^{n(1-\lambda_1)}\rfloor\left(\lfloor 2^{n(1-\lambda_1)}\rfloor + 1\right)\\
& \stackrel{\ref{lem:AsymptoticLemma}}{\asymp} \sum_{n < 0} 2^{n(\theta(a-b)+\lambda_1-\lambda_2)} 2^{2n(1-\lambda_1)}
 = \sum_{n < 0} 2^{n(\theta(a-b)+2-\lambda_1-\lambda_2)} < \infty
\end{align*}
if and only if  $\theta( a-b) + 2 - \lambda_2 - \lambda_1>0$, where we used in $(\ast)$ again 
$\sum_{m=1}^n m = \frac{1}{2}n(n+1)$. The second sum is
\begin{align*}
 \sum_{n < 0}\sum_{m_1=0}^{\lfloor 2^{n(1-\lambda_1)}\rfloor} 2^{n\theta(a-b)}
 = \sum_{n < 0}2^{n\theta(a-b)}\left(\lfloor 2^{n(1-\lambda_1)}\rfloor + 1 \right)
& \stackrel{\ref{lem:AsymptoticLemma}}{\asymp} \sum_{n < 0}2^{n\theta(a-b)} 2^{n(1-\lambda_1)}\\
&=  \sum_{n < 0}2^{n(\theta(a-b)+1-\lambda_1)} < \infty 
\end{align*}
if and only if $\theta(a-b)+1-\lambda_1>0$, which is a weaker condition than for the first sum because $\lambda_2 \geq 1$.

\item The set $M^-$: The previous four cases and Lemma \ref{lem:AsymptoticLemma2} combined with Lemma \ref{lem:AsymptoticStandard1} lead to
\begin{align*}
&\norm*{\psi^{(a, b)}}_{\ell^\theta(M^-)}^\theta
\asymp \sum_{(n,m_1,m_2) \in M^-}\abs*{2^{na} \left(2^{n} + 2^{n\lambda_1}|m_1| + 2^{n\lambda_2}|m_2|\right)^{-b}}^\theta < \infty
\end{align*}
if and only if $\theta( a-b) + 2 - \lambda_2 - \lambda_1>0$ and $b\theta> 2$ for the set $M^-:=\cup_{i=1}^4M_i^-$.

\item The set $\Z^3$: To summarize our results, we have 
$\psi^{(a, b)}\in \ell^\theta(M^- \cup M^+) = \ell^\theta(\Z^3)  < \infty$
if and only if $b\theta> 2$ and
\begin{align*}
\theta( a-b\lambda_2) + \lambda_2 - \lambda_1&	<\quad 0			<\theta( a-b) + 2 - \lambda_2 - \lambda_1\\
\Longleftrightarrow \frac{\lambda_1 + \lambda_2 -2}{\theta} + b &	<\quad a 		< \frac{\lambda_1 - \lambda_2}{\theta} + b\lambda_2.
\end{align*}
\end{enumerate}
\end{proof}

If we put the last results together, we get a complete characterization for these $1 \leq \lambda_1 \leq  \lambda_2$.

\begin{corollary}\label{cor:ResultatStandard1}
The following are equivalent to  $\psi^{(a, b)}\in \ell^\theta(\Z^3)$  for $1\leq \lambda_1 \leq \lambda_2$, $a\in \R$ and $b\geq 0$: 
\begin{align*}
\begin{cases}
b\theta >2,\ \frac{\lambda_1 + \lambda_2 -2}{\theta} + b < a 		< \frac{\lambda_1 - \lambda_2}{\theta} + b\lambda_2, & \text{if } \theta \in (0,\infty) \\
b\leq a \leq b\lambda_2, & \text{if } \theta = \infty.
\end{cases}
\end{align*}
\end{corollary}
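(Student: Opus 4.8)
The plan is straightforward: this corollary is nothing more than a bookkeeping assembly of the two immediately preceding lemmas, which have already separated the analysis according to whether the summability index $\theta$ is finite or infinite. The two lemmas together exhaust all values $\theta \in (0,\infty]$, so combining them into a single case distinction yields the claim with no additional computation.

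Concretely, I would argue as follows. For $\theta = \infty$, the asserted equivalence
\[
\psi^{(a,b)} \in \ell^\infty(\Z^3) \iff b \leq a \leq b\lambda_2
\]
is precisely the content of Lemma \ref{lem:ThetaInfty1Lambda1Lambda2}, valid under the standing hypotheses $1 \leq \lambda_1 \leq \lambda_2$, $a \in \R$ and $b \geq 0$. For $\theta \in (0,\infty)$, the asserted equivalence
\[
\psi^{(a,b)} \in \ell^\theta(\Z^3) \iff b\theta > 2 \ \text{ and }\ \frac{\lambda_1 + \lambda_2 - 2}{\theta} + b < a < \frac{\lambda_1 - \lambda_2}{\theta} + b\lambda_2
\]
is exactly Lemma \ref{lem:ThetaNotInfty1Lambda1Lambda2}, under the same standing hypotheses. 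Since these two regimes partition the full parameter range $\theta \in (0,\infty]$, writing the two characterizations as the two branches of the displayed case distinction completes the proof.

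There is essentially no obstacle here: all the substantive work — the asymptotic norm estimates of Lemma \ref{lem:AsymptoticStandard1}, the patch-by-patch summation over the octants $M_i^\dagger$, and the reduction via Lemmas \ref{lem:AsymptoticLemma} and \ref{lem:AsymptoticLemma2} — has already been carried out inside the two lemmas. The only point worth a brief remark is that the hypotheses on $a$, $b$, $\lambda_1$, $\lambda_2$ are identical across both lemmas and match those of the corollary, so the merger is clean and requires no extra restrictions. Accordingly, I expect the written proof to consist of a single sentence invoking the two lemmas.

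\begin{proof}
This is an immediate consequence of Lemma \ref{lem:ThetaInfty1Lambda1Lambda2} for the case $\theta = \infty$ and of Lemma \ref{lem:ThetaNotInfty1Lambda1Lambda2} for the case $\theta \in (0,\infty)$, which together cover all $\theta \in (0,\infty]$.
\end{proof}
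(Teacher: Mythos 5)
Your proposal is correct and matches the paper exactly: the paper introduces this corollary with the phrase ``If we put the last results together,'' i.e.\ it is obtained by combining Lemma \ref{lem:ThetaInfty1Lambda1Lambda2} (the case $\theta=\infty$) and Lemma \ref{lem:ThetaNotInfty1Lambda1Lambda2} (the case $\theta\in(0,\infty)$), with no further argument. Your one-sentence proof is precisely the intended justification.
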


\subsection{Standard Shearlet Group: remaining cases}

By proceeding in a completely analogous manner in the cases $\lambda_1 \leq \lambda_2 \leq 1$ and $\lambda_1 \leq 1 \leq \lambda_2 $, we achieve for these remaining cases similar results, which we summarize in the next corollary.

\begin{corollary}\label{cor:StanardEmbeddingPrep}
The following are equivalent to  $\psi^{(a, b)}\in \ell^\theta(\Z^3)$ for $a\in \R$ and $b\geq 0$:
\begin{enumerate}[i)]

\item If $1\leq \lambda_1 \leq \lambda_2$:
\begin{align*}
\begin{cases}
b\theta >2,\ \frac{\lambda_1 + \lambda_2 -2}{\theta} + b < a 		< \frac{\lambda_1 - \lambda_2}{\theta} + b\lambda_2, & \text{if } \theta \in (0,\infty) \\
b\leq a \leq b\lambda_2, & \text{if } \theta = \infty.
\end{cases}
\end{align*}

\item If $\lambda_1 \leq \lambda_2 \leq 1$:
\begin{align*}
\begin{cases}
b\theta >2,\ \frac{\lambda_2 - \lambda_1}{\theta} + b\lambda_1 	< a < \frac{\lambda_1 + \lambda_2 - 2}{\theta} + b, & \text{if } \theta \in (0,\infty) \\
b\lambda_1\leq a \leq b, & \text{if } \theta = \infty.
\end{cases}
\end{align*}

\item If $\lambda_1 \leq 1 \leq \lambda_2 $:
\begin{align*}
\begin{cases}
b\theta >2,\ \frac{\lambda_2 - \lambda_1}{\theta} + b\lambda_1 	< a < \frac{\lambda_1 - \lambda_2}{\theta} + b\lambda_2, & \text{if } \theta \in (0,\infty) \\
b\lambda_1\leq a \leq b\lambda_2, & \text{if } \theta = \infty.
\end{cases}
\end{align*}
And the case for $\lambda_2 \leq \lambda_1$ corresponds to the respective case for $\lambda_1 \leq \lambda_2$ by interchanging $\lambda_1$ and $\lambda_2$.
\end{enumerate}
\end{corollary}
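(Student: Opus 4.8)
The plan is to repeat, essentially verbatim, the programme of Subsection~\ref{subsec:StandardShearletCase}: part~(i) is exactly Corollary~\ref{cor:ResultatStandard1}, so only parts~(ii) and~(iii), together with the reduction of $\lambda_2 \le \lambda_1$ to $\lambda_1 \le \lambda_2$, remain. For each new ordering of the exponents I would split $\Z^3$ into the two discrete half-spaces $\{n \ge 0\}$ and $\{n<0\}$, subdivide each into four octants on which exactly one of the five summands of
\[
\norm*{A^{\lambda_1,\lambda_2}_{n,m_1,m_2,1}} = 2^n + 2^{n\lambda_1} + 2^{n\lambda_2} + 2^{n\lambda_1}|m_1| + 2^{n\lambda_2}|m_2|
\]
dominates, prove the analogue of Lemma~\ref{lem:AsymptoticStandard1}, and then evaluate $\norm{\psi^{(a,b)}}_{\ell^\theta}$ over each octant using Lemmata~\ref{lem:AsymptoticLemma} and~\ref{lem:AsymptoticLemma2}, exactly as in Lemmata~\ref{lem:ThetaInfty1Lambda1Lambda2} and~\ref{lem:ThetaNotInfty1Lambda1Lambda2}. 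The only feature that changes from case to case is \emph{which} of the three scaling terms $2^n,2^{n\lambda_1},2^{n\lambda_2}$ is dominant: for $n \ge 0$ it is the one with exponent $\max\{1,\lambda_1,\lambda_2\}$, and for $n<0$ the one with exponent $\min\{1,\lambda_1,\lambda_2\}$.

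The key structural observation, which keeps the bookkeeping under control, is that the convergence condition extracted from a given half-space depends only on which exponent is extremal there, together with the standing assumption $\lambda_1 \le \lambda_2$. Concretely, the positive half-space with $\lambda_2$ maximal always yields the upper bound $a < b\lambda_2 + (\lambda_1-\lambda_2)/\theta$ for $\theta<\infty$ and $a \le b\lambda_2$ for $\theta=\infty$, while the negative half-space with $\lambda_1$ minimal always yields the lower bound $a > b\lambda_1 + (\lambda_2-\lambda_1)/\theta$, respectively $a \ge b\lambda_1$; in each case the genuinely two-dimensional shear sums additionally force $b\theta>2$ when $\theta<\infty$. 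The first of these is precisely the $M^+$-computation already carried out in case~(i) (which nowhere uses $\lambda_1\ge 1$, only $\lambda_2\ge\lambda_1$ and the extremality of $\lambda_2$), and once the second is established it is reusable as well.

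Consequently the genuinely new work is confined to case~(ii): one must evaluate the positive half-space with exponent $1$ maximal, producing the upper bound $a < b + (\lambda_1+\lambda_2-2)/\theta$ (or $a \le b$), and the negative half-space with $\lambda_1$ minimal, giving the lower bound quoted above. Case~(iii) then requires \emph{no} further octant estimates: its positive half-space ($\lambda_2$ maximal) reuses the bound from case~(i), its negative half-space ($\lambda_1$ minimal) reuses the bound just obtained in case~(ii), and intersecting the two conditions (with $b\theta>2$) gives exactly the stated range. Finally, the case $\lambda_2 \le \lambda_1$ reduces to $\lambda_1\le\lambda_2$ under the simultaneous substitution $(\lambda_1,m_1)\leftrightarrow(\lambda_2,m_2)$, which leaves the norm above, and hence $\psi^{(a,b)}$ as an element of $\ell^\theta(\Z^3)$, invariant.

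The main obstacle is purely computational: correctly determining the summation ranges of the nested sums over $(m_1,m_2)$ in the two new octant decompositions of case~(ii) — in particular the lower summation index $\lceil 2^{n(1-\lambda_i)}\rceil$ forced by the dominance of the $2^n$, respectively $2^{n\lambda_1}$, scaling term — and then applying the asymptotic identities of Lemma~\ref{lem:AsymptoticLemma} with the correct signs, since the inequalities $\lambda_j - 1 \ge 0$ that were available in case~(i) are reversed to $1-\lambda_j \ge 0$ here. None of this is conceptually new; it is the same machinery as in Lemma~\ref{lem:ThetaNotInfty1Lambda1Lambda2}, with the roles of the scaling exponents permuted.
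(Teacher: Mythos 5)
Your proposal is correct and follows essentially the same route as the paper, which itself disposes of cases~(ii) and~(iii) by declaring them ``completely analogous'' to the octant-by-octant analysis of Lemmata~\ref{lem:ThetaInfty1Lambda1Lambda2} and~\ref{lem:ThetaNotInfty1Lambda1Lambda2}; your verification that each half-space contributes the bound $a \lessgtr b\mu + (\lambda_1+\lambda_2-2\mu)/\theta$ with $\mu$ the extremal element of $\{1,\lambda_1,\lambda_2\}$ there matches the stated conditions in all three cases. Your additional observation that the $M^+$-computation of case~(i) and the negative half-space computation of case~(ii) can be recycled verbatim (so that case~(iii) needs no new estimates) is a sound and slightly more economical organization of the same machinery.
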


Recall that we were actually interested in conditions that characterize the finiteness of the norm of $\zeta_{n,m_1,m_2,1}^{\lambda_1, \lambda_2} = \psi_{n,m_1,m_2}^{(a, \beta)} + \psi_{n,m_1,m_2}^{(a, \beta-k)}$ in order to apply Theorem \ref{thm:SobolevEmbedding} to decide and characterize the existence of an embedding $\Co(L^{p,r}_m(\R^d\rtimes H^{\lambda_1, \lambda_2}))\hookrightarrow W^{k,q}(\R^d)$ in the sense of Definition \ref{def:EmbeddingSobolev}.

\begin{theorem}\label{Cor:EmbeddingStandardLong}
Let $p, q, r\in (0,\infty]$, $k\in \N_0$ and set 
$\gamma:=\left(\frac{1}{2}-\frac{1}{r}+\frac{1}{p}-\frac{1}{q}\right).$
The embedding $\Co(\mathrm{L}^{p,r}_{v^{(\alpha, \beta)}}(\R^3\rtimes H^{\lambda_1, \lambda_2}))\hookrightarrow W^{k,q}(\R^3)$ holds for 
$q \in (0,2]\cup \{\infty\},$
if and only if $p\leq q$ and
\begin{enumerate}[i)]
\item for $r\leq q^\triangledown$
	\begin{enumerate}
	\item the inequalities
    \begin{align*}
    \beta \leq \alpha + (1+\lambda_1 + \lambda_2)\gamma \leq (\beta-k)\lambda_2
    \end{align*}
    hold if $1\leq \lambda_1 \leq \lambda_2$,
    
	\item the inequalities
    \begin{align*}
    \max\{\beta \lambda_1, (\beta-k) \lambda_1\} \leq \alpha + (1+\lambda_1 + \lambda_2)\gamma \leq \beta-k
    \end{align*}
    hold if $\lambda_1 \leq \lambda_2 \leq 1$,
    
    \item   the inequalities
    \begin{align*}
    \max\{\beta \lambda_1, (\beta-k) \lambda_1\} \leq \alpha + (1+\lambda_1 + \lambda_2)\gamma \leq (\beta-k)\lambda_2
    \end{align*}
    hold if $\lambda_1 \leq 1 \leq \lambda_2$.
	\end{enumerate}
    
\item for $r> q^\triangledown$ the inequality $\beta > k +2\left(\frac{1}{q^\triangledown} - \frac{1}{r}\right)$ and
	\begin{enumerate}
	\item the inequalities
    \begin{align*}
    (\lambda_1 + \lambda_2 - 2)\left(\frac{1}{q^\triangledown} - \frac{1}{r}\right) + \beta 
    <  \alpha + (1+\lambda_1 + \lambda_2)\gamma
    < (\lambda_1-\lambda_2)\left(\frac{1}{q^\triangledown} - \frac{1}{r}\right) + (\beta-k)\lambda_2
    \end{align*}
    hold if $1\leq \lambda_1 \leq \lambda_2$,
    
	\item the inequalities
    \begin{align*}
    (\lambda_2 - \lambda_1)\left(\frac{1}{q^\triangledown} - \frac{1}{r}\right) + \max\{\beta \lambda_1, (\beta-k) \lambda_1\}
    &<  \alpha + (1+\lambda_1 + \lambda_2)\gamma \\
    &<(\lambda_1 + \lambda_2 -2)\left(\frac{1}{q^\triangledown} - \frac{1}{r}\right) + (\beta-k)
    \end{align*}
    hold if $\lambda_1 \leq \lambda_2\leq 1$,
    
	\item the inequalities
    \begin{align*}
            (\lambda_2 - \lambda_1)\left(\frac{1}{q^\triangledown} - \frac{1}{r}\right) + \max\{\beta \lambda_1, (\beta-k) \lambda_1\}
    &<  \alpha + (1+\lambda_1 + \lambda_2)\gamma \\
    &< (\lambda_1-\lambda_2)\left(\frac{1}{q^\triangledown} - \frac{1}{r}\right) + (\beta-k)\lambda_2
    \end{align*}
    hold if $\lambda_1 \leq 1 \leq \lambda_2$.
	\end{enumerate}
\end{enumerate}    
For the remaining case $q\in (2,\infty)$, the given conditions are sufficient for the embedding $\Co(\mathrm{L}^{p,r}_{v^{(\alpha, \beta)}}(\R^3\rtimes H^{\lambda_1, \lambda_2})\hookrightarrow W^{k,q}(\R^3)$, and necessary for this embedding if one replaces $q^\triangledown$ with $q$ in the inequalities.
The case for $\lambda_2 \leq \lambda_1$ corresponds to the respective case for $\lambda_1 \leq \lambda_2$ by interchanging $\lambda_1$ and $\lambda_2$.

\end{theorem}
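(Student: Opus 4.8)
The plan is to assemble the statement from the two tools already in place: the Fourier isomorphism of Theorem~\ref{thm: FourierIsoCoorbitDecSpaces}, which turns the coorbit embedding into a decomposition-space embedding, and Voigtlaender's criterion, Theorem~\ref{thm:SobolevEmbedding}, which reduces the latter to a summability condition on $\zeta^{\lambda_1,\lambda_2}=w^{\{q\}}/u^{(\alpha,\beta)}$. First I would record that, by Theorem~\ref{thm: FourierIsoCoorbitDecSpaces} and Lemma~\ref{lem:StandardWellSpread}, the space $\Co(\mathrm{L}^{p,r}_{v^{(\alpha,\beta)}}(\R^3\rtimes H^{\lambda_1,\lambda_2}))$ is isomorphic to $\mathcal{D}(\mathcal{C}^{\lambda_1,\lambda_2},\mathrm{L}^p,\ell^r_{u^{(\alpha,\beta)}})$, so by Definition~\ref{def:EmbeddingSobolev}~iii) the asserted Sobolev embedding \emph{is}, by definition, the corresponding decomposition-space embedding. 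Applying Theorem~\ref{thm:SobolevEmbedding} with $\mathcal{Q}=\mathcal{C}^{\lambda_1,\lambda_2}$ and $u=u^{(\alpha,\beta)}$, I would observe that for $q\in(0,2]\cup\{\infty\}$ one has $q^\triangledown=q$ when $q\leq 2$ and $q^\triangledown=1$ when $q=\infty$, and in each case the summability exponents in parts i) and ii) agree. Hence in this range the embedding holds if and only if $p\leq q$ and $\zeta^{\lambda_1,\lambda_2}\in\ell^\theta(I)$ with $\theta:=q^\triangledown\cdot(r/q^\triangledown)'$.

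The second step is to evaluate $\theta$. If $r\leq q^\triangledown$, then $r/q^\triangledown\leq 1$, so $(r/q^\triangledown)'=\infty$ and therefore $\theta=\infty$; this is regime i) of the theorem. If $r>q^\triangledown$, a direct computation gives $(r/q^\triangledown)'=r/(r-q^\triangledown)$, hence $\theta=q^\triangledown r/(r-q^\triangledown)$ and, crucially, $\tfrac{1}{\theta}=\tfrac{1}{q^\triangledown}-\tfrac{1}{r}$. This single identity is what produces every occurrence of the factor $\big(\tfrac{1}{q^\triangledown}-\tfrac{1}{r}\big)$ in regime ii).

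The third step converts $\zeta^{\lambda_1,\lambda_2}\in\ell^\theta(I)$ into the stated inequalities. Using that $\zeta^{\lambda_1,\lambda_2}_{\,\cdot,1}=\psi^{(a,\beta)}+\psi^{(a,\beta-k)}$ with $a=\alpha+(1+\lambda_1+\lambda_2)\gamma$ and both summands nonnegative, the equivalence \eqref{eq:sequenceStandardShearlet} shows $\zeta^{\lambda_1,\lambda_2}\in\ell^\theta(I)$ holds exactly when \emph{both} $\psi^{(a,\beta)}\in\ell^\theta(\Z^3)$ and $\psi^{(a,\beta-k)}\in\ell^\theta(\Z^3)$. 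I would then apply Corollary~\ref{cor:StanardEmbeddingPrep} twice, with $b=\beta$ and $b=\beta-k$, and intersect the two systems. Because $\beta-k\leq\beta$ and $k\geq 0$, the lower bounds of the form $\tfrac{(\cdots)}{\theta}+b$ are governed by $b=\beta$; the upper bounds of the form $\tfrac{(\cdots)}{\theta}+b$ and $\tfrac{(\cdots)}{\theta}+b\lambda_2$ are governed by $b=\beta-k$, using $\lambda_2>0$ so that $\min\{\beta\lambda_2,(\beta-k)\lambda_2\}=(\beta-k)\lambda_2$; whereas the lower bounds $\tfrac{(\cdots)}{\theta}+b\lambda_1$ must be kept as $\max\{\beta\lambda_1,(\beta-k)\lambda_1\}$, since the sign of $\lambda_1$ is unrestricted. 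Substituting $\tfrac{1}{\theta}=\tfrac{1}{q^\triangledown}-\tfrac{1}{r}$ then reproduces cases (a)--(c) verbatim, while the two constraints $b\theta>2$ collapse to the single binding one $(\beta-k)\theta>2$, i.e.\ $\beta>k+2\big(\tfrac{1}{q^\triangledown}-\tfrac{1}{r}\big)$; this in turn forces $\beta-k>0$, so the degenerate case $\beta-k<0$ (in which $\psi^{(a,\beta-k)}$ can never lie in $\ell^\theta$) is automatically ruled out by emptiness of the resulting intervals. For the leftover range $q\in(2,\infty)$ one has $q^\triangledown=q'\neq q$, and parts i) and ii) of Theorem~\ref{thm:SobolevEmbedding} no longer coincide; the very same computation gives sufficiency with the stated inequalities (exponent $q^\triangledown$) and necessity with the necessary exponent $q\cdot(r/q)'$, for which $\tfrac{1}{\theta}=\tfrac{1}{q}-\tfrac{1}{r}$, i.e.\ the inequalities with $q^\triangledown$ replaced by $q$.

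I expect the genuine obstacle to be the bookkeeping in the third step: for each of the three orderings of $1,\lambda_1,\lambda_2$ one must check separately which of $b=\beta$ or $b=\beta-k$ yields the tighter bound, and verify the $\min$/$\max$ simplifications --- the $b\lambda_2$-terms in cases (a),(c) collapse to $(\beta-k)\lambda_2$ because $\lambda_2>0$, the plain $b$-terms collapse by monotonicity, but the $b\lambda_1$-terms in cases (b),(c) must be left as $\max\{\beta\lambda_1,(\beta-k)\lambda_1\}$ since $\lambda_1$ may have either sign. Everything beyond this is a mechanical substitution of $\tfrac{1}{\theta}=\tfrac{1}{q^\triangledown}-\tfrac{1}{r}$, because the hard analytic content --- the asymptotics of $\norm{A^{\lambda_1,\lambda_2}_{n,m_1,m_2,1}}$ and the resulting summability ranges --- has already been absorbed into Corollary~\ref{cor:StanardEmbeddingPrep}.
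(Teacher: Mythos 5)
Your proposal is correct and follows essentially the same route as the paper's proof: identify the coorbit space with the decomposition space via Theorem~\ref{thm: FourierIsoCoorbitDecSpaces}, invoke Theorem~\ref{thm:SobolevEmbedding}, compute $\theta = q^\triangledown\left(r/q^\triangledown\right)'$ (so that $\theta=\infty$ iff $r\leq q^\triangledown$ and otherwise $\theta^{-1}=\tfrac{1}{q^\triangledown}-\tfrac{1}{r}$), and then feed $b=\beta$ and $b=\beta-k$ into Corollary~\ref{cor:StanardEmbeddingPrep} via the equivalence~(\ref{eq:sequenceStandardShearlet}). In fact your write-up makes explicit the intersection bookkeeping (which of the two values of $b$ governs each bound, and why the $b\lambda_1$-terms must remain as a maximum) that the paper's proof leaves implicit in the phrase ``leads to the given conditions in the given cases.''
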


\begin{proof}
Essentially, this is an application of Theorem \ref{thm:SobolevEmbedding} and the prior discussion in Section \ref{section:EmbeddingStandardShearlet}. Since
$\zeta^{\lambda_1, \lambda_2} \in \ell^\theta(\Z^3\times\{\pm 1\}) \quad \Longleftrightarrow \quad \psi^{(a, \beta)},\ \psi^{(a, \beta-k)}\in \ell^\theta(\Z^3)$
with $a=\alpha + (1+\lambda_1 + \lambda_2)\gamma$. And the condition 
$\zeta^{\lambda_1, \lambda_2} \in \ell^{q^\triangledown \left(\frac{r}{q^\triangledown}\right)'}(\Z^3)$
is sufficient for the existence of the embedding 
$\Co(\mathrm{L}^{p,r}_{v^{(\alpha, \beta)}}(\R^3\rtimes H^{\lambda_1, \lambda_2}))\hookrightarrow W^{k,q}(\R^3)$ for $q\in (2,\infty)$ and equivalent to it for $q \in (0,2]\cup \{\infty\}$.

We just have to set $\theta := q^\triangledown \left(\frac{r}{q^\triangledown}\right)'$ and notice that
$q^\triangledown \left(\frac{r}{q^\triangledown}\right)' = \infty  \Longleftrightarrow r\leq q^\triangledown$
and that in the case $r> q^\triangledown$, we get
$q^\triangledown \left(\frac{r}{q^\triangledown}\right)' = q^\triangledown \frac{\frac{r}{q^\triangledown}}{\frac{r}{q^\triangledown}-1} = \frac{1}{\frac{1}{q^\triangledown} - \frac{1}{r}}.$
Hence $\theta^\inv= \frac{1}{q^\triangledown} - \frac{1}{r}$, where all these computations also hold for $q=\infty$ or $r=\infty$ or $p=\infty$ with $\frac{1}{\infty}=0$. The application of Corollary \ref{cor:StanardEmbeddingPrep} with these parameters then leads to the given conditions in the given cases.
\end{proof}

\subsection{Embeddings for Toeplitz shearlet groups}\label{section:EmbeddingToeplitzShearlet}

In this section, we consider the class of Toeplitz shearlet groups $H^{\delta}$ from section \ref{section:ShearletGroups} and their associated coorbit spaces in dimension three. Again, according to Theorem \ref{thm: FourierIsoCoorbitDecSpaces} and Lemma \ref{lem:ToeplitzWellSpread}, the map
$\F:\Co(L^{p,r}_v(\R^3 \rtimes H^{\delta} )) \to \mathcal{D}(\mathcal{C}^{\delta}, L^p, \ell^r_u)$
is an isomorphism for $p,r\in (0,\infty]$ and any weight 
$v:H^{\delta}\to (0,\infty)$ that is right moderate with respect to a locally bounded weight on $H^{\delta}$ if $u:I\to(0,\infty)$ is a $\mathcal{C}^{\delta}$-discretization of a decomposition weight for the coorbit space $\Co(L^{p,r}_v(\R^3 \rtimes H^{\delta}))$. The associated weight has the following form 
$
u_{n,m_1,m_2,\epsilon}:=v^{(r)}\left(A^{\delta}_{n, m_1, m_2, \epsilon}\right)
 = \abs*{\det\left(\left(A^{\delta}_{n, m_1, m_2, \epsilon}\right)^\invT\right)}^{\frac{1}{2} - \frac{1}{r}} v\left(\left(A^{\delta}_{n, m_1, m_2, \epsilon}\right)^\invT\right),
$
where the matrices $A^{\delta}_{n, m_1, m_2, \epsilon}$ were defined in Lemma \ref{lem:ToeplitzWellSpread}.

As in the standard shearlet case, we restrict attention to weights 
$
v^{(\alpha, \beta)}: h\mapsto \abs{h_{1,1}}^\alpha \norm*{h^\invT}^\beta,
$
where $\alpha \in \R$ and $\beta \geq 0$. For this specific weight, the associated weight $u^{(\alpha, \beta)}$ for the decomposition space is given by
\begin{align*}
u_{n,m_1,m_2,\epsilon}^{(\alpha, \beta)} &= \abs*{\det\left(\left(A^{\delta}_{n, m_1, m_2, \epsilon}\right)^\invT\right)}^{\frac{1}{2} - \frac{1}{r}} v^{(\alpha, \beta)}\left(\left(A^{\delta}_{n, m_1, m_2, \epsilon}\right)^\invT\right)\\
& = 2^{-3n(1-\delta)(\frac{1}{2}- \frac{1}{r})} 2^{-n\alpha} \norm*{A^{\delta}_{n, m_1, m_2, \epsilon}}^\beta,
\end{align*}
according to Definition \ref{def:DecompositionWeight}, where one has to keep in mind that the matrices $A^{\delta}_{n, m_1, m_2, \epsilon}$ are the inverse transposes of a well spread family.

The weight $w^{\{q\}}$ in Theorem \ref{thm:SobolevEmbedding} is in this setting given by
\begin{align*}
w_{n,m_1,m_2,\epsilon}^{\{q\}}&:= \abs*{\det\left(A^{\delta}_{n, m_1, m_2, \epsilon}\right)}^{\frac{1}{p} - \frac{1}{q}}\left(1 + \norm*{A^{\delta}_{n, m_1, m_2, \epsilon}}^k\right)\\
&= 2^{3n(1-\delta)(\frac{1}{p}- \frac{1}{q})}\left(1 + \norm*{A^{\delta}_{n, m_1, m_2, \epsilon}}^k\right).
\end{align*}

An application of Theorem \ref{thm:SobolevEmbedding} boils down to the study of the sequence $\zeta^{\delta}$ defined by
\begin{align*}
\zeta_{n,m_1,m_2,\epsilon}^{\delta}&:= \frac{w^{(q)}_{n,m_1,m_2,\epsilon}}{u^{(\alpha, \beta)}_{n,m_1,m_2,\epsilon}}
=\frac{2^{3n(1-\delta)(\frac{1}{p}- \frac{1}{q})}\left(1 + \norm*{A^{\delta}_{n, m_1, m_2, \epsilon}}^k\right)}{2^{-3n(1 - \delta)(\frac{1}{2}- \frac{1}{r})} 2^{-n\alpha} \norm*{A^{\delta}_{n, m_1, m_2, \epsilon}}^\beta}\\
&= 2^{n\left[\alpha + 3(1-\delta)(\frac{1}{2}-\frac{1}{r}+\frac{1}{p}-\frac{1}{q})\right]} \left(\norm*{A^{\delta}_{n, m_1, m_2, \epsilon}}^{-\beta} + \norm*{A^{\delta}_{n, m_1, m_2, \epsilon}}^{k-\beta}\right).
\end{align*}

More precisely, we want to characterize $\zeta^{\delta}\in \ell^\theta(I)$ for 
$I=\Z^3\times\{\pm 1\}$
and $\theta\in (0,\infty]$. Since $\zeta_{n,m_1,m_2, 1}^{\delta} = \zeta_{n,m_1,m_2, -1}^{\delta}\geq 0$, it is sufficient to characterize $(\zeta_{n,m_1,m_2,1}^{\delta})_{n,m_1,m_2}\in \ell^\theta(\Z^3)$. Furthermore, $
\zeta_{n,m_1,m_2,1}^{\delta} = \psi_{n,m_1,m_2}^{(a, \beta)} + \psi_{n,m_1,m_2}^{(a, \beta-k)}$
with $\psi_{n,m_1,m_2}^{(a, \beta)}:= 2^{na}\norm*{A^{\delta}_{n, m_1, m_2, 1}}^{-\beta}$ and 
$a:=\alpha + 3(1-\delta)\left(\frac{1}{2}-\frac{1}{r}+\frac{1}{p}-\frac{1}{q}\right).$

Note that $\psi_{n,m_1,m_2}^{(a, b)}\geq 0$, which implies the equivalence
\begin{align*}
\zeta^{\delta} \in \ell^\theta(I) \quad \Longleftrightarrow \quad \psi^{(a, \beta)},\ \psi^{(a, \beta-k)}\in \ell^\theta(\Z^3).
\end{align*}

Hence, our task is again reduced to the investigation of the sequence $\psi^{(a, b)}$ for $a,b \in \R$, and finding a characterization in terms of $a,b, \theta, \delta$ for  $\psi^{(a, b)} \in \ell^\theta(\Z^3)$ with $a,\delta  \in \R, b \geq 0$ and $\theta\in (0,\infty]$. We consider again the norm
$
\norm{h}=\sum_{1\leq i,j \leq 3} |h_{i,j}|.
$
For the same reason as in the standard shearlet case, we restrict the weights $v^{\alpha, \beta}$ to the range of parameters $\alpha\in \R$ and $\beta\geq 0$.

The details of this calculation will not be presented here. Somewhat 
surprisingly, it turns out that no additional, extensive computations of the type presented in Subsection \ref{subsec:StandardShearletCase} have to be performed. The results for the Toeplitz case can be traced back to already established estimates for the standard shearlet case and the choice $\lambda_1 := 1-\delta, \lambda_2:= 1-2\delta$ or variations of this choice. Despite the different shearing behavior of these two classes of groups, the process of applying the embedding results is very similar. For details, we refer to Ref. \refcite{RKDoktorarbeit}. 

\begin{corollary}\label{cor:ResultatToeplitz1}
The following are equivalent to  $\psi^{(a, b)}\in \ell^\theta(\Z^3)$ for $a\in \R$ and $b\geq 0$: if $\delta \geq 0$ 
\begin{align*}
\begin{cases}
b\theta >2,\ b(1-2\delta) < a < -\frac{3\delta}{\theta} + b, & \text{if } \theta \in (0,\infty) \\
b(1-2\delta) \leq a \leq b, & \text{if } \theta = \infty,
\end{cases}
\end{align*}
if $\delta < 0$
\begin{align*}
\begin{cases}
b\theta >2,\ -\frac{3\delta}{\theta} + b < a <  b(1-2\delta), & \text{if } \theta \in (0,\infty) \\
b \leq a \leq b(1-2\delta), & \text{if } \theta = \infty.
\end{cases}
\end{align*}
\end{corollary}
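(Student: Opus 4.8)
The plan is to reduce the summability analysis for the Toeplitz matrices to the one already completed for the standard shearlet groups in Corollary \ref{cor:StanardEmbeddingPrep}, by first computing the matrix norm explicitly and then exploiting a change of summation variable. First I would compute $A^\delta_{n,m_1,m_2,1} = (B^\delta_{n,m_1,m_2,1})^{-T}$ from Lemma \ref{lem:ToeplitzWellSpread}; with the $\ell^1$-type matrix norm fixed earlier, a direct calculation gives
\begin{align*}
\norm{A^\delta_{n,m_1,m_2,1}} \asymp 2^n + 2^{n(1-\delta)}\bigl(1 + |m_1|\bigr) + 2^{n(1-2\delta)}\bigl(1 + |m_1| + |m_1^2 - m_2|\bigr).
\end{align*}
The only genuinely new features compared with the standard case are the appearance of $|m_1^2 - m_2|$ and of an extra $|m_1|$-contribution at the scale $2^{n(1-2\delta)}$. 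For each fixed $n$ and $m_1$, the map $m_2 \mapsto m_1^2 - m_2$ is a bijection of $\Z$, hence of $\Z^3$, and since membership in $\ell^\theta$ is invariant under reindexing by a bijection, I may replace $|m_1^2 - m_2|$ by $|m_2|$ everywhere. Writing $\lambda_1 := 1-\delta$ and $\lambda_2 := 1-2\delta$, the task reduces to characterizing when $2^{na}\bigl(2^n + 2^{n\lambda_1}(1+|m_1|) + 2^{n\lambda_2}(1+|m_1|+|m_2|)\bigr)^{-b}$ lies in $\ell^\theta(\Z^3)$.

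The key step is to split $\Z^3$ into the discrete half-spaces $n \geq 0$ and $n < 0$ and to decide, on each, which scale dominates. For $\delta \geq 0$ one has $\lambda_2 \leq \lambda_1 \leq 1$, and on the half-space $n \geq 0$ the extra term $2^{n\lambda_2}|m_1|$ is absorbed by $2^{n\lambda_1}|m_1|$, so the norm is asymptotically equal to $2^n + 2^{n\lambda_1}|m_1| + 2^{n\lambda_2}|m_2|$: this is exactly the standard norm on the positive half-space, and the corresponding condition ($b\theta>2$ together with the upper bound $a < -\frac{3\delta}{\theta} + b$) can be read off verbatim from Corollary \ref{cor:StanardEmbeddingPrep} in the case $\lambda_2 \leq \lambda_1 \leq 1$. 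On the half-space $n < 0$, by contrast, $2^{n\lambda_2}$ dominates every other scale and the norm collapses to $2^{n\lambda_2}(1+|m_1|+|m_2|)$; the sum then factorizes into a geometric series in $n$ times the two-dimensional sum $\sum_{m_1,m_2}(1+|m_1|+|m_2|)^{-b\theta}$, which converges iff $b\theta>2$, yielding the clean lower bound $a > b(1-2\delta)$. For $\delta < 0$ one has $1 \leq \lambda_1 \leq \lambda_2$ and the two half-spaces swap roles: $n < 0$ reproduces the standard norm (case $1 \leq \lambda_1 \leq \lambda_2$, negative half-space), while $n \geq 0$ is the factorizing regime, producing the upper bound $a < b(1-2\delta)$.

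Combining the two half-space conditions then gives the stated inequalities for $\psi^{(a,b)} \in \ell^\theta(\Z^3)$; the case $\theta = \infty$ is handled identically, replacing the geometric and two-dimensional sums by suprema, so that the condition $b\theta>2$ disappears and the supremum of $(1+|m_1|+|m_2|)^{-b}$ is attained at the origin (using $b \geq 0$), leaving the bound $a \leq b$ from the standard-matching half-space and $a \gtrless b(1-2\delta)$ from the factorizing one. The asymptotic bookkeeping that turns floor/ceiling cut-offs and the dominance claims into clean powers of $2$ is supplied by Lemmas \ref{lem:AsymptoticLemma} and \ref{lem:AsymptoticLemma2}, exactly as in the standard case. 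I expect the main obstacle to be the careful treatment of the extra $2^{n(1-2\delta)}|m_1|$ term: showing that it is dominated on one half-space, so that the standard estimates apply unchanged, but dominant on the other, so that the sum factorizes and the relevant bound on $a$ improves from the value $\frac{\lambda_1-\lambda_2}{\theta} + b(1-2\delta)$ dictated by a naive substitution into the standard result to the cleaner value $b(1-2\delta)$, is precisely what makes the Toeplitz answer differ from the standard one and must be justified with care for both signs of $n$ and of $\delta$.
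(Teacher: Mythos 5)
Your proposal is correct and follows essentially the route the paper itself indicates: the paper gives no detailed proof, stating only that the Toeplitz case ``can be traced back to already established estimates for the standard shearlet case and the choice $\lambda_1 := 1-\delta$, $\lambda_2 := 1-2\delta$ or variations of this choice'' (deferring details to Ref.~\refcite{RKDoktorarbeit}), and your argument is exactly that reduction carried out: the computed norm $2^n + 2^{n(1-\delta)}(1+|m_1|) + 2^{n(1-2\delta)}(1+|m_1|+|m_1^2-m_2|)$, the reindexing bijection $(n,m_1,m_2)\mapsto(n,m_1,m_1^2-m_2)$, and the half-space dominance analysis (standard-matching on one half, factorizing into $2^{n(1-2\delta)}(1+|m_1|+|m_2|)$ on the other) are all correct and reproduce the stated inequalities, including the sharpened bound $b(1-2\delta)$ in place of the naive $\frac{\delta}{\theta}+b(1-2\delta)$. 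The only small imprecision is citing Corollary \ref{cor:StanardEmbeddingPrep} (a full-$\Z^3$ statement) for single-half-space conditions, which strictly requires the half-space breakdown of the standard-case proofs (given in the paper only for $1\leq\lambda_1\leq\lambda_2$), but that breakdown is routine and your identification of which bound arises from which half-space is accurate.
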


These conditions can again be used to get an embedding result.

\begin{theorem}\label{Cor:EmbeddingToeplitzLong}
Let $p, q, r \in (0,\infty]$, $\alpha \in \R$, $\beta \geq 0$, $k\in \N_0$ and set 
$$\gamma:=\left(\frac{1}{2}-\frac{1}{r}+\frac{1}{p}-\frac{1}{q}\right).$$
The embedding $\Co(\mathrm{L}^{p,r}_{v^{(\alpha, \beta)}}(\R^3\rtimes H^{\delta}))\hookrightarrow W^{k,q}(\R^3)$ holds for 
$q \in (0,2]\cup \{\infty\},$
if and only if $p\leq q$ and
\begin{enumerate}[i)]
\item for $r\leq q^\triangledown$
	\begin{enumerate}
	\item the inequalities
    \begin{align*}
    \max\{\beta(1-2\delta), (\beta- k)(1-2\delta)\}\leq \alpha + 3(1-\delta)\gamma \leq \beta-k
    \end{align*}
    hold if $\delta \geq 0$,
    
	\item the inequalities
    \begin{align*}
    \beta \leq \alpha + 3(1-\delta)\gamma \leq (\beta-k)(1-2\delta)
    \end{align*}
    hold if $\delta < 0$,
	\end{enumerate}
    
\item for $r> q^\triangledown$ the inequality $\beta > k +2\left(\frac{1}{q^\triangledown} - \frac{1}{r}\right)$ and
	\begin{enumerate}
	\item the inequalities
   \begin{align*}
           \max\{\beta(1-2\delta), (\beta-k)(1-2\delta)\}
    <  \alpha + 3(1-\delta)\gamma
    < -3\delta\left(\frac{1}{q^\triangledown} - \frac{1}{r}\right) + \beta - k
   \end{align*}

    hold if $\delta \geq 0$,
    
	\item the inequalities
    \begin{align*}
    -3\delta\left(\frac{1}{q^\triangledown} - \frac{1}{r}\right) + \beta
    <  \alpha + 3(1-\delta)\gamma
    < (\beta-k)(1-2\delta)
    \end{align*}
    hold if $\delta < 0$.
	\end{enumerate}
\end{enumerate}

For the remaining case $q\in (2,\infty)$, the given conditions are sufficient for the embedding $\Co(\mathrm{L}^{p,r}_{v^{(\alpha, \beta)}}(\R^3\rtimes H^{\delta})\hookrightarrow W^{k,q}(\R^3)$, and necessary for this embedding if one replaces $q^\triangledown$ with $q$ in the inequalities.
\end{theorem}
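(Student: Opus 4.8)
The plan is to derive this theorem exactly as its standard-shearlet counterpart Theorem~\ref{Cor:EmbeddingStandardLong}, namely as a direct application of Voigtlaender's embedding criterion Theorem~\ref{thm:SobolevEmbedding} together with the reduction carried out in Section~\ref{section:EmbeddingToeplitzShearlet} and the summability characterization Corollary~\ref{cor:ResultatToeplitz1}. The starting point is that, by the discussion preceding Corollary~\ref{cor:ResultatToeplitz1}, the quotient weight $\zeta^\delta = w^{\{q\}}/u^{(\alpha,\beta)}$ lies in $\ell^\theta(I)$ with $I = \Z^3\times\{\pm1\}$ if and only if both $\psi^{(a,\beta)}$ and $\psi^{(a,\beta-k)}$ lie in $\ell^\theta(\Z^3)$, where $a := \alpha + 3(1-\delta)\gamma$.

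First I would identify the correct summability exponent. Theorem~\ref{thm:SobolevEmbedding} tells us that $p\leq q$ is necessary and that the relevant condition is membership of $\zeta^\delta$ in $\ell^{\theta}(I)$ for $\theta := q^\triangledown(r/q^\triangledown)'$ in the sufficient direction, and for the analogous exponent with $q^\triangledown$ replaced by $q$ in the necessary direction; these two exponents agree precisely for $q\in(0,2]\cup\{\infty\}$, which is what yields the equivalence in that range and only the one-sided statements for $q\in(2,\infty)$. Setting $\theta = q^\triangledown(r/q^\triangledown)'$, I would record the elementary identities $q^\triangledown(r/q^\triangledown)' = \infty \iff r\leq q^\triangledown$ and, in the complementary case $r>q^\triangledown$, $\theta^{-1} = \tfrac{1}{q^\triangledown}-\tfrac{1}{r}$, all valid with the convention $\tfrac{1}{\infty}=0$. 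This cleanly separates the two regimes $r\leq q^\triangledown$ (where $\theta=\infty$) and $r>q^\triangledown$ (where $\theta$ is finite), matching parts~i) and~ii) of the statement.

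Next I would feed each of $\psi^{(a,\beta)}$ and $\psi^{(a,\beta-k)}$ into Corollary~\ref{cor:ResultatToeplitz1} (applied with $b=\beta$ and $b=\beta-k$ respectively) and intersect the resulting conditions. In the finite-$\theta$ case for $\delta\geq0$, the two lower bounds $\beta(1-2\delta)<a$ and $(\beta-k)(1-2\delta)<a$ combine into $\max\{\beta(1-2\delta),(\beta-k)(1-2\delta)\}<a$, the two upper bounds combine into $a < -3\delta\,\theta^{-1}+(\beta-k)$ (the smaller of the two, since $\beta-k\leq\beta$), and the two summability thresholds $\beta\theta>2$ and $(\beta-k)\theta>2$ collapse to the single stronger requirement $\beta>k+2(\tfrac{1}{q^\triangledown}-\tfrac{1}{r})$; substituting $\theta^{-1}=\tfrac{1}{q^\triangledown}-\tfrac{1}{r}$ produces exactly the inequalities listed in ii)(a). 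The case $\theta=\infty$ is handled identically using the $\ell^\infty$-lines of Corollary~\ref{cor:ResultatToeplitz1}, giving i)(a), and the sign change in Corollary~\ref{cor:ResultatToeplitz1} for $\delta<0$ selects the opposite branches of the two bounds and yields parts i)(b) and ii)(b).

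I expect the only real work to be bookkeeping: correctly tracking which of the two candidate bounds (from $b=\beta$ and $b=\beta-k$) dominates after the substitution $a=\alpha+3(1-\delta)\gamma$. For $\delta\geq0$ the $\max$ in the lower bound is genuinely needed, since the sign of $1-2\delta$ is not fixed there, whereas the upper bound always selects the $b=\beta-k$ branch; for $\delta<0$ one has $1-2\delta>0$ throughout, so both extrema are attained at a fixed branch and no $\max$ survives, as reflected in the cleaner form of parts i)(b) and ii)(b). The main conceptual point --- that no new summability estimates are required and that everything reduces to Corollary~\ref{cor:ResultatToeplitz1} --- is already in hand; the remaining steps are the routine intersection of finitely many affine inequalities together with the exponent identities recorded above.
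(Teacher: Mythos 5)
Your proposal is correct and is essentially the paper's own argument: the paper proves this theorem exactly as it proves Theorem~\ref{Cor:EmbeddingStandardLong}, namely by applying Theorem~\ref{thm:SobolevEmbedding} with $\theta = q^\triangledown(r/q^\triangledown)'$ (using the identities $\theta=\infty \Leftrightarrow r\leq q^\triangledown$ and $\theta^{-1}=\tfrac{1}{q^\triangledown}-\tfrac{1}{r}$ otherwise), reducing to $\psi^{(a,\beta)},\psi^{(a,\beta-k)}\in\ell^\theta(\Z^3)$ with $a=\alpha+3(1-\delta)\gamma$, and intersecting the conditions from Corollary~\ref{cor:ResultatToeplitz1}. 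Your bookkeeping of which branch dominates in each bound (the $\max$ for $\delta\geq 0$, the fixed branches for $\delta<0$, and the collapse of the two summability thresholds to $\beta>k+2(\tfrac{1}{q^\triangledown}-\tfrac{1}{r})$) matches the stated inequalities.
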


\section{Interpretation of embedding results. Coorbit spaces as smoothness spaces}
\label{Consequences}

In this section, we give a sample application of the results attained in Subsection \ref{section:EmbeddingStandardShearlet} and Section \ref{section:EmbeddingToeplitzShearlet} and compare the embedding behavior of the spaces  
$$\Co(\mathrm{L}^{p}_{v^{(\alpha, \beta)}}(\R^3\rtimes H^{\delta}))\quad \text{and}\quad \Co(\mathrm{L}^{p}_{v^{(\alpha, \beta)}}(\R^3\rtimes H^{\lambda_1, \lambda_2}))$$
for $q\in (0,2]$. We choose $p=r$ in this section to illustrate some consequences of these embedding results in a particularly transparent case.

To make precise what we mean by the embedding behavior of different groups, we make the following definition. 

\begin{definition}
We say that two shearlet dilation groups 
$H_1, H_2 \subset \GL(\R^3)$
have \textit{the same embedding behavior} if for all 
$q\in (0,2],\ p\in (0,\infty],\ \alpha, \beta \in \R$
with $\beta\geq 0$ and $k\in \N_0$, the following equivalence holds
\begin{align*}
\Co(\mathrm{L}^p_{v^{(\alpha, \beta)}}(\R^3\rtimes H_1)) \hookrightarrow W^{k,q}(\R^3)
\Longleftrightarrow \Co(\mathrm{L}^p_{v^{(\alpha, \beta)}}(\R^3\rtimes H_2)) \hookrightarrow W^{k,q}(\R^3).
\end{align*}
If this does not hold, we say $H_1, H_2$ have \textit{different embedding behavior}.
\end{definition}

\begin{remark} \label{rem:embedding_1}
\begin{enumerate}[i)]
\item We restrict this definition to $q\in (0,2]$ because the inequalities in Theorem \ref{Cor:EmbeddingStandardLong} and Theorem \ref{Cor:EmbeddingToeplitzLong} provide sufficient and necessary conditions  for the existence of such an embedding. It is worth noting again that $p$ has no influence on the existence of such an embedding for $q$ in this range, except that the condition $p\leq q$ has to be fulfilled.

\item The relation of having the same embedding behavior is clearly an equivalence relation on the set of shearlet dilation groups in three dimensions.
\end{enumerate}
\end{remark}

As simple example for the kind of result we are interested in establishing in this section, we show that there does not hold an embedding into Sobolev spaces with at least one weak derivative for constant weights (i.e. $\alpha=\beta=0$).    

\begin{corollary} \label{cor:no_embedding}
Let $p, q \in (0,2]$, $k\in \N_0$ be arbitrary. Necessary for the embedding 
$$\Co(\mathrm{L}^{p}_{v^{(\alpha, \beta)}}(\R^3\rtimes H^{\lambda_1, \lambda_2}))\hookrightarrow W^{k,q}(\R^3)$$
is $\beta\geq k$.
\end{corollary}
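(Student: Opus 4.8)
The plan is to bypass the sub-case analysis of Theorem \ref{Cor:EmbeddingStandardLong} and instead trace the embedding back to a single necessary condition on the auxiliary sequence $\psi^{(a,\beta-k)}$, which already forces $\beta \geq k$ by itself.

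First I would record that for $q \in (0,2]$ one has $q^\triangledown = q$, and that (by Theorem \ref{thm:SobolevEmbedding} together with the reduction carried out in Section \ref{section:EmbeddingStandardShearlet}, using $r = p$) the assumed embedding $\Co(\mathrm{L}^{p}_{v^{(\alpha,\beta)}}(\R^3 \rtimes H^{\lambda_1,\lambda_2})) \hookrightarrow W^{k,q}(\R^3)$ implies $\zeta^{\lambda_1,\lambda_2} \in \ell^\theta(I)$ for the relevant exponent $\theta = q^\triangledown(r/q^\triangledown)'$. By the equivalence \eqref{eq:sequenceStandardShearlet} this yields, in particular, $\psi^{(a,\beta-k)} \in \ell^\theta(\Z^3)$, where $a = \alpha + (1+\lambda_1+\lambda_2)\gamma$; the precise value of $a$ will be irrelevant below.

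Next, since $\ell^\theta(\Z^3) \subseteq \ell^\infty(\Z^3)$ for every $\theta \in (0,\infty]$, it suffices to deduce $\beta \geq k$ from $\psi^{(a,\beta-k)} \in \ell^\infty(\Z^3)$. For this I would restrict to the slice $n = 0$ and use the identity already computed in Section \ref{section:EmbeddingStandardShearlet},
\begin{align*}
\psi^{(a,\beta-k)}_{0,m_1,m_2} = \norm*{A^{\lambda_1,\lambda_2}_{0,m_1,m_2,1}}^{-(\beta-k)} = \left(3 + |m_1| + |m_2|\right)^{-(\beta-k)} ,
\end{align*}
where the dilation factor $2^{na}$ becomes $1$ for $n = 0$, so the exponent $a$ disappears entirely. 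If $\beta - k < 0$, the right-hand side tends to $+\infty$ as $|m_1| + |m_2| \to \infty$, contradicting boundedness. Hence $\beta - k \geq 0$, i.e. $\beta \geq k$.

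I do not anticipate a serious obstacle: the argument is uniform in $\lambda_1, \lambda_2$ and uses no case distinction, the only delicate point being the bookkeeping that isolates $\psi^{(a,\beta-k)}$ inside $\zeta^{\lambda_1,\lambda_2}$ and the remark that the slice $n=0$ annihilates the scaling exponent. One could alternatively extract $\beta \geq k$ from the compatibility of the lower and upper bounds in each of the sub-cases (a)--(c) of Theorem \ref{Cor:EmbeddingStandardLong}; however, that route is less transparent and requires separate handling of the degenerate configurations with $\lambda_1 = \lambda_2 = 1$, which the present approach avoids.
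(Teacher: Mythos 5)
Your proof is correct and is essentially the paper's own argument written out in full: the paper's one-line proof invokes exactly the "earlier observation" that $\psi^{(a,b)}_{0,m_1,m_2} = (3+|m_1|+|m_2|)^{-b}$ forces $b \geq 0$ for membership in $\ell^\theta(\Z^3) \subset \ell^\infty(\Z^3)$, applied with $b = \beta - k$ after reducing the embedding to $\zeta^{\lambda_1,\lambda_2} \in \ell^\theta(I)$ via Theorem \ref{thm:SobolevEmbedding} and the equivalence (\ref{eq:sequenceStandardShearlet}). Your additional bookkeeping ($q^\triangledown = q$ on $(0,2]$, taking $r = p$, isolating the summand $\psi^{(a,\beta-k)}$, and noting that the slice $n=0$ kills the exponent $a$) just makes explicit the steps the paper leaves implicit.
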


\begin{proof}
This follows from the earlier observation regarding the sequence $\zeta^{\lambda_1, \lambda_2}$ after the equivalence in (\ref{eq:sequenceStandardShearlet}).  
\end{proof}


The above Corollary has the consequence that in the unweighted case $v\equiv 1$, there are not any smooth functions in the space $\Co(L^{p}(\R^3\rtimes H^{\lambda_1, \lambda_2}))$ if we restrict the search for smooth functions to $W^{k,q}(\R^3)$ for $p, q \in (0,2]$ and $k\geq 1$. This is somewhat remarkable since smaller integrability exponents guarantee a sharper decay condition on the coefficients in the atomic decomposition of $f\in \Co(L^{p}(\R^3\rtimes H^{\lambda_1, \lambda_2}))$ (see Ref. \refcite{Voigtlaender2015PHD} Theorem 2.4.19) and this leads in many instances to higher regularity properties of the functions in the space. However, for some parameters, the intuition that smaller $p$ leads to higher regularity is indeed true as we see in Remark \ref{rem:SmallerSmoother}.

\subsection{Characterizing groups by their embedding behavior}

In this subsection, we clarify when coorbit spaces associated to two different shearlet dilation groups have the same embedding behavior. This question is interesting for three reasons: It allows to determine which components of shearlet groups are truly relevant for embedding theorems; as it turns out, the shearing parts are not relevant (see the remark below). Secondly, these results contrast nicely to the fact, established in Ref. \refcite{RKDoktorarbeit}, that for any pair of distinct shearlet groups in dimension three, the induced scales of coorbit spaces do differ. Hence the embedding behavior into Sobolev spaces does not generally allow to distinguish between shearlet groups. Thirdly, understanding when two shearlet groups induce the same embedding behavior simplifies the subsequent further discussion, by allowing to concentrate on certain subcases.

We first observe that we can actually restrict the question of different embedding behaviors to the class of standard shearlet groups. 

\begin{corollary}\label{Cor:EmbeddingBehavior1}
The groups $H^{\lambda_1, \lambda_2}$ and $H^{\lambda_2, \lambda_1}$ have the same embedding behavior and the groups $H^{\delta}$ and $H^{1-\delta, 1-2\delta}$ have the same embedding behavior for $\lambda_1, \lambda_2, \delta \in \R$.
\end{corollary}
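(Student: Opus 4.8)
The plan is to deduce both statements directly from the explicit characterizations in Theorems \ref{Cor:EmbeddingStandardLong} and \ref{Cor:EmbeddingToeplitzLong}, which via the equivalence (\ref{eq:sequenceStandardShearlet}) reduce the existence of an embedding to the $\ell^\theta$-summability of the sequences $\psi^{(a,\beta)}$ and $\psi^{(a,\beta-k)}$. For the first assertion I would argue at the level of these sequences: the reflection $\sigma\colon(n,m_1,m_2)\mapsto(n,m_2,m_1)$ is a bijection of $\Z^3$, and since $\norm*{A^{\lambda_1,\lambda_2}_{n,m_1,m_2,1}}=2^n+2^{n\lambda_1}+2^{n\lambda_2}+2^{n\lambda_1}|m_1|+2^{n\lambda_2}|m_2|$, one checks immediately that $\norm*{A^{\lambda_2,\lambda_1}_{\sigma(n,m_1,m_2),1}}=\norm*{A^{\lambda_1,\lambda_2}_{n,m_1,m_2,1}}$. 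Moreover the exponent $a=\alpha+(1+\lambda_1+\lambda_2)\gamma$ depends on $\lambda_1,\lambda_2$ only through the symmetric sum $\lambda_1+\lambda_2$, so it is unchanged under the swap. Hence the sequences for $H^{\lambda_1,\lambda_2}$ and $H^{\lambda_2,\lambda_1}$ are permutations of one another, their $\ell^\theta$-(quasi)norms agree for every $\theta$, and the two groups admit exactly the same embeddings; this is precisely the symmetry already recorded in the last line of Theorem \ref{Cor:EmbeddingStandardLong}.

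For the second assertion, the first step is to match the scaling exponent: since $1+(1-\delta)+(1-2\delta)=3(1-\delta)$, the scalar $a$ appearing in the Toeplitz sequence problem coincides with the one for the standard group $H^{1-\delta,1-2\delta}$. The decisive observation is then that the notion of equal embedding behavior only quantifies over $q\in(0,2]$ and over the unmixed exponent $r=p$, and that the embedding forces the necessary condition $p\le q$ (for $p>q$ neither embedding exists and the claimed equivalence is vacuous). For $q\in(0,2]$ one has $q^\triangledown=q$, whence $r=p\le q=q^\triangledown$; consequently $\theta=q^\triangledown(r/q^\triangledown)'=\infty$, so only case i) ($r\le q^\triangledown$) of Theorems \ref{Cor:EmbeddingStandardLong} and \ref{Cor:EmbeddingToeplitzLong} can ever be invoked.

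It then remains to verify that the case-i) inequalities agree. For $\delta\ge0$ one has $1-2\delta\le1-\delta\le1$; using the first assertion to replace $H^{1-\delta,1-2\delta}$ by $H^{1-2\delta,1-\delta}$, this is the standard case $\lambda_1\le\lambda_2\le1$ with $\lambda_1=1-2\delta$, and its inequalities $\max\{\beta\lambda_1,(\beta-k)\lambda_1\}\le a\le\beta-k$ become literally the Toeplitz inequalities $\max\{\beta(1-2\delta),(\beta-k)(1-2\delta)\}\le a\le\beta-k$ of case i)a). For $\delta<0$ one has $1\le1-\delta\le1-2\delta$, so $H^{1-\delta,1-2\delta}$ is the standard case $1\le\lambda_1\le\lambda_2$ with $\lambda_2=1-2\delta$, and its inequalities $\beta\le a\le(\beta-k)\lambda_2$ match the Toeplitz case i)b) inequalities $\beta\le a\le(\beta-k)(1-2\delta)$. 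Since the conditions coincide in every case, the two groups have the same embedding behavior.

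The step I expect to require the most care is the reduction to $\theta=\infty$. At a finite summability exponent the Toeplitz conditions of Corollary \ref{cor:ResultatToeplitz1} and the ones obtained by the naive substitution $\lambda_1=1-\delta,\lambda_2=1-2\delta$ in Corollary \ref{cor:StanardEmbeddingPrep} do \emph{not} agree --- their lower bounds differ by $\delta/\theta$, which reflects the genuinely different shearing geometry carried by the coupled $(2,3)$-entry of $B^\delta_{n,m_1,m_2,\epsilon}$. This discrepancy is harmless here only because the comparison of embedding behavior lives entirely in the range $q\le2$, i.e.\ in the regime $r\le q^\triangledown$ where $\theta=\infty$ and both families of inequalities collapse to the same endpoint conditions. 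Confirming that one never leaves this regime is therefore the one point that must be checked explicitly; the remaining verifications are routine substitutions.
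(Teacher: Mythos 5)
Your proof is correct and follows essentially the same route as the paper, whose own proof is the one-line remark that the claim ``follows immediately by comparing the inequalities given in Theorem \ref{Cor:EmbeddingStandardLong} and Theorem \ref{Cor:EmbeddingToeplitzLong}.'' You simply make the comparison explicit, and your two supporting observations --- the symmetry of $\norm*{A^{\lambda_1,\lambda_2}_{n,m_1,m_2,1}}$ and of $a$ under the swap $(\lambda_1,m_1)\leftrightarrow(\lambda_2,m_2)$, and the fact that the definition's restriction to $q\in(0,2]$ and $r=p$ forces $r\le q^\triangledown$ (so $\theta=\infty$ and only the case-i) inequalities are ever in play, which is exactly where the $\delta/\theta$ discrepancy at finite $\theta$ disappears) --- are both accurate and are precisely what the paper's terse proof leaves implicit.
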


\begin{proof}
This follows immediately by comparing the inequalities given in Theorem \ref{Cor:EmbeddingStandardLong} and Theorem \ref{Cor:EmbeddingToeplitzLong}.
\end{proof}

\begin{remark}
\begin{enumerate}[i)]
\item To phrase this differently, the embedding behavior of shearlet dilation groups in dimension three into Sobolev spaces in our setting is completely determined by the diagonal entries of the group elements. The different shearing parts of Toeplitz and standard shearlet groups have no effect on the existence of embeddings.

\item This also means that we can restrict our attention to the group $H^{\lambda_1, \lambda_2}$ with $\lambda_1 \leq \lambda_2$, when we discuss the embedding behaviour of shearlet coorbit spaces in more detail. The next result shows that within this smaller class, different groups have different embedding behaviors.
\end{enumerate}
\end{remark}

\begin{theorem}\label{Cor:SameEmbeddingStandard}
For $\lambda_1, \lambda_2, \lambda_1', \lambda_2' \in \R$, the groups $H^{\lambda_1, \lambda_2}$ and $H^{\lambda_1', \lambda_2'}$ have the same embedding behavior if and only if $\{\lambda_1, \lambda_2\}=\{\lambda_1', \lambda_2'\}$.
\end{theorem}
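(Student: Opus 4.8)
The statement splits into an easy and a hard direction. For the implication $\{\lambda_1,\lambda_2\}=\{\lambda_1',\lambda_2'\}\Rightarrow$ same embedding behavior, note that either the ordered pairs coincide, in which case the groups are literally equal, or they are interchanged, in which case Corollary \ref{Cor:EmbeddingBehavior1} (stating that $H^{\lambda_1,\lambda_2}$ and $H^{\lambda_2,\lambda_1}$ share their embedding behavior) applies. So the plan is to concentrate on the converse, where I will extract from the embedding behavior three real invariants of the group which, for ordered exponents, pin down the pair $(\lambda_1,\lambda_2)$. By Corollary \ref{Cor:EmbeddingBehavior1} I may assume throughout that $\lambda_1\le\lambda_2$ and $\lambda_1'\le\lambda_2'$, so that it suffices to show that equal embedding behavior forces equal ordered pairs.

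First I would specialize Theorem \ref{Cor:EmbeddingStandardLong} to the situation of the definition of embedding behavior, i.e.\ $r=p$ and $q\in(0,2]$. Then $q^\triangledown=q$ and $\gamma=\tfrac12-\tfrac1q$, and whenever $p\le q$ holds (which is forced) we are always in case i). Taking $k=0$ and checking the three subcases $1\le\lambda_1\le\lambda_2$, $\lambda_1\le\lambda_2\le1$ and $\lambda_1\le1\le\lambda_2$ separately, each of the listed bounds collapses into the single uniform equivalence: the embedding $\Co(\mathrm{L}^p_{v^{(\alpha,\beta)}}(\R^3\rtimes H^{\lambda_1,\lambda_2}))\hookrightarrow W^{0,q}(\R^3)$ holds if and only if $p\le q$ and
\[
\beta\min\{\lambda_1,1\}\ \le\ \alpha+(1+\lambda_1+\lambda_2)\gamma\ \le\ \beta\max\{\lambda_2,1\}.
\]
Writing $m:=\min\{\lambda_1,1\}$, $M:=\max\{\lambda_2,1\}$ and $\sigma:=\lambda_1+\lambda_2$, this is the only computation that requires genuine care, including the boundary values $\lambda_i=1$.

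With this formula in hand the invariants are read off by choosing slices. At $\beta=1$, $k=0$, $q=2$ (so $\gamma=0$) and $p=2$, the set of admissible $\alpha$ is exactly the interval $[m,M]$, so the embedding behavior determines both $m$ and $M$. Fixing any $q_0\in(0,2)$, for which $\gamma_0:=\tfrac12-\tfrac1{q_0}\neq0$, and taking $\beta=1$, $k=0$, $p=q_0$, the admissible set of $\alpha$ becomes $[m-(1+\sigma)\gamma_0,\ M-(1+\sigma)\gamma_0]$; since $M$ and $\gamma_0$ are already known, its right endpoint yields $1+\sigma$, hence $\sigma$. Consequently two groups with the same embedding behavior share the same triple $(m,M,\sigma)$.

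It remains to observe the determination lemma: for $\lambda_1\le\lambda_2$ the triple $(m,M,\sigma)$ recovers $(\lambda_1,\lambda_2)$. Indeed $M\ge1$ always; if $M>1$ then necessarily $\lambda_2=M$ and $\lambda_1=\sigma-M$, whereas if $M=1$ then $\lambda_2\le1$ forces $m=\lambda_1$, so $\lambda_1=m$ and $\lambda_2=\sigma-m$. Hence equal triples force equal ordered pairs, and therefore equal embedding behavior implies $(\lambda_1,\lambda_2)=(\lambda_1',\lambda_2')$, i.e.\ $\{\lambda_1,\lambda_2\}=\{\lambda_1',\lambda_2'\}$. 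The main obstacle is thus not the final combinatorics but the careful verification that the three case-i) subcases of Theorem \ref{Cor:EmbeddingStandardLong} genuinely merge into the single $\min$/$\max$ expression above; once that is established, the extraction of $(m,M,\sigma)$ and the determination lemma are elementary.
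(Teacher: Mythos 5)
Your proposal is correct, and it takes a genuinely different route from the paper's proof, although both arguments rest on the same two ingredients (Corollary \ref{Cor:EmbeddingBehavior1} for the easy direction and the reduction to ordered pairs, and the specialization of Theorem \ref{Cor:EmbeddingStandardLong} to $r=p$, $q\in(0,2]$, where $q^\triangledown=q$ and $\gamma=\tfrac12-\tfrac1q$). The paper keeps $k\in\N_0$ general, records the three regime inequalities $I_1,I_2,I_3$, and then runs a nine-fold case distinction over which regime each of the two groups lies in; in every case it equates the lower and upper endpoints of the admissible $\alpha$-interval and specializes parameters ad hoc ($q=2$ so $\gamma=0$, $\beta-k=1$, then some $q\neq 2$, sign arguments for $\lambda_1$, etc.) to force $\lambda_1=\lambda_1'$, $\lambda_2=\lambda_2'$. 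You instead set $k=0$ from the outset and observe that the three subcases of case i) then collapse into the single condition $\beta\min\{\lambda_1,1\}\le\alpha+(1+\lambda_1+\lambda_2)\gamma\le\beta\max\{\lambda_2,1\}$ (this merge is indeed valid, including at the boundaries $\lambda_i=1$, since for $k=0$ the terms $\max\{\beta\lambda_1,(\beta-k)\lambda_1\}$ and $(\beta-k)\lambda_2$ reduce to $\beta\lambda_1$ and $\beta\lambda_2$); from there two parameter slices ($q=p=2$, then $q=p=q_0<2$) recover the triple $(\min\{\lambda_1,1\},\max\{\lambda_2,1\},\lambda_1+\lambda_2)$ as invariants of the embedding behavior, and your determination lemma shows this triple is injective on ordered pairs. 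What your approach buys is the elimination of the case explosion: instead of matching regimes of the two groups pairwise, you exhibit a uniform description and a complete set of numerical invariants, which makes the logic more transparent. What the paper's approach retains is that its endpoint-matching equalities are stated with general $k$ and hence sit closer to the full strength of Theorem \ref{Cor:EmbeddingStandardLong}, but for the purpose of this theorem that generality is not needed, and your restriction to $k=0$, $\beta=1$ loses nothing since equal embedding behavior in particular implies equal behavior on any parameter slice.
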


\begin{proof} We already know that the embedding behavior is the same if $\{\lambda_1, \lambda_2\}=\{\lambda_1', \lambda_2'\}$, according to Corollary \ref{Cor:EmbeddingBehavior1} and that it suffices to consider the case $\lambda_1 \leq \lambda_2$ and $\lambda_1' \leq \lambda_2'$. Let $p\in (0,\infty]$, $q\in (0,2]$, $\alpha\in \R$, $\beta \geq 0$ and $k\in \N_0$ and $\gamma = 1/2-1/q$ in this proof. According to Theorem \ref{Cor:EmbeddingStandardLong}, the embedding
\begin{align*}
\Co(\mathrm{L}^p_{v^{(\alpha, \beta)}}(\R^3\rtimes H^{\lambda_1, \lambda_2})) \hookrightarrow W^{k,q}(\R^3)
\end{align*}
holds if and only if $p\leq q$ and
\begin{align*}
\beta - (1+\lambda_1 + \lambda_2)\gamma \leq \alpha \leq (\beta-k)\lambda_2 -  (1+\lambda_1 + \lambda_2)\gamma
\end{align*}
for $1\leq \lambda_1 \leq \lambda_2$, denote this inequality with $I_1(\lambda_1, \lambda_2)$ and
\begin{align*}
    \max\{\beta \lambda_1, (\beta-k)\lambda_1\} - (1+\lambda_1 + \lambda_2)\gamma \leq \alpha
\leq (\beta-k) -  (1+\lambda_1 + \lambda_2)\gamma
\end{align*}

for $\lambda_1 \leq \lambda_2 \leq 1$, denote this inequality with $I_2(\lambda_1, \lambda_2)$ and
\begin{align*}
    \max\{\beta \lambda_1, (\beta-k)\lambda_1\} - (1+\lambda_1 + \lambda_2)\gamma \leq \alpha
\leq (\beta-k)\lambda_2 -  (1+\lambda_1 + \lambda_2)\gamma
\end{align*}

for $\lambda_1 \leq 1 \leq \lambda_2 $, denote this inequality with $I_3(\lambda_1, \lambda_2)$.

Assume that the groups $H^{\lambda_1, \lambda_2}$ and $H^{\lambda_1', \lambda_2'}$ have the same embedding behavior and let always be $p\leq q$.

\begin{enumerate}[i)]
\item If $1\leq \lambda_1 \leq \lambda_2$ and $1\leq \lambda_1' \leq \lambda_2'$, this entails that the inequalities $I_1(\lambda_1, \lambda_2)$ and $I_1(\lambda_1', \lambda_2')$ are equivalent for all choices of parameters $p\in (0,\infty]$, $q\in (0,2]$, $\alpha\in \R$, $\beta \geq 0$ and $k\in \N_0$. This implies the equalities
$
\beta - (1+\lambda_1 + \lambda_2)\gamma  = \beta - (1+\lambda_1' + \lambda_2')\gamma
$
and
$
(\beta-k)\lambda_2 -  (1+\lambda_1 + \lambda_2)\gamma = (\beta-k)\lambda_2' -  (1+\lambda_1' + \lambda_2')\gamma.
$
for all $p\in (0,\infty]$, $q\in (0,2]$, $\alpha\in \R$, $\beta \geq 0$ and $k\in \N_0$ (the lower and upper bounds of the inequalities for $\alpha$ have to coincide). The second equality implies for $q=2$ ($\gamma = 0$) and $\beta-k=1 \neq 0$ the equality $\lambda_2 = \lambda_2'$. Then the first equality implies for some $q\neq 2$ ($\gamma \neq 0$) the equality $\lambda_1 = \lambda_1'$.

\item If $1\leq \lambda_1 \leq \lambda_2$ and $\lambda_1' \leq \lambda_2' \leq 1$, this entails that the inequalities $I_1(\lambda_1, \lambda_2)$ and $I_2(\lambda_1', \lambda_2')$ are equivalent for all choices of parameters $p\in (0,\infty]$, $q\in (0,2]\cup$, $\alpha\in \R$, $\beta \geq 0$ and $k\in \N_0$. This implies the equalities
$
\beta - (1+\lambda_1 + \lambda_2)\gamma  = \max\{\beta \lambda_1', (\beta-k)\lambda_1'\} - (1+\lambda_1' + \lambda_2')\gamma
$and$
(\beta-k)\lambda_2 -  (1+\lambda_1 + \lambda_2)\gamma = (\beta-k) -  (1+\lambda_1' + \lambda_2')\gamma.
$
for all $p\in (0,\infty]$, $q\in (0,2]$, $\alpha\in \R$, $\beta \geq 0$ and $k\in \N_0$ (the lower and upper bounds of the inequalities for $\alpha$ have to coincide). The second equality implies for $q=2$ ($\gamma = 0$) and $\beta-k=1 \neq 0$ the equality $\lambda_2 = 1 $. The first equality implies for $q=2$ ($\gamma=0$) and some $\beta >0$ first the inequality $\lambda_1' \geq 0$ and then $\lambda_1'=1$. But this implies already 
$
\lambda_1 = \lambda_1' = \lambda_2 = \lambda_2' =1.
$

\item If $1\leq \lambda_1 \leq \lambda_2$ and $\lambda_1' \leq 1 \leq \lambda_2' $, this entails that the inequalities $I_1(\lambda_1, \lambda_2)$ and $I_3(\lambda_1', \lambda_2')$ are equivalent for all choices of parameters $p\in (0,\infty]$, $q\in (0,2]$, $\alpha\in \R$, $\beta \geq 0$ and $k\in \N_0$. This implies the equalities
$
\beta - (1+\lambda_1 + \lambda_2)\gamma  = \max\{\beta \lambda_1', (\beta-k)\lambda_1'\} - (1+\lambda_1' + \lambda_2')\gamma
$ and 
$
(\beta-k)\lambda_2 -  (1+\lambda_1 + \lambda_2)\gamma = (\beta-k)\lambda_2' -  (1+\lambda_1' + \lambda_2')\gamma.
$
for all $p\in (0,\infty]$, $q\in (0,2]$, $\alpha\in \R$, $\beta \geq 0$ and $k\in \N_0$ (the lower and upper bounds of the inequalities for $\alpha$ have to coincide). The second equality implies for $q=2$ ($\gamma = 0$) and $\beta-k=1 \neq 0$ the equality $\lambda_2 = \lambda_2' $. If we then cancel $(\beta-k)\lambda_2$ on both sides of the second equality and consider some $q \neq 2$ ($\gamma \neq 0$), we get $\lambda_1 = \lambda_1'$.

\item If $\lambda_1 \leq \lambda_2 \leq 1$ and $1\leq \lambda_1' \leq \lambda_2' $, we can argue as in case (2).

\item If $\lambda_1 \leq \lambda_2 \leq 1$ and $\lambda_1' \leq \lambda_2' \leq 1$, this entails that the inequalities $I_2(\lambda_1, \lambda_2)$ and $I_2(\lambda_1', \lambda_2')$ are equivalent for all choices of parameters $p\in (0,\infty]$, $q\in (0,2]$, $\alpha\in \R$, $\beta \geq 0$ and $k\in \N_0$. This implies the equalities
$
\max\{\beta \lambda_1, (\beta-k)\lambda_1\} - (1+\lambda_1 + \lambda_2)\gamma\\
= \max\{\beta \lambda_1', (\beta-k)\lambda_1'\} - (1+\lambda_1' + \lambda_2')\gamma
$and 
$
(\beta-k) -  (1+\lambda_1 + \lambda_2)\gamma = (\beta-k) -  (1+\lambda_1' + \lambda_2')\gamma.
$
for all $p\in (0,\infty]$, $q\in (0,2]$, $\alpha\in \R$, $\beta \geq 0$ and $k\in \N_0$ (the lower and upper bounds of the inequalities for $\alpha$ have to coincide). The first equality implies for $q=2$ ($\gamma = 0$) and $\beta\neq 0$ with $\beta-k \neq 0$ first that $\lambda_1$ and $\lambda_1'$ have the same sign and then that they are actually equal. The second equality then implies for $q\neq 2$ ($\gamma\neq 0$) the equality $\lambda_2 = \lambda_2'$. 

\item If $\lambda_1 \leq \lambda_2 \leq 1$ and $\lambda_1' \leq 1 \leq \lambda_2' $, this entails that the inequalities $I_2(\lambda_1, \lambda_2)$ and $I_3(\lambda_1', \lambda_2')$ are equivalent for all choices of parameters $p\in (0,\infty]$, $q\in (0,2]$, $\alpha\in \R$, $\beta \geq 0$ and $k\in \N_0$. This implies the equalities
$
\max\{\beta \lambda_1, (\beta-k)\lambda_1\} - (1+\lambda_1 + \lambda_2)\gamma\\  
= \max\{\beta \lambda_1', (\beta-k)\lambda_1'\} - (1+\lambda_1' + \lambda_2')\gamma
$and $
(\beta-k) -  (1+\lambda_1 + \lambda_2)\gamma = (\beta-k)\lambda_2' -  (1+\lambda_1' + \lambda_2')\gamma.
$
for all $p\in (0,\infty]$, $q\in (0,2]$, $\alpha\in \R$, $\beta \geq 0$ and $k\in \N_0$ (the lower and upper bounds of the inequalities for $\alpha$ have to coincide). The second equality implies for $q=2$ ($\gamma = 0$) and $\beta-k=1 \neq 0$ the equality $\lambda_2' = 1 $. The first equality implies for $q=2$ ($\gamma = 0$) and $\beta- k>0$ first that $\lambda_1'$ and $\lambda_1$ have the same sign and in a next step that they are equal. The second equality lastly implies for some $q\neq 2$ ($\gamma \neq 0$) that $\lambda_2 = \lambda_2'$.

\item If $\lambda_1 \leq 1\leq  \lambda_2 $ and $1\leq \lambda_1' \leq \lambda_2' $, we can argue as in case (3).

\item If $\lambda_1 \leq 1\leq  \lambda_2 $ and $\lambda_1' \leq \lambda_2' \leq 1 $, we can argue as in case (6).

\item If $\lambda_1 \leq 1 \leq \lambda_2$ and $\lambda_1' \leq 1 \leq \lambda_2'$, this entails that the inequalities $I_3(\lambda_1, \lambda_2)$ and $I_3(\lambda_1', \lambda_2')$ are equivalent for all choices of parameters $p\in (0,\infty]$, $q\in (0,2]$, $\alpha\in \R$, $\beta \geq 0$ and $k\in \N_0$. This implies the equalities
$
\max\{\beta \lambda_1, (\beta-k)\lambda_1\} - (1+\lambda_1 + \lambda_2)\gamma
= \max\{\beta \lambda_1', (\beta-k)\lambda_1'\} - (1+\lambda_1' + \lambda_2')\gamma
$ and
$(\beta-k)\lambda_2 -  (1+\lambda_1 + \lambda_2)\gamma = (\beta-k)\lambda_2' -  (1+\lambda_1' + \lambda_2')\gamma.$
for all $p\in (0,\infty]$, $q\in (0,2]$, $\alpha\in \R$, $\beta \geq 0$ and $k\in \N_0$ (the lower and upper bounds of the inequalities for $\alpha$ have to coincide). The second equality implies for $q=2$ ($\gamma = 0$) and $\beta-k=1 \neq 0$ the equality $\lambda_2' = \lambda_2 $. The first equality implies for $q=2$ ($\gamma = 0$) and $\beta- k>0$ first that $\lambda_1'$ and $\lambda_1$ have the same sign and in a next step that they are equal.
\end{enumerate}
\end{proof}

For Toeplitz shearlet groups, this implies the following.

\begin{corollary}
For $\delta, \delta' \in \R$, the groups $H^{\delta}$ and $H^{\delta'}$ have the same embedding behavior if and only if $\delta = \delta'$.
\end{corollary}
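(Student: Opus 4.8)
The plan is to reduce the statement about Toeplitz groups to the already-established classification of standard shearlet groups in Theorem \ref{Cor:SameEmbeddingStandard}, and then carry out an elementary analysis of an unordered two-element set equality. No new computations of the type in Subsection \ref{subsec:StandardShearletCase} are needed.

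First I would invoke Corollary \ref{Cor:EmbeddingBehavior1}, which tells us that $H^{\delta}$ has the same embedding behavior as the standard shearlet group $H^{1-\delta, 1-2\delta}$, and likewise that $H^{\delta'}$ has the same embedding behavior as $H^{1-\delta', 1-2\delta'}$. Since having the same embedding behavior is an equivalence relation (Remark \ref{rem:embedding_1}), the groups $H^{\delta}$ and $H^{\delta'}$ share the same embedding behavior if and only if $H^{1-\delta, 1-2\delta}$ and $H^{1-\delta', 1-2\delta'}$ do. Next I would apply Theorem \ref{Cor:SameEmbeddingStandard} to this pair of standard shearlet groups: they have the same embedding behavior if and only if $\{1-\delta, 1-2\delta\} = \{1-\delta', 1-2\delta'\}$ as unordered sets. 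It thus remains to show that this set equality is equivalent to $\delta = \delta'$.

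The final step is the elementary set-matching argument, and it is the only place where any (minor) care is needed. There are two ways the two-element sets can be identified. The parallel matching $1-\delta = 1-\delta'$ and $1-2\delta = 1-2\delta'$ gives $\delta = \delta'$ immediately. The crossed matching $1-\delta = 1-2\delta'$ and $1-2\delta = 1-\delta'$ yields $\delta = 2\delta'$ together with $\delta' = 2\delta$; substituting one relation into the other forces $\delta = 4\delta$, hence $\delta = 0$ and then $\delta' = 0$, so again $\delta = \delta'$. Therefore the set equality holds precisely when $\delta = \delta'$. The converse direction is trivial, since $\delta = \delta'$ makes the two groups identical and hence trivially of the same embedding behavior.

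The point I expect to require the most attention — though it is hardly an obstacle — is verifying that the crossed matching cannot secretly produce distinct Toeplitz parameters with coinciding embedding behavior: one must observe that it collapses to the single solution $\delta = \delta' = 0$ rather than to a one-parameter family. With that observation in place, the equivalence $\{1-\delta, 1-2\delta\} = \{1-\delta', 1-2\delta'\} \Longleftrightarrow \delta = \delta'$ follows, and chaining it through Corollary \ref{Cor:EmbeddingBehavior1} and Theorem \ref{Cor:SameEmbeddingStandard} completes the proof.
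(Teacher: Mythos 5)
Your proposal is correct and follows essentially the same route as the paper's own proof: reduce to standard shearlet groups via Corollary \ref{Cor:EmbeddingBehavior1}, apply Theorem \ref{Cor:SameEmbeddingStandard} to obtain the set equality $\{1-\delta, 1-2\delta\} = \{1-\delta', 1-2\delta'\}$, and resolve the two matchings, with the crossed one collapsing to $\delta = \delta' = 0$. Your write-up is slightly more explicit than the paper's (invoking transitivity of the equivalence relation and spelling out the crossed-matching algebra), but the argument is the same.
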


\begin{proof}
The group $H^{\delta}$ has the same embedding behavior as the group $H^{1-\delta, 1-2\delta}$ and $H^{\delta'}$ has the same embedding behavior as $H^{1-\delta', 1-2\delta'}$, according to Corollary \ref{Cor:EmbeddingBehavior1}. Hence, Corollary \ref{Cor:SameEmbeddingStandard} implies that $H^{\delta}$ and $H^{\delta'}$ have the same embedding behavior if and only if $\{1-\delta, 1-2\delta\} = \{1-\delta', 1-2\delta'\}$. If $1-\delta = 1-\delta'$, then $\delta= \delta'$ and if $1-\delta = 1-2\delta'$, then also $1-2\delta= 1-\delta'$, which implies $\delta=\delta'=0$. 
\end{proof}
\subsection{Influence of the group on the embedding behavior}
In the last section of this chapter, we apply the results in Section \ref{section:EmbeddingStandardShearlet} and Section \ref{section:EmbeddingToeplitzShearlet} to investigate which groups allow embeddings into Sobolev spaces of higher smoothness and what the influence of the different parameters on the existence of such an embedding is. Corollary \ref{Cor:EmbeddingBehavior1} allows us to focus on standard shearlet groups with $\lambda_1 \leq \lambda_2$.

The next result is an immediate consequence of Theorem \ref{Cor:EmbeddingStandardLong}.

\begin{corollary} \label{cor:embedding_2}
Let $p \in (0,\infty]$, $q \in (0,2]$, $k\in \N_0$, $\beta \geq 0$ with $p\leq q$ and $\lambda_1 \leq \lambda_2$.
Necessary and sufficient for the existence of $\alpha \in \R$ such that the embedding 
$\Co(\mathrm{L}^{p}_{v^{(\alpha, \beta)}}(\R^3\rtimes H^{\lambda_1, \lambda_2}))\hookrightarrow W^{k,q}(\R^3)$
holds is

\begin{enumerate}[i)]
	\item the inequality $\beta \leq (\beta-k)\lambda_2$ if $1\leq \lambda_1 \leq \lambda_2$,
    
	\item the inequality $ \max\{\beta \lambda_1, (\beta-k) \lambda_1\} \leq \beta-k$ if $\lambda_1 \leq \lambda_2 \leq 1$,
    
    \item   the inequality  $\max\{\beta \lambda_1, (\beta-k) \lambda_1\}\leq (\beta-k)\lambda_2$ if $ \lambda_1 \leq 1 \leq \lambda_2$.
	\end{enumerate}
\end{corollary}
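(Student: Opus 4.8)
The plan is to read the result straight off the characterization in Theorem \ref{Cor:EmbeddingStandardLong}, exploiting the fact that the free parameter $\alpha$ enters that characterization only through a single affine expression. First I would record the relevant specializations. Writing the coorbit space as $\Co(\mathrm{L}^{p}_{v^{(\alpha,\beta)}})$ corresponds to the choice $r=p$ in the mixed-norm notation, so that $\gamma=\tfrac12-\tfrac1q$. Next I would verify that $q^\triangledown=q$ throughout the range $q\in(0,2]$: for $q<1$ one has $q'=\infty$, while for $1\le q\le 2$ one has $q'\ge 2\ge q$, so in every case the minimum defining $q^\triangledown$ equals $q$. Combined with the standing hypothesis $p\le q$, this yields $r=p\le q=q^\triangledown$, which places us squarely in case i) (the regime $r\le q^\triangledown$) of Theorem \ref{Cor:EmbeddingStandardLong}.

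In that regime the theorem asserts that, for each of the three orderings of $\lambda_1,\lambda_2$ relative to $1$, the embedding holds if and only if $p\le q$ and the quantity $\alpha+(1+\lambda_1+\lambda_2)\gamma$ lies in a closed interval $[L,U]$ whose endpoints $L,U$ depend only on $\beta,k,\lambda_1,\lambda_2,\gamma$ and not on $\alpha$. The key observation is that, with $q$ (hence $\gamma$ and $1+\lambda_1+\lambda_2$) fixed, the affine expression $\alpha+(1+\lambda_1+\lambda_2)\gamma$ sweeps out all of $\R$ as $\alpha$ ranges over $\R$. Consequently an admissible $\alpha$ exists precisely when the interval is nonempty, i.e. when $L\le U$.

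It then remains only to transcribe the inequality $L\le U$ in each case. For $1\le\lambda_1\le\lambda_2$ this reads $\beta\le(\beta-k)\lambda_2$; for $\lambda_1\le\lambda_2\le 1$ it reads $\max\{\beta\lambda_1,(\beta-k)\lambda_1\}\le\beta-k$; and for $\lambda_1\le 1\le\lambda_2$ it reads $\max\{\beta\lambda_1,(\beta-k)\lambda_1\}\le(\beta-k)\lambda_2$, matching the three stated conditions verbatim. There is no genuine obstacle: the only point requiring care is the reduction of the existence of $\alpha$ to the nonemptiness of an interval, which hinges on the fact—visible directly in the inequalities of Theorem \ref{Cor:EmbeddingStandardLong}—that $\alpha$ appears solely inside the sandwiching expression and with full range $\R$.
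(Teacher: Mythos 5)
Your proposal is correct and follows essentially the same route as the paper: both reduce the statement to the characterization in Theorem \ref{Cor:EmbeddingStandardLong} (after noting $r=p\le q=q^\triangledown$ and $\gamma=\tfrac12-\tfrac1q$) and observe that an admissible $\alpha$ exists precisely when the sandwiching interval $[L,U]$ is nonempty. The only cosmetic difference is that the paper exhibits an explicit witness, namely $\alpha$ chosen so that $\alpha+(1+\lambda_1+\lambda_2)\gamma$ equals the upper endpoint $U$, whereas you argue via surjectivity of $\alpha\mapsto\alpha+(1+\lambda_1+\lambda_2)\gamma$; the two are interchangeable.
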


\begin{proof}
If the embedding holds, then the respective inequality has to hold, according to Theorem \ref{Cor:EmbeddingStandardLong}. On the other hand, if the inequalities in $(1)$, $(2)$, or $(3)$ hold, then the choice 
$\alpha:= (\beta - k)\lambda_2 - (1+\lambda_1 + \lambda_2)(1/2-1/q)$
ensures the embedding in case $(1)$ and $(3)$ and the choice 
$\alpha:= \beta - k - (1+\lambda_1 + \lambda_2)(1/2-1/q)$
ensures the embedding in case $(2)$, according to Theorem \ref{Cor:EmbeddingStandardLong}.
\end{proof}

\begin{remark} \label{rem:embedding_2}
\begin{enumerate}[i)]
\item If $\beta= k = 0$, then all these inequalities are satisfied, which means that for arbitrary parameters with $p\leq q$, there exists always one $\alpha$ such that the embedding
$
\Co(\mathrm{L}^{p}_{v^{(\alpha, 0)}}(\R^3\rtimes H^{\lambda_1, \lambda_2}))\hookrightarrow L^q(\R^3)
$
holds.

\item Since none of these inequalities hold for $\lambda_1 = \lambda_2 = 1$ and $k>0$, we infer that coorbit spaces associated to shearlet groups with isotropic scaling matrices never allow an embedding into Sobolev spaces with nontrivial smoothness requirement ($k>0$).

\item If $\beta = k>0$, then no embedding exists for $\lambda_1>0$, but for every $ \lambda_1 \leq 0$.

\item If $\beta- k > 0$, then we can always choose in case $(1)$ a sufficiently large $\lambda_2$ such that the inequality is fulfilled, in case (2) a sufficiently small $\lambda_1>0$ or $\lambda_1 \leq 0$ such that the inequality is fulfilled and in case (3) a sufficiently large $\lambda_2$ or a sufficiently small $\lambda_1>0$ or $\lambda_1 \leq 0$ such that the inequality holds. To phrase this more informally, it is easier to find smoother functions in the space $\Co(\mathrm{L}^{p}_{v^{(\alpha, \beta)}}(\R^3\rtimes H^{\lambda_1, \lambda_2}))$ the more different the pair $(\lambda_1, \lambda_2)$ is from $(1,1)$. 
\end{enumerate}
\end{remark}

Next, we consider the case $p=q$. An application of Theorem \ref{Cor:EmbeddingStandardLong} for this choice of parameters leads to the next result.

\begin{corollary} \label{cor:embedding_3}
Let $p \in (0,2]$, $k\in \N_0$, $\alpha \in \R$, $\beta \geq 0$ and $\lambda_1 \leq \lambda_2$.
Necessary and sufficient for the embedding 
$\Co(\mathrm{L}^{p}_{v^{(\alpha, \beta)}}(\R^3\rtimes H^{\lambda_1, \lambda_2}))\hookrightarrow W^{k,p}(\R^3)$
are
\begin{enumerate}[i)]
	\item the inequalities
    $\beta \leq \alpha + (1+\lambda_1 + \lambda_2)\left(\frac{1}{2}-\frac{1}{p}\right) \leq (\beta-k)\lambda_2$
    if $1\leq \lambda_1 \leq \lambda_2$,
    
	\item the inequalities
    $\max\{\beta \lambda_1, (\beta-k) \lambda_1\} \leq \alpha + (1+\lambda_1 + \lambda_2)\left(\frac{1}{2}-\frac{1}{p}\right)\leq \beta-k$
    if $ \lambda_1 \leq \lambda_2 \leq 1$,
    
    \item   the inequalities
     $       \max\{\beta \lambda_1, (\beta-k) \lambda_1\}\leq \alpha + (1+\lambda_1 + \lambda_2)\left(\frac{1}{2}-\frac{1}{p}\right)\leq (\beta-k)\lambda_2$
    if $ \lambda_1 \leq 1 \leq \lambda_2$.
	\end{enumerate}
\end{corollary}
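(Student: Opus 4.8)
The plan is to apply Theorem \ref{Cor:EmbeddingStandardLong} with the specialization $q = r = p$, which is exactly the choice $p = r$ fixed at the start of Section \ref{Consequences} combined with the target exponent $q = p$. First I would record that under these substitutions the quantity $\gamma = \frac{1}{2} - \frac{1}{r} + \frac{1}{p} - \frac{1}{q}$ collapses to $\gamma = \frac{1}{2} - \frac{1}{p}$, so that the combination $\alpha + (1+\lambda_1+\lambda_2)\gamma$ governing the inequalities in Theorem \ref{Cor:EmbeddingStandardLong} coincides verbatim with the expression $\alpha + (1+\lambda_1+\lambda_2)\left(\frac{1}{2}-\frac{1}{p}\right)$ occurring in the statement to be proved.

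Next I would determine which branch of Theorem \ref{Cor:EmbeddingStandardLong} is active. Since $p \in (0,2]$, one checks that $q^\triangledown = \min\{p,p'\} = p$: for $p \in (0,1)$ we have $p' = \infty$, and for $p \in [1,2]$ we have $p' \geq 2 \geq p$, so the minimum is $p$ in either case. Consequently $r = p = q^\triangledown$, which places us squarely in case i) of Theorem \ref{Cor:EmbeddingStandardLong} (the regime $r \leq q^\triangledown$) and rules out case ii). The remaining side condition $p \leq q$ demanded by that theorem reduces to $p \leq p$ and is therefore automatically satisfied, so it imposes no further restriction.

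Finally, I would transcribe the three subcases of case i), which are indexed by the position of $\lambda_1, \lambda_2$ relative to $1$. Inserting $\gamma = \frac{1}{2} - \frac{1}{p}$ into the chains $\beta \leq \alpha + (1+\lambda_1+\lambda_2)\gamma \leq (\beta-k)\lambda_2$ for $1 \leq \lambda_1 \leq \lambda_2$, $\max\{\beta\lambda_1,(\beta-k)\lambda_1\} \leq \alpha + (1+\lambda_1+\lambda_2)\gamma \leq \beta - k$ for $\lambda_1 \leq \lambda_2 \leq 1$, and $\max\{\beta\lambda_1,(\beta-k)\lambda_1\} \leq \alpha + (1+\lambda_1+\lambda_2)\gamma \leq (\beta-k)\lambda_2$ for $\lambda_1 \leq 1 \leq \lambda_2$ reproduces precisely the three displayed systems of inequalities. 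Because $p \in (0,2]$ lies in the range $q \in (0,2]\cup\{\infty\}$ for which Theorem \ref{Cor:EmbeddingStandardLong} yields a genuine equivalence rather than mere sufficiency, these conditions are simultaneously necessary and sufficient, which is the assertion.

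I do not anticipate any real obstacle in this argument: the corollary is essentially a readout of the previously established equivalence under a single parameter identification. The one point deserving explicit care is the elementary verification that $q^\triangledown = p$ throughout $p \in (0,2]$, since it is precisely this fact that guarantees we remain in the equivalence regime of Theorem \ref{Cor:EmbeddingStandardLong}, and not in the less favorable case $q \in (2,\infty)$ where only sufficiency together with a weakened necessity (with $q^\triangledown$ replaced by $q$) would be available.
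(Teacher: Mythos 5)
Your proposal is correct and follows exactly the route the paper takes: the paper derives Corollary \ref{cor:embedding_3} simply by applying Theorem \ref{Cor:EmbeddingStandardLong} with $p=r=q$, in which case $\gamma = \frac{1}{2}-\frac{1}{p}$, $q^\triangledown = p$ places one in the regime $r \le q^\triangledown$ of case i), and $p\le q$ is automatic. Your explicit check that $q^\triangledown = p$ on all of $(0,2]$ (splitting $p<1$ and $1\le p\le 2$) is the one detail the paper leaves implicit, and you have it right.
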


\begin{remark}\label{rem:SmallerSmoother}
\begin{enumerate}[i)]
\item In the first case, the right inequality leads to the condition
\begin{align*}
k \leq \beta - \left( \alpha + (1+\lambda_1 + \lambda_2)\left(\frac{1}{2} - \frac{1}{p}\right)\right)\lambda_2^\inv
\end{align*}
for $k$. If we fix all parameters except $p$ and $k$, this shows that for smaller $p$ this upper bound for $k$ increases, which means that the functions in $\Co(L^{p}_{v^{(\alpha, \beta)}}(\R^3\rtimes H^{\lambda_1, \lambda_2}))$ are smoother for smaller $p$.

\item In the second case with $\lambda_1>0$, the right inequality leads to the similar condition
\begin{align*}
k \leq \beta - \left( \alpha + (1+\lambda_1 + \lambda_2)\left(\frac{1}{2} - \frac{1}{p}\right)\right)
\end{align*}
for $k$. If we fix all parameters except $p$ and $k$, this shows that for smaller $p$ this upper bound for $k$ increases, which means that in this case the functions in $\Co(L^{p}_{v^{(\alpha, \beta)}}(\R^3\rtimes H^{\lambda_1, \lambda_2}))$ are smoother for smaller $p$ as well.

\item In the third case with $\lambda_1>0$, the right inequality leads to the same condition
\begin{align*}
k \leq \beta - \left( \alpha + (1+\lambda_1 + \lambda_2)\left(\frac{1}{2} - \frac{1}{p}\right)\right)\lambda_2^\inv
\end{align*}
for $k$. If we fix all parameters except $p$ and $k$, this shows that for smaller $p$ this upper bound for $k$ increases, which means that in this case the functions in $\Co(L^{p}_{v^{(\alpha, \beta)}}(\R^3\rtimes H^{\lambda_1, \lambda_2}))$ are smoother for smaller $p$ as well.
\end{enumerate}
\end{remark}

\section{Conclusion}
This paper provides a case study how the decomposition space approach can be systematically employed to study embedding properties of coorbit spaces. The complexity of the characterizations we obtained also highlight that even though the approach developed by Voigtlaender often reduces complex questions of containment between function spaces into a mere combinatorial problem, understanding the latter in more conceptual terms remains a challenge. 


\nocite{*}
\bibliographystyle{plain}
\bibliography{sample}
\end{document}